\documentclass[12pt]{article}
\usepackage[margin=1.23in]{geometry}
\usepackage{amssymb, amsmath, amsthm, amsfonts}
\usepackage{stackengine,scalerel,calc}
\stackMath
\newcommand\Rwh[1]{
	\savestack{\tmpbox}{\stretchto{\scaleto{\scalerel*[\widthof{\ensuremath{#1}}]{\kern-.6pt\bigwedge\kern-.6pt}{\rule[-\textheight]{1ex}{\textheight}}}{\textheight}}{.5ex}}
	\stackon[1pt]{#1}{\tmpbox}
}

\DeclareMathOperator\supp{\mathrm{supp}}
\DeclareMathOperator{\graph}{graph}

\newtheorem{theorem}{Theorem}[section]
\newtheorem{lemma}[theorem]{Lemma}
\newtheorem{proposition}[theorem]{Proposition}
\newtheorem{corollary}[theorem]{Corollary}
\theoremstyle{definition}
\newtheorem{definition}[theorem]{Definition}

\newtheorem{example}[theorem]{Example}
\theoremstyle{remark}
\newtheorem{remark}[theorem]{Remark}
\usepackage{tikz-cd}
\tikzcdset{
	every label/.append style={font=\small}
}
\newcommand\tto{\rightrightarrows}
\newcommand\nnh{}
\usepackage{enumitem}
\setlist[itemize]{nosep}
\setlist[enumerate]{nosep}
\newcommand\shH{\mathscr{H}}
\newcommand\fg{\mathfrak{g}}

\DeclareMathOperator\GL{GL}

\usepackage{mathrsfs}

\newcommand\pdt{\partial_t}
\newcommand\pds{\partial_s}
\newcommand\drt{\delta^R_t}
\newcommand\drs{\delta^R_s}

\newcommand\shA{\mathscr{A}}
\newcommand\shcinf{\mathscr{C}^\infty}
\newcommand\shTM{{\mathscr{T}_M}}

\newcommand\shE{\mathscr{E}}
\DeclareMathOperator{\Hom}{Hom}
\newcommand\ddR{d_{\mathrm{dR}}}
\newcommand\ddRs{d_{\mathrm{s,dR}}}
\newcommand\wgrisv{{\Omega^\bullet_{\G,\mathrm{risv}}}}
\newcommand\wgrisvk{{\Omega^k_{\G,\mathrm{risv}}}}
\newcommand\ora[1]{\overrightarrow{#1}}
\newcommand\shR{\mathscr{R}_{\G}}
\newcommand\rld{\delta^R}

\newcommand\wt{\widetilde}
\newcommand\G{{\mathcal{G}}}

\newcommand\R{\mathbb{R}}
\newcommand\N{\mathbb{N}}

\newcommand\src{\mathbf{s}}
\newcommand\tgt{\mathbf{t}}
\newcommand\one{\mathbf{1}}
\newcommand\anch{{\mathbf{a}}}
\newcommand\inv{\mathbf{i}}
\newcommand\mult{\mathbf{m}}
\newcommand\mset[1]{{#1}^{(2)}}
\DeclareMathOperator{\id}{id}

\DeclareMathOperator\ev{ev}

\newcommand\sM{\mathsf{M}}
\newcommand\sN{\mathsf{N}}

\newcommand\sS{\mathsf{S}}

\newcommand\sC{\mathsf{C}}
\newcommand\sG{\mathsf{G}}
\newcommand\sE{\mathsf{E}}
\newcommand\sF{\mathsf{F}}
\newcommand\sH{\mathsf{H}}

\newcommand{\wh}[1]{\widehat{#1}}
\newcommand{\cH}{\mathcal{H}}

\DeclareMathOperator\pr{pr}
\newcommand{\Lie}{\ensuremath{\mathbf{L}}}
\newcommand\LG{{\Lie(\G)}}
\newcommand{\LB}[1][\cdot \hspace{1pt} , \cdot]{\left[\hspace{1pt} #1 \hspace{1pt} \right]}

\usepackage{titlesec}
\titleformat{\section}
  {\normalfont\large\bfseries}{\thesection}{1em}{}
\titleformat{\subsection}
  {\normalfont\normalsize\bfseries}{\thesubsection}{1em}{}
\titleformat{\subsubsection}
  {\normalfont\normalsize\itshape}{\thesubsubsection}{1em}{}

\usepackage{newtxtext}
\usepackage{newtxmath}
\usepackage[scr=rsfso]{mathalpha}

\usepackage{xurl}

\usepackage[
backend=biber,
style=alphabetic,
maxbibnames=99,
maxcitenames=99,
giveninits=false
]{biblatex}

\AtBeginBibliography{%
\sloppy
\setlength{\emergencystretch}{3em}%
}

\renewbibmacro{in:}{%
\ifentrytype{article}
{}
{\printtext{\bibstring{in}\intitlepunct}}}

\usepackage{hyperref}
\hypersetup{breaklinks=true}

\begin{document}

\begin{center}
    {\LARGE  A category of locally convex Lie algebroids}\\[4mm]
    {\large Ahmed Gamal Shaltut\footnote{
    {\tt ahmed.shaltut@queensu.ca}\\Dept. of Mathematics and Statistics, Queen's University at Kingston. }}\\[1cm]
\end{center}

\begin{abstract}
We study first-order locally convex Lie algebroids in the setting of Bastiani calculus. The first-order condition is automatic in finite dimensions, but is an additional regularity hypothesis for general locally convex vector bundles. Under this condition, we define sheaves of scalar-valued and vector-valued Lie algebroid forms as fiberwise continuous alternating maps with smooth local representatives. We define morphisms by requiring the induced pullback on inverse-image sheaves of scalar-valued forms to commute with the Lie algebroid differentials, and prove that first-order locally convex Lie algebroids form a category. We also study representations and the induced cohomology sheaves. We show that locally convex Lie groupoids have first-order Lie algebroids and that Lie groupoid morphisms induce morphisms in this category. As applications, we prove that the current algebroid associated with a first-order Banach Lie algebroid is a first-order Fréchet Lie algebroid, and we prove a Lie II theorem in the Banach setting: first-order morphisms between the Lie algebroids of Banach-Lie groupoids under source-connected and source-simply connected hypotheses integrate to unique Lie groupoid morphisms.
\end{abstract}

\section*{Introduction}

Lie algebroids were introduced by Pradines as the infinitesimal objects associated with differentiable groupoids \cite{Pradines1967Lie}. They generalize both Lie algebras and tangent bundles. The finite-dimensional theory is now standard; we refer to \cite{Mackenzie1987LieGroupoids,Mackenzie2005GeneralTheory,MoerdijkMrcun2003Foliations} for the general theory.
In finite dimensions, a Lie algebroid $A\to M$ has a well known differential calculus. The space of $k$-forms is $\Omega_A^k(M)=\Gamma(\wedge^k A^*)$, and the anchor and bracket define the Chevalley-Eilenberg differential $d_A$. This complex reduces to the de Rham complex when $A=TM$, and to the usual Lie algebra cohomology complex when $M$ is a singleton. The exterior differential, Lie derivatives, contractions, and Schouten-Nijenhuis brackets in the Lie algebroid setting are classical; see, for example, \cite{Koszul1985Schouten,Mackenzie2005GeneralTheory,Marle2008Calculus}. Equivalently, a finite-dimensional Lie algebroid structure may be described as a degree-one differential on the graded algebra $\Gamma(\wedge^\bullet A^*)$, or as a homological vector field on the graded manifold $A[1]$ \cite{Vaintrob1997Homological}.

Lie algebroid morphisms also have several equivalent descriptions in finite dimensions.One may require compatibility with anchors and brackets on projectable sections, or equivalently require the pullback of Lie algebroid forms to commute with the Lie algebroid differentials. This point of view is standard in finite-dimensional Lie theory; see, for example, \cite{HigginsMackenzie1990Algebraic,Mackenzie2005GeneralTheory,Vaintrob1997Homological}. For Banach Lie algebroids, morphisms defined by pullback of forms were used in \cite{Anastasiei2011BanachLieAlgebroids}. Representations of Lie algebroids, namely flat $A$-connections on vector bundles, give cohomology complexes with coefficients. This is also classical in finite dimensions and is related to characteristic classes, modular classes, van Est maps, and representations up to homotopy; see \cite{Kubarski1991ChernWeil,Fernandes2002Holonomy,EvensLuWeinstein1999Transverse,Crainic2003Cohomology,AbadCrainic2012Representations}.

The locally convex Bastiani setting is more delicate. A locally convex vector bundle need not behave like a finite-dimensional vector bundle with a finite local frame. Moreover, one should not assume that exterior dual bundles or bundles of continuous alternating multilinear maps carry a useful canonical smooth vector bundle structure. For this reason, unlike the finite-dimensional case, one can not simply use $\Omega_A^k(M)=\Gamma(\wedge^k A^*)$ in the general locally convex setting. Instead, we define Lie algebroid forms as sheaves of fiberwise continuous alternating maps with smooth local representatives. This avoids choosing topologies on spaces of continuous multilinear maps.

The main technical point is a local regularity condition, introduced in
Definition~\ref{def:liealgebroid}, which we call the first-order condition.
This condition is automatic for finite-dimensional Lie algebroids. Indeed, after
choosing a local frame, the bracket is determined by smooth local structure
functions, and the bracket of two arbitrary sections is a first-order expression
in their local coefficients. For general locally convex vector bundles, this
property is no longer automatic. We therefore impose it as a hypothesis. It is
the regularity condition used here to define scalar-valued and vector-valued Lie
algebroid forms as sheaves, and to define their differentials, without using
cotangent bundles or exterior dual bundles.
This condition is also related to a known pathology in infinite-dimensional
Poisson geometry. In finite dimensions, a Poisson bracket is a bidifferential
operator of first order, and a Lie algebroid bracket is locally determined by the
first jets of its arguments. In Banach geometry, analogous first-order behavior
need not follow automatically. Belti{\c t}{\u a}, Goli{\'n}ski, and Tumpach
constructed queer Poisson brackets whose values may depend on higher derivatives
of the functions
\cite{BeltitaGolinskiTumpach2018QueerPoisson}. More recently,
Goli{\'n}ski and Jakimowicz discussed the analogous question for Banach Lie
algebroids \cite{GolinskiJakimowicz2025Predual}. Our first-order condition rules
out such higher-order behavior, but it is slightly stronger than first-jet
dependence alone: it requires an explicit local formula with a smooth fiberwise
bilinear term in the chosen trivialization.

In this paper, groupoid arrow spaces are allowed to be non-Hausdorff, but the bases and source fibers are assumed Hausdorff. All arguments involving vector fields and forms are local, and all uniqueness arguments later in the paper are made inside Hausdorff source fibers.

Current groupoids and current algebroids give an important class of infinite-dimensional examples. Lie groupoids of smooth mappings into a Lie groupoid were studied in \cite{AmiriGloecknerSchmeding2020CurrentGroupoids}. In the present paper we prove a corresponding first-order statement for current algebroids associated with first-order Banach Lie algebroids $A\to M$. This gives Fréchet Lie algebroids $C^\infty(K,A)$ over mapping manifolds $C^\infty(K,M)$, when
$K$ is a compact manifold and $M$ and $A$ admit local additions.

Finally, we prove a Banach Lie II theorem for Lie groupoids. The finite-dimensional integrability theory of Lie algebroids is subtle: not every Lie algebroid integrates to a Lie groupoid. The integrability problem was solved by Crainic and Fernandes \cite{CrainicFernandes2003Integrability}. Their path-space methods also provide the model for Lie II statements: 
in the finite-dimensional source-simply connected setting, Lie algebroid morphisms integrate uniquely to Lie groupoid morphisms.
Related integration questions for comorphisms are studied in \cite{CattaneoDherinWeinstein2013Comorphisms}. 
In the Banach setting, Banach Lie algebroids were studied in \cite{Anastasiei2011BanachLieAlgebroids}, 
related almost Lie structures on anchored Banach bundles
in~\cite{CabauPelletier2012AlmostLie},
and Banach-Lie groupoids in \cite{BeltitaGolinskiJakimowiczPelletier2019}. 

Our Banach Lie II theorem is not an integrability theorem for arbitrary Banach Lie algebroids.
The proof follows the path-space idea of \cite{CrainicFernandes2003Integrability}. Admissible algebroid paths are reconstructed by solving right-logarithmic ordinary differential equations. All ODE arguments are carried out in Banach charts, and all uniqueness arguments are made inside Hausdorff source fibers.

We now summarize the structure of the paper.
In Section~\ref{sec:firstorderLiealgbd}, we introduce first-order locally convex Lie algebroids. We prove that the Lie algebroid of a locally convex Lie groupoid is first-order. We then define scalar-valued and vector-valued Lie algebroid forms as sheaves. In the scalar-valued case this gives a sheaf of differential graded algebras.

In Section~\ref{sec:liealgbdmorphism}, we define first-order Lie algebroid morphisms by pullback of forms. More precisely, a smooth vector bundle map $F\colon A\to B$ is a morphism if the induced pullback 
$f^{-1}\Omega_B^\bullet \to \Omega^\bullet_A$
is a morphism of sheaves of dg algebras. We prove that compositions of such morphisms are again morphisms, so first-order locally convex Lie algebroids form a category. We also prove that morphisms of locally convex Lie groupoids induce morphisms of their Lie algebroids. The proof uses the sheaf of right-invariant source-vertical forms on a Lie groupoid. We then show that the differential-graded definition implies the usual anchor compatibility and bracket compatibility for projectable sections. If the base map is a diffeomorphism, the converse also holds. Finally, we prove that representations pull back along Lie algebroid morphisms, compatibly with the associated differentials.

In Section~\ref{sec:Lie-2nd-Banach}, we restrict to Banach Lie groupoids. The base manifolds are Hausdorff Banach manifolds. The arrow manifolds are allowed to be non-Hausdorff Banach manifolds, but the source fibers are assumed Hausdorff. Let $\G\rightrightarrows M$ and $\cH\rightrightarrows N$ be such groupoids, and assume that $\G$ is source-connected and source-simply connected. We prove that every Lie algebroid morphism $\Phi\colon \Lie(\G)\to \Lie(\cH)$ integrates to a unique Lie groupoid morphism $F\colon \G\to \cH$ with $\Lie(F)=\Phi$.

\noindent\textit{Acknowledgment.} The author  is grateful to Andrew D. Lewis, Helge Gl\"ockner, and Mike Roth for their comments.

\section{Preliminaries}
We use Bastiani calculus throughout this paper. All locally convex model spaces are Hausdorff. Unless stated otherwise, manifolds are Hausdorff. The only non-Hausdorff manifolds considered here are arrow spaces of Lie groupoids, and these are assumed to have Hausdorff bases and Hausdorff source fibers. 
All definitions of vector fields, tangent bundles, submanifolds, and forms are local and therefore remain meaningful for non-Hausdorff manifolds.
For background on infinite-dimensional differential geometry, Bastiani calculus, submersions and immersions, and mapping manifolds, we refer to \cite{GloecknerNeeb2026InfiniteDimensional,Gloeckner2015Submersions,Schmeding2022IntroductionInfiniteDimensional}.

For a topological space $M$, a chart  about $x \in M$ is a homeomorphism $\phi\colon U_\phi\to V_\phi$, where  $U_\phi$
is an open neighborhood of $x$ and  $V_\phi$ is an open subset of a locally convex space $\sM_\phi$. We shall always adapt this notation for charts unless defined otherwise. A smooth atlas for $M$ is a collection of charts covering $M$ whose transition maps are smooth in the Bastiani sense. 
A \emph{smooth manifold} is a topological space equipped with a maximal smooth atlas. On connected components, the modeling spaces are topologically isomorphic, so we may speak of a single model space up to a topological isomorphism.

Suppose that $M$ is a  smooth manifold.
Consider a vector field on $X\in\Gamma(TM)$ and 
a map $f\in C^\infty(U,\sF) $ where $U\subseteq M$ is open and $\sF$ is a locally convex space. 
We denote $X.f:= \pr_2\circ  Tf\circ X\in C^\infty(U,\sF)$.
Suppose that we have chart $\phi$ with $U_\phi=U$. We denote $X^\phi:=\pr_2\circ T\phi\circ X\circ \phi^{-1}\colon V_\phi\to \sM_{\phi}$. In this case, the map $X.f$ is given locally by $y\mapsto d (f\circ \phi^{-1})(y;X^\phi(y))$.

A subset $S \subseteq M$ is a  \emph{submanifold} if it is locally modeled on a closed subspace of the modeling space: for every $x \in S$, there is a chart $\phi$ such that $\phi(U_\phi \cap S) = V_\phi \cap \sS_\phi$ for some closed subspace $\sS_\phi \subseteq \sM_\phi$. If each $\sS_\phi$ is complemented in $\sM_\phi$, then $S$ is a \emph{split submanifold}. These charts define the manifold structure on $S$ with the subspace topology.

A smooth map $f \colon M \to N$ is a \emph{submersion} if, for every $x \in M$, there exist charts $\phi$ about $x$ and $\psi$ about $f(x)$ such that
$f(U_\phi)\subseteq U_\psi$ and 
 $\psi \circ f \circ \phi^{-1}$ restricts to a projection $p \colon \sM_\phi \cong \sN_\psi \times \sC \to \sN_\psi$. Equivalently, $\psi\circ f\circ \phi^{-1} = \pi|_{V_{\phi}}$ where $\pi\colon \sM_{\phi}\to \sN_{\psi}$ is a continuous linear surjective map  which admit  a continuous linear right inverse.
Similarly, $f$ is an \emph{immersion} if it locally looks like the inclusion of a complemented subspace.
That is, if $\psi\circ f\circ\phi^{-1} = \iota|_{V_{\phi}}$,
where $\iota \colon \sM_{\phi}\to \sN_{\psi}$ is a topological embedding 
onto a complemented subspace of $\sN_\psi$. In particular, an immersion restricts to an embedding locally about each point.
 An immersion that is also a topological embedding is called an \emph{embedding}.

A \emph{trivial vector bundle} is a product $M\times\sE$, 
where
 $M$ be a smooth manifold and $\sE$ is a locally convex space alongside the projection $\pr_1\colon M\times \sE\to M$.
Let $N\times \sF$ be another trivial  vector bundle. A \emph{trivial vector bundle morphism} is a smooth map $\Psi\colon M\times \sE\to N\times \sF$ which is given in terms of a pair of  maps $f\colon M\to N$ and  $\wh \Psi\colon M\times \sE\to \sF$  such that $\Psi(x,u) = (f(x), \wh \Psi(x,u))$  and $\Psi_x:=\wh \Psi(x,\cdot)$ is linear for all $x\in M$ and $u\in \sE$. 

The composition of two vector bundle morphisms is defined in
the obvious way. Let
$\Psi_i\colon M_i\times \sE_i\to M_{i+1}\times \sE_{i+1}$, $i=1,2$, be  vector bundle morphisms. The composition is given by
$$(\Psi_2\circ \Psi_1)(x,u) = ((f_2\circ f_1)(x), \wh{\Psi_2}(f_1(x), \wh{ \Psi_1}(x,u)))
$$
 for all $(x,u)\in M_1 \times \sE_1$. That is, $\Rwh{\Psi_2\circ \Psi_1} = \wh{\Psi_2} \circ ((f_1\circ \pr_1), \wh{\Psi_1})$ which is smooth and linear in the second argument (so that, fiberwise, $(\Psi_2\circ \Psi_1)_x = ({\Psi_2})_{f_1(x)}\circ (\Psi_1)_x$ is linear). 
 
 We say that $\Psi$ is a \emph{vector bundle isomorphism} if 
there exists a  vector bundle morphism $\Psi^{-1}\colon N\times \sF\to M\times \sE$ 
which is a smooth inverse for $\Psi$.
In particular, $f$ is a  diffeomorphism, 
$\wh{\Psi^{-1}}(f(x), \wh{\Psi}(x,u))  = u$ for all $ 
(x,u)\in M\times \sE$, 
and $\wh \Psi(f^{-1}(y), \wh{\Psi^{-1}}(y,w) ) = w$  for all $(y,w)\in N\times \sF$. 
That is, $\Psi^{-1}$ is can be written as
$$\Psi^{-1}(y, w) =  (f^{-1}(y), (\Psi_{f^{-1}(y)})^{-1}(w))\quad \forall (y,w)\in N\times\sF.$$

	A \emph{ vector bundle} is a  manifold $E$ together with
	a  surjective map $p\colon E\to M$ onto a  manifold $M$ such that,
	for each $x\in M$, the fiber $E_x:=p^{-1}(x)$
	is a locally convex space, and such that there
	exists a family of   diffeomorphisms (\emph{local trivializations}) $\{\phi\colon p^{-1}(U_\phi)\to U_\phi\times \sF_\phi\}$
	such that $\{\sF_\phi\}$ are locally convex spaces and $\{U_\phi\}$ is
	an open cover of $M$,
	 satisfying the following conditions: 
	\begin{enumerate}\itemsep0em
		\item
		 $\phi|_{p^{-1}(x)}\colon p^{-1}(x)\to \sF_\phi$ is a
		 topological isomorphism for all $x\in U_\phi$;
		\item $\pr_1\circ \phi = p|_{p^{-1}(U_\phi)}$, where $\pr_1\colon U_\phi\times \sF_\phi\to U_\phi$ is the projection onto $U_\phi$;
		\item \label{cond:overlap} for any two local trivialization $\phi$ and $\psi$,
		the \emph{transition map} $\psi\circ\phi^{-1}\colon (U_\phi\cap U_\psi)\times \sF_\phi
		\to (U_\phi\cap U_\psi)\times \sF_\psi$ is a   isomorphism between trivial vector bundles (hence $\sF_{\psi}\cong \sF_\phi$).
	\end{enumerate}

For $S\subseteq M,$ we denote $E|_S:= p^{-1}(S)$.

Let $\wt p\colon \wt E\to \wt M$ be another vector bundle. A \emph{vector bundle morphism} from $E\to \wt E$ is given by a pair of smooth maps $(\Phi,f)$ where $\Phi\colon E\to \wt E$ and $f\colon M\to \wt M$ satisfy: $\wt p\circ  \Phi = f\circ p$  and $\Phi_x\colon E_x\to \wt E_{f(x)}$ is linear. 
Composition of  vector bundle morphisms is a  vector bundle morphism.

Similarly to the finite-dimensional case, the local triviality condition ensures that $p$ is a submersion, the model spaces $\sF_\phi$ are (topologically) isomorphic on connected components, and the set of transition maps forms a \v{C}ech cocycle. One can alternatively define
a vector bundle as a set and use the manifold structure defined by the
system of local trivializations.

To relate the above definition to the standard one in the Banach setting, we use the following elementary lemma. We refer to ~\cite[Lemma~1.5.11]
{GloecknerNeeb2026InfiniteDimensional} for the general 
locally convex case and the proof.

We first recall a simple fact. Let $P$ be a topological space and let
$B\colon P\times \sF\to \sH$ be continuous, where $\sF,\sH$ are Banach spaces, and
suppose that $B(p,\cdot)$ is linear for every $p\in P$. Then
$B^\vee\colon P\to\mathcal L(\sF,\sH)$, $B^\vee(p)v:=B(p,v)$, is continuous.
Indeed, fix $p_0\in P$. The map
$(p,v)\mapsto B(p,v)-B(p_0,v)$ is continuous and vanishes at $(p_0,0)$.
Hence, for every $\varepsilon>0$, there are a neighbourhood $P_0$ of $p_0$
and $\delta>0$ such that
$\|B(p,w)-B(p_0,w)\|_{\sH}<\varepsilon\delta$ for $p\in P_0$ and
$\|w\|_{\sF}<\delta$. For $\|v\|_{\sF}\le 1$, put $w=(\delta/2)v$. By
linearity, $(\delta/2)\|B(p,v)-B(p_0,v)\|_{\sH}<\varepsilon\delta$, and
therefore $\|B^\vee(p)-B^\vee(p_0)\|_{\mathrm{op}}\le 2\varepsilon$. Thus
$B^\vee$ is continuous.

\begin{lemma}\label{lem:smooth-linear-expo}
Let $\sE$ be a locally convex space, let $\sF,\sH$ be Banach spaces, and let
$U\subseteq \sE$ be open. Let
$f\colon U\times \sF\to \sH$
be smooth in the Bastiani sense, and suppose that $f(x,\cdot)$ is linear for
every $x\in U$. Then
$f^\vee\colon U\to \mathcal L(\sF,\sH)$,
$f^\vee(x)(v):=f(x,v)$,
is smooth, where $\mathcal L(\sF,\sH)$ carries the operator norm topology. Moreover,
$d(f^\vee)(x;h)(v)=d_1f(x,v;h)$.
More generally,
$d^k(f^\vee)(x;h_1,\ldots,h_k)(v)
=
d_1^k f(x,v;h_1,\ldots,h_k)$.
\end{lemma}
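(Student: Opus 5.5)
We argue by induction on $k$, the key input being the continuity fact recalled just before the statement. That fact says a continuous map $B\colon P\times\sF\to\sH$, linear in the second variable, has continuous adjoint $B^\vee\colon P\to\mathcal L(\sF,\sH)$ for the operator norm. Since $f$ is Bastiani smooth, every iterated differential
$$d_1^k f\colon U\times\sF\times\sE^k\to\sH,\qquad (x,v,h_1,\ldots,h_k)\mapsto d_1^kf(x,v;h_1,\ldots,h_k),$$
is continuous. Here $d_1$ denotes differentiation in the $U$-variable only: it is the restriction of $d^kf$ to directions of the form $(h_i,0)$, and hence continuous. Moreover $d_1^kf(x,v;h)$ is linear in $v$, because difference quotients of maps linear in $v$ are linear in $v$ and pointwise limits preserve linearity. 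Applying the continuity fact with $P=U\times\sE^k$ shows that
$$G_k\colon U\times\sE^k\to\mathcal L(\sF,\sH),\qquad (x,h)\mapsto d_1^kf(x,\cdot\,;h),$$
is continuous. For $k=0$ this already gives continuity of $f^\vee$.

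\textbf{Step 1: the first derivative.} Fix $x\in U$ and $h\in\sE$, and choose $\varepsilon>0$ with $x+[-\varepsilon,\varepsilon]h\subseteq U$. By the fundamental theorem of calculus (valid in Bastiani calculus, where $\sH$ is complete), for $0<|t|<\varepsilon$ and every $v\in\sF$,
$$\frac{f(x+th,v)-f(x,v)}{t}-d_1f(x,v;h)=\int_0^1\bigl(d_1f(x+sth,v;h)-d_1f(x,v;h)\bigr)\,ds.$$
Taking the supremum over $\|v\|\le1$ bounds the operator norm of the left-hand side by
$$\sup_{s\in[0,1]}\bigl\|G_1(x+sth,h)-G_1(x,h)\bigr\|_{\mathrm{op}}.$$
This is the step I expect to be the main obstacle, since pointwise-in-$v$ convergence must be upgraded to uniform convergence over the unit ball. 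The upgrade comes from continuity of $G_1$ in the operator norm, applied on the compact set $\{x+sth : s\in[0,1],\ |t|\le\varepsilon\}$. By compactness, and since $G_1$ is continuous at $(x,h)$, the supremum tends to $0$ as $t\to0$. Hence $d(f^\vee)(x;h)$ exists in $\mathcal L(\sF,\sH)$ and equals $G_1(x,h)$, and it is jointly continuous in $(x,h)$. So $f^\vee$ is $C^1$ with $d(f^\vee)(x;h)(v)=d_1f(x,v;h)$.

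\textbf{Step 2: induction.} Apply the same argument to the map
$$(U\times\sE^{k})\times\sF\to\sH,\qquad \bigl((x,h_1,\ldots,h_k),v\bigr)\mapsto d_1^kf(x,v;h_1,\ldots,h_k).$$
This map is smooth and linear in $v$, and its adjoint is $G_k$. Differentiating only in $x$ with $h_1,\ldots,h_k$ held fixed, Step 1 gives
$$d\bigl(G_k(\cdot,h_1,\ldots,h_k)\bigr)(x;h_{k+1})=G_{k+1}(x,h_1,\ldots,h_{k+1}).$$
Inductively, $d^k(f^\vee)=G_k$, and each $G_k$ is continuous. Therefore $f^\vee$ is $C^k$ for all $k$ and satisfies the stated formula for its higher differentials.
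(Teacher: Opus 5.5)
Your proposal is correct and follows essentially the same route as the paper. Both arguments obtain operator-norm continuity of the adjoints $G_k$ (the paper's $T_k$) from the recalled continuity fact. Both then use the fundamental theorem of calculus to bound the difference quotient by $\sup_s\|G_1(x+sth,h)-G_1(x,h)\|_{\mathrm{op}}$, and iterate in $k$. The only quibble is that the appeal to compactness in Step 1 is unnecessary: continuity of $G_1$ at $(x,h)$, together with $|st|\le|t|$, already forces the supremum to $0$.
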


\begin{proof}
Since $f$ is smooth
and $f(x,\cdot)$ is linear, the map
$v\mapsto d_1^k f(x,v;h_1,\ldots,h_k)$ is linear and continuous. Hence
$T_k(x,h_1,\ldots,h_k)(v):=d_1^k f(x,v;h_1,\ldots,h_k)$ defines a continuous
map $T_k\colon U\times \sE^k\to\mathcal L(\sF,\sH)$.

For $k=0$, this says precisely that $f^\vee=T_0$ is continuous. Let
$x\in U$, $h\in E$, and let $t$ be sufficiently small. For $v\in\sF$, the
fundamental theorem of calculus gives
$f^\vee(x+th)(v)-f^\vee(x)(v)=\int_0^t d_1f(x+sh,v;h)\,ds$. Therefore
\[
\left\|
\frac{f^\vee(x+th)-f^\vee(x)}{t}-T_1(x,h)
\right\|_{\mathrm{op}}
\le
\sup_{0\le s\le t}\|T_1(x+sh,h)-T_1(x,h)\|_{\mathrm{op}},
\]
which tends to $0$ as $t\to0$ by continuity of  $T_1$. Thus
$d(f^\vee)(x;h)(v)=d_1f(x,v;h)$.
Repeating the same argument with $T_k$ in place of $f^\vee$ gives, $d^k(f^\vee)(x;h_1,\ldots,h_k)(v)=d_1^k f(x,v;h_1,\ldots,h_k)$, for all
$k\ge0$.
Since each $T_k$ is continuous, $f^\vee:U\to\mathcal L(\sF,\sH)$ is smooth.
\end{proof}

\begin{remark}\label{remark:lcvb-bvb}
\begin{enumerate}
\item In the Banach category, the vector bundle definition used here is equivalent to
the standard one. Indeed, suppose first that
$\Psi\colon U\times \sF\to U\times \sF$,
$\Psi(x,u)=(x,\widehat\Psi(x,u))$,
is a smooth trivial vector bundle isomorphism. Then $\widehat\Psi(x,\cdot)$ is
a continuous linear automorphism of $\sF$ for every $x$. By
Lemma~\ref{lem:smooth-linear-expo}, the map
$A\colon U\to \mathcal L(\sF)$,
$A(x):=\wh\Psi(x,\cdot)$,
is smooth. Since each $A(x)$ is invertible, $A$ takes values in
$\GL(\sF)\subseteq \mathcal L(\sF)$. Thus
$\Psi(x,u)=(x,A(x)u)$
with $A\colon U\to \GL(\sF)$ smooth.
Conversely, if $A\colon U\to \GL(\sF)$ is smooth, then
$\Psi(x,u):=(x,A(x)u)$
is a smooth trivial vector bundle isomorphism, because the evaluation map
$\operatorname{ev}\colon \mathcal L(\sF)\times \sF\to \sF$
is smooth. Its inverse is given by
$\Psi^{-1}(x,u)=(x,A(x)^{-1}u)$,
and this is smooth since inversion in the Banach-Lie group $\GL(\sF)$ is smooth.

\item The same argument applies to multilinear maps in the Banach case. If $\sE$ is a locally convex space, $\sF$ and
$\sH$ are Banach spaces, $U\subseteq \sE$ is open, and
$B\colon U\times \sF^k\to \sH$
is smooth and $B(x,\cdot)$ is $k$-linear for every $x\in U$,
then
$b\colon U\to \mathcal L^k(\sF,\sH)$,
$b(x)(v_1,\ldots,v_k):=B(x,v_1,\ldots,v_k)$,
is smooth, where $\mathcal L^k(\sF,\sH)$ carries its usual operator norm topology.
If $B(x,\cdot)$ is alternating, then $b$ takes values in the closed subspace
$\mathcal L^k_{\mathrm{alt}}(\sF,\sH)$.
\end{enumerate}
\end{remark}

 A  subset $L\subseteq E$ is 
 is \emph{(split) subbundle} if,
 for every $x\in M$, there exists
a local trivialization $\phi\colon p^{-1}(U_\phi)\to U_\phi\times \sF_\phi$
and a (complemented) closed subspace $\sH_\phi\subseteq \sF_\phi$ such that the restriction $\phi|_{p^{-1}(U_\phi)\cap L}\colon p^{-1}(U_\phi)\cap L\to U_\phi\times\sH_{\phi}$ is a  diffeomorphism, thus defining a local trivialization for $L$ about $x$. 
In particular, $L$ is a  (split) submanifold of $E$.

Let $f\colon N\to M$ be a smooth map. The \emph{pullback bundle} 
 $ f^*E \to N$
 is constructed in the usual way; we set
 $f^*E = N\times_{f,p} E $ this has a smooth manifold structure for which 
 $f^*p:=\pr_1|_{f^*E}\colon f^*E\to N$ is a submersion.
 Let $y\in N$ and let $\psi$ be a local trivialization about $f(y)$. Let $U_{\wt\psi}=f^{-1}(U_\psi)$.
 Note that $(f^*p)^{-1}(U_{\wt\psi}) = (f^*E)\cap (U_{\wt\psi}\times p^{-1}(U_\psi))$. Construct a diffeomorphism $\wt\psi\colon (f^*p)^{-1}(U_{\wt\psi})\to U_{\wt\psi}\times \sF_\psi$ by restricting 
 $\id_{U_{\wt\psi}}\times (\pr_2\circ \psi)$ to $(f^*E)\cap (U_{\wt\psi}\times p^{-1}(U_\psi))$. The inverse is given by $\wt\psi^{-1}(x, u)=
 (x, \psi^{-1}(f(x), u))$. For another local trivialization $\phi$ defining a diffeomorphism $\wt\phi$ in the same manner, the overlap map $\wt\phi\circ{\wt\psi}^{-1}$ is given by the smooth map
 $$
 (U_{\wt\psi}\cap U_{\wt\phi})\times \sF_\psi \ni (x,u)\mapsto (x, \pr_2((\phi\circ\psi^{-1})(f(x), u))) \in (U_{\wt\psi}\cap U_{\wt\phi})\times \sF_{\phi}.
 $$
 This shows that $f^*p\colon f^*E\to N$ is a smooth vector bundle.

We make use of the exponential law, particularly in the proof of
Lemma~\ref{lemma:current-first-order}. We refer to
\cite[Chapter~4]{GloecknerNeeb2026InfiniteDimensional} and
\cite[Chapter~2]{Schmeding2022IntroductionInfiniteDimensional} for details about
the following facts. Below we use the smooth compact open topologies on the spaces of smooth maps.

\begin{enumerate}
    \item If $K$ is a smooth compact manifold (hence finite-dimensional), $U$ is an open subset of a locally
    convex space, and $\sF$ is a locally convex space, then a map $ f\colon U\to C^\infty(K,\sF)$
    is smooth, where $C^\infty(K,\sF)$ is endowed with the smooth compact open topology, if and only if the adjoint map$ 
        f^\wedge\colon U\times K\to \sF, $ 
        $(u,k)\mapsto f(u)(k),$ 
    is smooth. In particular, the evaluation map $
        \ev\colon C^\infty(K,\sF)\times K\to F$
    is smooth.

    \item Let $q\colon E\to K$ be a locally convex vector bundle over a compact
    manifold $K$. Then the space $\Gamma(E)$ of smooth sections is a locally
    convex space. Moreover, a map $
        f\colon N\to \Gamma(E)$
    from a locally convex manifold $N$ is smooth if and only if its adjoint $
        f^\wedge\colon N\times K\to E,$ 
        $
        (n,k)\mapsto f(n)(k),
    $
    is smooth and satisfies $q\circ f^\wedge=\pr_K$. In particular, the
    evaluation map $
        \ev\colon \Gamma(E)\times K\to E$
    is smooth. See the appendices in~\cite{AmiriGloecknerSchmeding2020CurrentGroupoids}.
\end{enumerate}

Given a compact manifold $K$ and a locally convex manifold $M$, we endow
$C^\infty(K,M)$ with the smooth compact-open topology. A manifold structure on
$C^\infty(K,M)$ is \emph{canonical} if the manifold topology is the smooth
compact-open topology and for any smooth manifold $N$, a map $f\colon N\to C^\infty(K,M)$ is smooth if and only if
$
    f^\wedge \colon N\times K\to M,$ 
    $(n,k)\mapsto f(n)(k),$
is smooth. In particular, the evaluation map
$\ev\colon C^\infty(K,M)\times K\to M$
is smooth.

A groupoid $\G\tto M$ with structure maps $(\src,\tgt,\mult,\one,\inv)$ is 
a \emph{Lie groupoid} 
if $M$ is a locally convex manifold which is Hausdorff, $\G$
is a not necessarily Hausdorff manifold,
$\src$ is a  submersion,  $\inv$ is a diffeomorphism, hence $\tgt$ is also a submersion, $\mult$ is a smooth map, where $\mset{\G}=(\src\times\tgt)^{-1}(\Delta_M)$ is a split submanifold of $\G\times\G$, and $\one$ is an embedding. 
For all $g\in \G$,
right-multiplication $R_g(h) = hg$ is a smooth diffeomorphism on source fibers $R_g\colon \G_{\tgt(g)}\to \G_{\src(g)}$.
We further assume that the source fibers  $\G_x : =\src^{-1}(x)$ are Hausdorff.
We say that $\G$ is source connected (resp. simply connected)  
if each $\G_x$ is connected (resp. simply connected).
We note that $T\src$ is a submersion and that $\ker(T\src)$ is a split subbundle of
$T\G$. Indeed, let $\phi\colon U_\phi\to V_\phi\subseteq\sG_\phi= \sM_\psi\times \sC$ and
$\psi\colon U_\psi\to V_\psi\subseteq \sM_\psi$ be submersion charts for $\src$, so that
$
\psi\circ s\circ \phi^{-1}=p|_{V_\phi},$
for a continuous projection
$
p\colon M_\psi\times C\to M_\psi.$
Under the usual tangent identifications,
$
T(\psi\circ \src\circ \phi^{-1})=Tp=p\times p.
$
Thus $T\src$ is locally a projection onto a complemented space. Moreover,
$ T\phi\bigl(\ker(T\src)\cap TU_\phi\bigr) =
V_\phi\times(\{0\}\times \sC),
$
which is a split subbundle of
$TV_\phi\cong V_\phi\times(\sM_\psi\times \sC)$. 
The vector bundle $\Lie(\G):=\one^{*}\ker(T\src)\to M$ is anchored with
the anchor defined by restricting
$T\tgt$. A Lie bracket is defined as the Lie bracket of right-invariant extensions over the unit map. That is, for $\xi,\eta\in\Gamma(\Lie(\G)|_U)$, we set $[\xi,\eta] = [\ora\xi,\ora\eta]\circ\one$, where $\ora\xi (g)=T_{1_{\tgt(g)}}R_g(\xi(\tgt(g)))$, for $g\in\tgt^{-1}(U)$, and similarly $\ora\eta$ is defined.

\section{First order locally convex Lie algebroids}\label{sec:firstorderLiealgbd}
\begin{definition}\label{def:liealgebroid}
A \emph{first-order 
Lie algebroid}
is a vector bundle $p\colon A\to M$
together with a vector bundle morphism  $\anch\colon A\to TM$ over $\id_M$,
called the \emph{anchor},
and a morphism of sheaves of $\R$-modules 
$\LB_{\shA}\colon \shA\otimes_\R \shA\to \shA$ 
on the sheaf of sections $\shA$ of $A$ such that:
\begin{enumerate}\itemsep0em

\item \label{item:LA1} for every open set $U\subseteq M$,
the bracket $\LB_{\shA(U)}$ turns $\shA(U)$ into a Lie algebra;
\item \label{item:LA2} for every open set $U\subseteq M$,
every $\xi,\eta\in \shA(U)$, and every $f\in \shcinf_M(U)$,
the Leibniz rule $$[\xi,f\eta]_{\shA(U)}=f[\xi,\eta]_{\shA(U)}+(\anch(\xi).f)\eta$$ holds;

\item \label{item:LA3} for every local trivialization $\phi\colon A|_U\to U\times \sF_\phi$,
there exists a smooth map $C_\phi\colon U\times \sF_\phi\times \sF_\phi\to \sF_\phi$
such that, for each $x\in U$, the map $(u,v)\mapsto C_\phi(x,u,v)$ is alternating bilinear
and, for all $\xi,\eta\in \shA(U)$, one has
\begin{equation}\label{eq:first-order}
[\xi,\eta]^\phi(x)=\anch(\xi).\eta^\phi(x)-\anch(\eta).\xi^\phi(x)+C_\phi(x,\xi^\phi(x),\eta^\phi(x)),     
\end{equation}
where $\xi^\phi,\eta^\phi,[\xi,\eta]^\phi\in C^\infty(U,\sF_{\phi})$ are the local representatives in $\phi.$
\end{enumerate}
\end{definition}

The smooth maps $C_\phi$ are uniquely determined by the value of the Lie bracket at 
constant local sections; see Lemma~\ref{lemma:characterization}.

For a first-order Banach Lie algebroid, the local structure map
$C_\phi\colon U_\phi\times \sF_\phi\times \sF_\phi\to \sF_\phi$
may equivalently be regarded as a smooth map
$c_\phi\colon U_\phi\to \mathcal L^2_{\mathrm{alt}}(\sF_\phi)$, see Remark~\ref{remark:lcvb-bvb}.

\begin{remark}[Coordinate change for $C_\phi$]\label{remark:coordinate-change-C}
Let $\phi \colon A|_U \to U \times \sF_\phi$ and $\wt\phi \colon A|_{\wt U} \to \wt U \times \sF_{\wt\phi}$ be two local trivializations. Set
$
(\wt\phi \circ \phi^{-1})(x,u) = (x,\wh K(x,u))
$
for $(x,u) \in (U \cap \wt U) \times \sF_\phi$, where $\wh K$ is smooth and linear in the second variable. Let $\anch^\phi$ be the local representative of the anchor in the trivialization $\phi$. Then, on $U \cap \wt U$ with a slight abuse of notation, the local structure maps $C_\phi$ and $C_{\wt\phi}$ satisfy
\[\begin{split}
C_{\wt\phi}(x,\wh K(x,u),\wh K(x,v))
&=
\wh K(x,C_{\phi}(x,u,v))
- d_1\wh K(x,v;\anch^\phi(x,u))
+ d_1\wh K(x,u;\anch^\phi(x,v)).
\end{split}
\]
Indeed, if $\xi^{\wt\phi}(x)=\wh K(x,\xi^\phi(x))$ and $\eta^{\wt\phi}(x)=\wh K(x,\eta^\phi(x))$, then one writes the bracket in the two trivializations and compares the two expressions.
\end{remark}

We shall write $\LB$ and suppress the subscript
when the Lie algebroid is clear from the context.
The tangent bundle $TM\to M$,
with $\anch=\id_{TM}$ and the usual Lie bracket of vector fields,
is a Lie algebroid.
Obviously, $\anch$ extends to a morphism of sheaves of $\shcinf_M$-modules $\anch\colon \shA\to \shTM$.

\begin{lemma}\label{lemma:anchsheaf}
The anchor $\anch\colon A\to TM$
induces a morphism of sheaves of Lie algebras
$\anch\colon (\shA,\LB_{\shA})\to (\shTM,\LB_{\shTM})$.
\end{lemma}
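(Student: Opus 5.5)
We must show $\anch[\xi,\eta]=[\anch\xi,\anch\eta]$ for all $\xi,\eta\in\shA(U)$. The anchor is already a morphism of sheaves of $\shcinf_M$-modules, so only bracket compatibility remains. The plan is the classical argument: first extract from the Jacobi identity and the Leibniz rule that the defect is $\shcinf_M$-linear and annihilates functions. Then use the first-order condition~\eqref{eq:first-order} to conclude locally, because we cannot pair with covectors or use local frames.

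\emph{Step 1 (action on functions).} For $f\in\shcinf_M(U)$ and $\xi,\eta,\zeta\in\shA(U)$, expand the Jacobi identity for $\xi,\eta,f\zeta$ using the Leibniz rule twice. The terms involving brackets of sections cancel by Jacobi for $\xi,\eta,\zeta$. What remains is
\[
\bigl(\anch([\xi,\eta]).f-\anch(\xi).(\anch(\eta).f)+\anch(\eta).(\anch(\xi).f)\bigr)\zeta=0 .
\]
Put $D(\xi,\eta):=\anch[\xi,\eta]-[\anch\xi,\anch\eta]\in\shTM(U)$. We get $(D(\xi,\eta).f)\,\zeta=0$ for every $\zeta$. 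Choose $\zeta$ locally nonvanishing near a given point (a constant section in a trivialization, if $A\neq 0$). This gives $D(\xi,\eta).f=0$ on $U$ for all smooth $f$ defined near each point.

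\emph{Step 2 (vanishing of the vector field).} In infinite dimensions, $D.f=0$ for all scalar $f$ does not immediately give $D=0$, since scalar functions need not separate tangent vectors through charts. I would apply the computation of Step 1 verbatim with $\sF$-valued functions $f\in C^\infty(U,\sF)$, taking $f=\phi$ the chart map itself on a chart domain $U_\phi\subseteq U$. The Leibniz rule only involves scalar multiplication, so instead use the first-order formula~\eqref{eq:first-order}. Apply it in a trivialization to $\eta$ replaced by $f\zeta$, with $f$ a scalar function; this recovers Leibniz. Then apply Jacobi to the representatives. Alternatively, and more cleanly, use the continuous linear functionals $\lambda\in\sM_\phi'$: take $f=\lambda\circ\phi$. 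Step 1 gives $\lambda(D(\xi,\eta)^\phi(x))=0$ for all $\lambda$. By Hahn–Banach, which is valid since the model space is Hausdorff locally convex, this gives $D(\xi,\eta)^\phi=0$. Hence $D(\xi,\eta)=0$ on $U_\phi$. Since $\shTM$ is a sheaf, it follows that $D(\xi,\eta)=0$ on $U$.

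\emph{Step 3.} Compatibility with restrictions is immediate, since both brackets and the anchor are morphisms of sheaves. So $\anch$ is a morphism of sheaves of Lie algebras.

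The main obstacle is the separation issue in Step 2, together with the existence of a nonvanishing $\zeta$ near each point. Functionals composed with charts resolve the separation issue. The nonvanishing $\zeta$ is obtained by pulling back a nonzero constant vector through a local trivialization. When the fiber is $0$ near a point, there is nothing to prove: then $\anch=0$ there and $[\xi,\eta]=0$, so both sides vanish. Note that the functions $\lambda\circ\phi$ are defined only on $U_\phi$. This is harmless because the Leibniz rule holds on every open set, so the Step 1 identity holds after restricting everything to $U_\phi$.
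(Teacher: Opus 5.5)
Your proof is correct and is essentially the paper's argument. The Jacobi--Leibniz computation gives $(D.f)\zeta=0$; a nonzero constant section in a local trivialization then gives $D.f=0$; testing with $f=\lambda\circ\phi$ for $\lambda\in\sM_\phi'$ and applying Hahn--Banach gives $D=0$; and the zero-fiber case is handled separately (your local-triviality argument there is fine, and the tentative detour in Step~2 through $\sF$-valued functions and~\eqref{eq:first-order} is unnecessary and can be deleted).
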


\begin{proof}
Let $U\subseteq M$ be open and let $\xi,\eta\in \shA(U)$.
Set $X:=\anch([\xi,\eta])-[\anch(\xi),\anch(\eta)]\in \shTM(U)$.
We show that $X=0$.
Let $f\in \shcinf_M(U)$ and $\zeta\in \shA(U)$.
Using the Jacobi identity in $\shA(U)$ and the Leibniz rule, we compute
\[ 0=[[ \xi,\eta],f\zeta]+[[\eta,f\zeta],\xi]+[[f\zeta,\xi],\eta]=:(X.f)\zeta. \]
Fix $x\in U$. If $A_x= \{0\}$, then by~\eqref{eq:first-order}, $\xi(x)=\eta(x)=[\xi,\eta](x)=0$. Hence $X(x)=0$. Suppose otherwise and choose a local trivialization of $A$ about $x$
and a constant local section $\zeta$ with $\zeta(x)\neq 0$.
Evaluating at $x$ gives $(X.f)(x)\zeta(x)=0$, hence $(X.f)(x)=0$.
Since this holds for every $f$, we get $X(x)=0$. Indeed,
possibly after shrinking $U$, if $\kappa$ is a chart 
about
$x$ with domain $U$ and $X(x)\neq 0$ then $T_x\kappa(X(x))\neq 0$. 
By the Hahn-Banach theorem, there exists $\lambda\in \sM_\kappa'$ such that $X.(\lambda\circ \kappa)(x) \neq 0$, a contradiction.
As $x$ was arbitrary, $X=0$ on $U$.
\end{proof}

The following lemma 
is elementary.

\begin{lemma}[Characterization of~\ref{item:LA3}]\label{lemma:characterization}
Let $p\colon A\to M$ be  
a locally convex vector bundle with an anchor $\anch$ and a bracket $\LB_{\shA}$ satisfying Conditions~\ref{item:LA1} and \ref{item:LA2}. 
Then the following are equivalent:
\begin{enumerate}[label=(\roman*)]
    \itemsep0em
    \item \label{item:Characterization1} $A\to M$ is a first-order locally convex Lie algebroid;
    \item \label{item:Characterization2} In each local trivialization $\phi\colon A|_U\to U\times \sF_\phi$, set $C_\phi(x,u,v) := [\xi_u,\xi_v]^\phi(x)$, where $\xi_u,\xi_v\in\shA(U)$ are the constant sections associated with $u,v\in\sF_\phi$.
    Then
    \begin{enumerate}
        \itemsep0em 
        \item $C_\phi\colon U\times \sF_\phi\times \sF_\phi \to \sF_\phi$  is smooth;
        \item for every $x\in U$, $u\in\sF_\phi$,
        $\zeta\in \shA(U)$ for which $\zeta^\phi(x)=0$,
        we have
        $[\xi_u,\zeta]^\phi(x) = \anch(\xi_u).\zeta^\phi(x)$;
        \item if $\zeta_1,\zeta_2\in \shA(U)$ satisfy $
        \zeta_1^\phi(x)= \zeta_2^\phi(x) = 0$, then 
        $[\zeta_1,\zeta_2]^\phi(x)=0$.
    \end{enumerate}
\end{enumerate}
\end{lemma}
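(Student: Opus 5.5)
The proof splits into the two implications. Throughout, fix a local trivialization $\phi\colon A|_U\to U\times\sF_\phi$. For $u\in\sF_\phi$ let $\xi_u$ be the constant section with $\xi_u^\phi\equiv u$; it lies in $\shA(U)$ because $\phi^{-1}(x,u)$ is smooth in $x$.

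\emph{(i)$\Rightarrow$(ii).} Apply \eqref{eq:first-order} to constant sections. Since $\xi_u^\phi$ and $\xi_v^\phi$ are constant, both anchor terms vanish, so $[\xi_u,\xi_v]^\phi(x)=C_\phi(x,u,v)$. Hence the map defined in (ii) coincides with the smooth map of \ref{item:LA3}, which gives (a) and also the uniqueness claim made after Definition~\ref{def:liealgebroid}. For (b), apply \eqref{eq:first-order} to $\xi_u$ and $\zeta$ with $\zeta^\phi(x)=0$. The term $\anch(\zeta).\xi_u^\phi$ vanishes because $\xi_u^\phi$ is constant, and $C_\phi(x,u,0)=0$ by bilinearity. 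For (c), evaluate at $x$: the anchor terms are $\anch(\zeta_1)(x).\zeta_2^\phi$ and its symmetric counterpart, and $\anch(\zeta_i)(x)=0$ because $\anch$ is fiberwise linear and $\zeta_i(x)=0$. The $C_\phi$ term vanishes as well.

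\emph{(ii)$\Rightarrow$(i).} First we must show that $C_\phi(x,\cdot,\cdot)$ is alternating and bilinear. Alternation follows from antisymmetry of the bracket. Additivity follows from $\xi_{u+u'}=\xi_u+\xi_{u'}$ and $\R$-bilinearity of the bracket, and the same argument handles real scalars. Next, fix $\xi,\eta\in\shA(U)$ and $x\in U$, and set $u=\xi^\phi(x)$, $v=\eta^\phi(x)$. Write $\xi=\xi_u+\zeta_1$ and $\eta=\xi_v+\zeta_2$, so that $\zeta_i^\phi(x)=0$. Expanding by bilinearity,
\[
[\xi,\eta]^\phi(x)=C_\phi(x,u,v)+[\xi_u,\zeta_2]^\phi(x)-[\xi_v,\zeta_1]^\phi(x)+[\zeta_1,\zeta_2]^\phi(x).
\]
By (b) and (c) this equals $C_\phi(x,u,v)+\anch(\xi_u).\zeta_2^\phi(x)-\anch(\xi_v).\zeta_1^\phi(x)$.

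It remains to match these terms with \eqref{eq:first-order}. Since $\xi_v^\phi$ is constant, $\anch(\xi).\eta^\phi(x)=\anch(\xi).\zeta_2^\phi(x)$. The quantity $\anch(\xi).\zeta_2^\phi(x)$ depends only on the tangent vector $\anch(\xi)(x)=\anch(\xi_u)(x)$, because $X.f(x)=d(f\circ\kappa^{-1})(\kappa(x);X^\kappa(\kappa(x)))$ in a chart $\kappa$. Hence $\anch(\xi_u).\zeta_2^\phi(x)=\anch(\xi).\eta^\phi(x)$, and symmetrically $\anch(\xi_v).\zeta_1^\phi(x)=\anch(\eta).\xi^\phi(x)$. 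This gives \eqref{eq:first-order} pointwise at every $x$, so (i) holds.

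The only delicate point is locality: (b) and (c) must be applied to sections defined on all of $U$, and the sections $\zeta_i=\xi-\xi_u$ are indeed in $\shA(U)$, so no bump functions are required. Bump functions may not exist in the locally convex setting, so it matters that the argument avoids them. With that checked, I expect no real obstacle; the main care is the pointwise dependence of $X.f(x)$ on $X(x)$.
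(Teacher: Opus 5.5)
Your proof is correct and follows the same route as the paper. For (i)$\Rightarrow$(ii) you specialize \eqref{eq:first-order} to constant sections, and for (ii)$\Rightarrow$(i) you split $\xi=\xi_u+\zeta_1$, $\eta=\xi_v+\zeta_2$ and expand bilinearly. You also supply details the paper leaves implicit: that $C_\phi(x,\cdot,\cdot)$ is alternating bilinear, and that $\anch(\xi_u).\zeta_2^\phi(x)=\anch(\xi).\eta^\phi(x)$ because $X.f(x)$ depends only on $X(x)$.
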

\begin{proof}
    By definition \eqref{item:Characterization1} implies \eqref{item:Characterization2}. Now consider a local trivialization $\phi$.
    Let $\xi, \eta\in \shA(U)$, and let $x\in U$. 
    Fix $u=\xi^\phi(x)$ and $v=\eta^\phi(x)$. Write
    $\xi = \wt \xi +\xi_u$ and $\eta = \wt \eta + \eta_v$, where $\xi_u(y) := \phi^{-1}(y, u)$ and $\eta_v(y) := \phi^{-1}(y, v)$.
    Expanding 
    $[\xi,\eta]^\phi(x) = [\wt \xi+\xi_u,\wt \eta +  \eta_v]^\phi(x)$, we see that
    $$
    [\xi,\eta](x) = [\xi_u,\eta_v](x)+[\xi_u,\wt\eta](x)+
    [\wt\xi,\eta_v](x) + [\wt\xi,\wt\eta](x).
    $$
    Hence antisymmetry and the conditions in \eqref{item:Characterization2}
    gives \eqref{eq:first-order}.
\end{proof}

\begin{corollary}
Let $M$ be a locally convex manifold and 
    let $p\colon A\to M$ be  
a  vector bundle with a finite-dimensional typical fiber and with an anchor $\anch$ and a bracket $\LB_{\shA}$ satisfying Conditions~\ref{item:LA1} and \ref{item:LA2}. 
Then $A\to M$ is a first-order Lie algebroid.
\end{corollary}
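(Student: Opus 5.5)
We verify condition~\ref{item:Characterization2} of Lemma~\ref{lemma:characterization} in an arbitrary local trivialization $\phi\colon A|_U\to U\times \sF_\phi$. Since the typical fiber is finite-dimensional, $\sF_\phi\cong\R^n$, and we fix a basis $e_1,\dots,e_n$ with constant sections $\xi_i:=\xi_{e_i}\in\shA(U)$. These form a frame: every $\zeta\in\shA(U)$ can be written $\zeta=\sum_i f_i\xi_i$ with $f_i=\epsilon_i\circ\zeta^\phi\in\shcinf_M(U)$, where $\epsilon_i$ are the coordinate functionals. We first note that the bracket is local: if $\zeta$ vanishes on an open $V\subseteq U$, then $[\xi,\zeta]$ vanishes on $V$. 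This follows from the sheaf-morphism hypothesis, since restrictions commute with brackets. Hence all the identities below may be checked on $U$ itself.

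\emph{Smoothness (a).} Bilinearity of the bracket over $\R$ gives $[\xi_u,\xi_v]=\sum_{i,j}u_iv_j[\xi_i,\xi_j]$. Therefore
\[
C_\phi(x,u,v)=\sum_{i,j}u_iv_j\,[\xi_i,\xi_j]^\phi(x),
\]
a finite sum of products of the polynomial functions $u_iv_j$ with the smooth maps $[\xi_i,\xi_j]^\phi\in C^\infty(U,\R^n)$. Thus $C_\phi$ is smooth, and it is alternating bilinear in $(u,v)$ by antisymmetry of the bracket.

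\emph{Condition (b).} Write $\zeta=\sum_j f_j\xi_j$ with $f_j(x)=0$. The Leibniz rule gives
\[
[\xi_u,\zeta]=\sum_j f_j[\xi_u,\xi_j]+\sum_j(\anch(\xi_u).f_j)\,\xi_j .
\]
At $x$ the first sum vanishes, and the second has local representative $\sum_j(\anch(\xi_u).f_j)(x)e_j=\anch(\xi_u).\zeta^\phi(x)$. The last equality holds because $\zeta^\phi=\sum_jf_je_j$ and $X.(\cdot)$ is linear and commutes with the finite sum.

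\emph{Condition (c).} Write $\zeta_1=\sum_i g_i\xi_i$ and $\zeta_2=\sum_j h_j\xi_j$ with $g_i(x)=h_j(x)=0$. By the Leibniz rule applied twice, using antisymmetry for the left slot,
\[
[g_i\xi_i,h_j\xi_j]=g_ih_j[\xi_i,\xi_j]+g_i(\anch(\xi_i).h_j)\xi_j-h_j(\anch(\xi_j).g_i)\xi_i .
\]
Every term carries a factor $g_i$ or $h_j$, so each vanishes at $x$. Summing over $i,j$ gives $[\zeta_1,\zeta_2]^\phi(x)=0$, and Lemma~\ref{lemma:characterization} then yields the first-order condition.

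The only delicate point is justifying the frame decomposition and the Leibniz expansions as identities in $\shA(U)$. This needs $f_j$ to be smooth on $U$, which holds since $\zeta^\phi$ is smooth and $\epsilon_j$ is continuous linear. It also needs the Leibniz rule in the left slot, which follows from antisymmetry, so no genuine obstacle remains.
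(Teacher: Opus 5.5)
Your proof is correct and follows essentially the same route as the paper. Both arguments take the constant sections $\phi^{-1}(\cdot,e_i)$ as a local frame, write $C_\phi$ in terms of the brackets of the frame elements, and verify conditions (b) and (c) of Lemma~\ref{lemma:characterization} via the Leibniz rule; your explicit expansion of $[g_i\xi_i,h_j\xi_j]$, using antisymmetry for the left slot, spells out what the paper dismisses with ``follows similarly.''
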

\begin{proof}
Let $\phi\colon A|_U\to U\times \R^n$ be a local trivialization.
    Let $\{e_1,\ldots,e_n\}$ be a basis for the fiber, and define $\underline e_i = \phi^{-1}(\cdot, e_i)$. 
    Now $[\underline e_i,\underline e_j]= \sum_k c^k_{ij} \underline e_k$, where $c^k_{ij}\in C^\infty(U)$.
    Set $C_\phi(x,u,v) := \sum_{i,j,k}u^iv^jc^k_{ij}(x)e_k$. Then $C_\phi$ is smooth and alternating bilinear in $u$ and $v$. Write $\zeta \in \shA(U)$ as $\zeta = \sum \alpha^i\underline e_i$. If 
   $\zeta^\phi(x) =0$, then $\alpha^j(x) = 0$. By the Leibniz rule, 
   $[\underline e_i,\zeta]^\phi(x) = \anch(e_i).\zeta^\phi$.
   The final statement in \eqref{item:Characterization2} follows similarly. Thus,
   by Lemma~\ref{lemma:characterization}, $A\to M$ is first-order.
\end{proof}

\begin{corollary}
    Every finite-dimensional Lie algebroid is first-order.
\end{corollary}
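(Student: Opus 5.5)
This is a direct consequence of the preceding corollary. A finite-dimensional Lie algebroid $A\to M$ is, by definition, a vector bundle of finite rank $n$ over a finite-dimensional manifold $M$. It carries an anchor $\anch\colon A\to TM$ and a bracket on the sheaf of sections. That bracket makes each $\shA(U)$ a Lie algebra and satisfies the Leibniz rule. Hence Conditions~\ref{item:LA1} and~\ref{item:LA2} of Definition~\ref{def:liealgebroid} hold by assumption.

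The plan is to check that the hypotheses of the previous corollary are met and then invoke it.
\begin{enumerate}
\item The base $M$ is in particular a locally convex manifold, modeled on $\R^m$ with its unique Hausdorff vector topology.
\item The typical fiber $\R^n$ is finite-dimensional.
\item Local trivializations in the finite-dimensional sense are local trivializations in the sense of the paper. Indeed, the transition maps $(x,u)\mapsto(x,g(x)u)$ with $g$ smooth into $\GL_n(\R)$ are smooth trivial vector bundle isomorphisms, and finite-dimensional Bastiani smoothness agrees with classical smoothness.
\end{enumerate}
The previous corollary then yields Condition~\ref{item:LA3}, so $A$ is first-order.

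One point needs care. The classical definition of a Lie algebroid often specifies the bracket only on global sections $\Gamma(A)$, whereas Definition~\ref{def:liealgebroid} requires a morphism of sheaves. I would handle this with the standard locality argument. By the Leibniz rule, if $\eta$ vanishes on an open set $V$, take a bump function $f$ with $f=1$ near $x\in V$ and $\supp f\subseteq V$. Then $f\eta=0$, so
\[ 0=[\xi,f\eta]=f[\xi,\eta]+(\anch(\xi).f)\eta, \]
and at $x$ this gives $[\xi,\eta](x)=0$. Bump functions exist in finite dimensions, so the bracket is local. It therefore descends uniquely to a bracket on the sheaf $\shA$, compatible with restrictions. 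The Lie algebra axioms and the Leibniz rule hold locally because any local sections can be extended from a smaller neighborhood by cutoffs.

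I expect no genuine obstacle. The only step beyond bookkeeping is the sheafification of the bracket via bump functions, and that is standard.
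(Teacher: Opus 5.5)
Your proposal is correct and follows the paper's route: the paper also obtains this corollary directly from the preceding one, where Condition~\ref{item:LA3} is derived from Conditions~\ref{item:LA1} and~\ref{item:LA2} using a local frame and its structure functions. Your bump-function argument, which turns a bracket on $\Gamma(A)$ into a sheaf bracket, is sound; the paper covers this point only in its subsequent remark on smoothly regular bases.
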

More generally, if $M$ is smoothly regular (admits smooth bump functions)
then any anchored vector bundle $A\to M$  with finite-dimensional fibers and a Lie bracket on $\Gamma(A)$  satisfying the Leibniz rule
is first-order; See
~\cite[Remark~4.6]{BeltitaGolinskiJakimowiczPelletier2019}.

\begin{example}[Locally convex Lie algebras]
Let $M=\{*\}$ and let $A=\mathfrak g\to \{*\}$, where $\mathfrak g$ is a locally convex Lie algebra (with continuous bracket), for example the Lie algebra of a locally convex Lie group~\cite{GloecknerNeeb2026InfiniteDimensional}.
Set $\anch=0$, and define the bracket on sections by the bracket of $\mathfrak g$.
Then $A$ is a first-order Lie algebroid.
Indeed, in the only local trivialization, the first-order formula reduces to $[u,v]=C(u,v)$, where $C(u,v):=[u,v]_{\mathfrak g}$.
\end{example}

\begin{remark}[Non-first order Lie algebroid]
We note that not every Lie algebroid is first order.
In fact that not every Lie algebra is locally convex (with continuous bracket)~\cite{GloecknerNeeb2026InfiniteDimensional}.
A naive standard example can be constructed as follows.
Let $\sE$ be an infinite dimensional Banach space. Let $\lambda$ be a non-continuous linear functional. Define $[u,v]=\lambda(u)v-\lambda(v)u$
for $u,v\in \sE$. This define a Lie bracket. Indeed, it is clearly bilinear, and the Jacobi identity follows by noting that $\lambda([u,v])=0$.
Take $e\in \sE$ such that $\lambda(e)=1$. Note that
$f\colon u\mapsto [u,e]+u=\lambda(u)e$. Now take $\mu\in \sE'$ such that $\mu(e)=1$. Then $\lambda = \mu\circ f$.
Thus, if the bracket were continuous in $u$, $\lambda$ would be continuous which is a contradiction. 
This gives an algebraic Lie bracket on $\sE$, hence a bracket on the sheaf of sections over the one-point base, but it is not a locally convex Lie algebra bracket and hence it does not satisfy the first-order condition.
\end{remark}

\begin{example}[Action Lie algebroids]\label{ex:action-lie-algbd}
Let $\mathfrak g$ be a locally convex Lie algebra (hence with smooth bracket), and let $\mathfrak g$ act smoothly on a locally convex manifold $M$.
Thus we have a smooth map $\rho\colon M\times \mathfrak g\to TM$ over $M$, linear in the second variable, such that $\fg\ni u\mapsto \rho(\cdot,u)\in \Gamma(TM)$ is a Lie algebra morphism.
Set $A:=M\times \mathfrak g$ and define the anchor by $\anch(x,u):=\rho(x,u)$.
For local sections $\xi,\eta\in C^\infty(U,\mathfrak g)$, define
\[
[\xi,\eta](x):=d\eta(x;\anch(x,\xi(x)))-d\xi(x;\anch(x,\eta(x)))+[\xi(x),\eta(x)]_{\mathfrak g}.
\]
Then $A$ is a first-order Lie algebroid.
The Leibniz rule is immediate from the formula.
The Jacobi identity is the standard one for the action Lie algebroid bracket.
Indeed, if $X_\xi:=\rho(\cdot,\xi(\cdot))$, then the assumption that
$u\mapsto \rho(\cdot,u)$ is a Lie algebra morphism implies, by the usual
chain-rule computation for $\fg$-valued functions, that $X_{[\xi,\eta]}=[X_\xi,X_\eta].$
Equivalently, the graph
$
\{(X_\xi,\xi):\xi\in C^\infty(U,\fg)\}
$
is a Lie subalgebra of the semidirect product Lie algebra
$
\Gamma(TU)\ltimes C^\infty(U,\fg),$ with the bracket \[
[(X,\xi),(Y,\eta)]
=
\bigl([X,Y],X.\eta-Y.\xi+[\xi,\eta]_{\fg}\bigr).
\]
Hence the Jacobi identity follows.
Obviously, the first-order term is simply $C(x,u,v)=[u,v]_{\mathfrak g}$.
\end{example}

Now we consider first-order current algebroids.
Current groupoids and algebroids were studied in~\cite{AmiriGloecknerSchmeding2020CurrentGroupoids}.
In Lemma~\ref{lemma:current-first-order}, we show that the current algebroid associated with a first-order Lie algebroid is first-order.
We shall use the construction of canonical mapping manifolds, superposition maps,  and the tangent bundle identification from the appendices of \cite{AmiriGloecknerSchmeding2020CurrentGroupoids} and the reference therein.

\begin{lemma}[First-order current algebroids]\label{lemma:current-first-order}
Let $K$ be a compact smooth manifold and let $p\colon A\to M$ be a first-order Banach Lie algebroid with typical fiber $\sF$. Assume that $M$ and $A$ admit local additions. Endow $C^\infty(K,M)$ and $C^\infty(K,A)$ with their canonical smooth manifold structures.
Then $p_*\colon C^\infty(K,A)\to C^\infty(K,M)$ is a first-order Fr\'echet-Lie algebroid.
\end{lemma}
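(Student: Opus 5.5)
Throughout, we use the canonical manifold structures together with the identifications of $C^\infty(K,A)\to C^\infty(K,M)$ as a vector bundle and of $T C^\infty(K,M)\cong C^\infty(K,TM)$ from the appendices of \cite{AmiriGloecknerSchmeding2020CurrentGroupoids}. The fiber over $\gamma\in C^\infty(K,M)$ is $\Gamma(\gamma^*A)$. We take as given (citing \cite{AmiriGloecknerSchmeding2020CurrentGroupoids}) that the current algebroid is a Lie algebroid, i.e.\ that Conditions~\ref{item:LA1} and~\ref{item:LA2} hold. The anchor is $\anch_*:=\anch\circ(\cdot)$, a smooth superposition map. The bracket is characterized pointwise in $K$: for sections $\Xi,\mathrm{H}$ over an open $\mathcal U\subseteq C^\infty(K,M)$ and $\gamma\in\mathcal U$, one evaluates at $k\in K$ using the bracket of $A$ applied to local extensions. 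It therefore remains to verify Condition~\ref{item:LA3}. We do this through the characterization in Lemma~\ref{lemma:characterization}, using the explicit formula~\eqref{eq:first-order} of $A$ transported to the mapping spaces.

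\emph{Step 1: trivializations.} Fix $\gamma_0$. Since $M$ has a local addition, there is a chart of $C^\infty(K,M)$ about $\gamma_0$ of the form $\gamma\mapsto \Sigma^{-1}\circ(\gamma_0,\gamma)$, with values in $\Gamma(\gamma_0^*TM)$. The bundle $A$ is Banach, so $\gamma_0^*A$ is locally trivial over the compact $K$. Using the local addition on $M$ (parallel-transport-type maps along the curves $t\mapsto\Sigma(t\,v)$), we build a bundle isomorphism $\Theta\colon C^\infty(K,A)|_{\mathcal U}\to \mathcal U\times \Gamma(\gamma_0^*A)$. Fiberwise it is given by a smooth family $\theta(\gamma(k))\colon A_{\gamma(k)}\to A_{\gamma_0(k)}$, linear in the fiber. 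Any other trivialization differs from this one by a smooth transition, and Remark~\ref{remark:coordinate-change-C} shows that smoothness of the $C$'s is preserved under such a change. It therefore suffices to check Condition~\ref{item:LA3} (via Lemma~\ref{lemma:characterization}) in trivializations of this special type. To handle arbitrary trivializations, we apply Lemma~\ref{lemma:characterization}(ii)(b),(c), which are intrinsic once one compares through the transition map.

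\emph{Step 2: the structure map.} Cover $K$ by finitely many compact sets $K_i$ on which $\gamma_0(K_i)$ lies in a trivializing domain $U_{\phi_i}$ of $A$. For $\gamma$ near $\gamma_0$, the set $\gamma(K_i)$ also lies in $U_{\phi_i}$, because the topology is compact-open. We define
\[
\mathcal C(\gamma,\sigma,\tau)(k):=\bigl[\Xi_\sigma,\Xi_\tau\bigr]^\Theta(\gamma)(k),
\]
where $\Xi_\sigma,\Xi_\tau$ are constant sections. Pointwise in $k$, \eqref{eq:first-order} in $\phi_i$, combined with the chain rule through $\theta$, expresses $\mathcal C(\gamma,\sigma,\tau)(k)$ as a finite sum of smooth expressions. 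These are $C_{\phi_i}$ evaluated at $(\gamma(k),\cdot,\cdot)$, together with terms $d_1$ of the transition maps in the direction $\anch(\cdot)$. Every ingredient is a smooth map of $(\gamma(k),\sigma(k),\tau(k))$, and none involves $k$-derivatives of $\gamma$. By the exponential law (the adjoint map on $\mathcal U\times\Gamma\times\Gamma\times K$ is smooth), $\mathcal C$ is smooth. It is also bilinear and alternating in $(\sigma,\tau)$. Conditions (b) and (c) of Lemma~\ref{lemma:characterization} are then checked pointwise in $k$. If $\zeta^\Theta(\gamma)=0$, then $\zeta(\gamma)(k)=0$ for all $k$. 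The corresponding conditions for $A$ then give $[\Xi_\sigma,\zeta]^\Theta(\gamma)(k)=(\anch_*(\Xi_\sigma).\zeta^\Theta)(\gamma)(k)$. Here we use that the directional derivative on $C^\infty(K,M)$ evaluated at $k$ equals the derivative of the evaluated map, which follows from the smoothness of $\ev$.

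\emph{Main obstacle.} We expect the main difficulty to be Step 2's link between the bracket on $C^\infty(K,A)$ and the pointwise bracket of $A$. Concretely, a section $\Xi$ over $\mathcal U$ does not induce a section of $A$ over an open subset of $M$; it is only a family parametrized by $\gamma$. The definition in \cite{AmiriGloecknerSchmeding2020CurrentGroupoids} must therefore be unwound into the form
\[
[\Xi,\mathrm H](\gamma)(k)=d\mathrm H(\gamma;\anch_*\Xi(\gamma))(k)-d\Xi(\gamma;\anch_*\mathrm H(\gamma))(k)+C_{\phi_i}\bigl(\gamma(k),\Xi(\gamma)(k),\mathrm H(\gamma)(k)\bigr)
\]
in local representatives. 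Our plan is to take this as the definition of the bracket in trivializations. We then verify that it is independent of the trivialization using Remark~\ref{remark:coordinate-change-C} applied pointwise, which is exactly the cocycle identity for the $C_{\phi_i}$. Once that is in place, the first-order formula holds by construction, and smoothness follows from the exponential law.
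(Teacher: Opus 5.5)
\textbf{Verdict: genuine gap.} Your final plan follows the paper's route: define the bracket on local sections of $p_*$ by the first-order formula with a structure map built pointwise in $k$ from that of $A$, glue by applying Remark~\ref{remark:coordinate-change-C} for each $k$, and get smoothness from the exponential law. But Condition~\ref{item:LA1} is never proved.

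The proposal is caught in a dilemma. Either you import a bracket from \cite{AmiriGloecknerSchmeding2020CurrentGroupoids}, or you define one yourself.
In the first case, you must still identify the imported bracket with the local first-order formula. That is exactly what your ``main obstacle'' paragraph leaves open. The term $d\mathrm H(\gamma;\anch_*\Xi(\gamma))(k)$ depends on the variation of all of $\gamma$, so the bracket is not computed pointwise from $A$ via local extensions. The pointwise checks of (b) and (c) of Lemma~\ref{lemma:characterization} in Step~2 then have nothing to rest on.
In the second case, which is what you finally propose, Condition~\ref{item:LA1} is a statement about your new bracket and cannot be cited. The paper takes this branch: it constructs the bracket for an arbitrary first-order Banach Lie algebroid, which need not be integrable, and then proves the axioms. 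Antisymmetry and the Leibniz rule are immediate from the formula. The Jacobi identity needs an argument, and the proposal supplies none.

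\textbf{How the paper supplies the missing argument.} Let $\Psi^k_\gamma(a)=(p(a),\Theta^\gamma_{k,p(a)}a)$ be the trivialization of $A$ near $\gamma(k)$ induced by the trivialization of $p_*$, and let $C^k_\gamma$ be its structure map. The structure map of $p_*$ is $C_\gamma(\eta,u,v)(k):=C^k_\gamma(\eta(k),u(k),v(k))$.
\begin{itemize}
\item Applying Lemma~\ref{lemma:anchsheaf} in each $\Psi^k_\gamma$ gives $\underline{\anch}[X,Y]=[\underline{\anch}X,\underline{\anch}Y]$.
\item This cancels all second-order terms in the local Jacobiator, leaving $\sum_{\mathrm{cyc}}\bigl(d_1C_\gamma(\eta,Y^\gamma,Z^\gamma;\underline{\anch}^\gamma(\eta,X^\gamma))+C_\gamma(\eta,X^\gamma,C_\gamma(\eta,Y^\gamma,Z^\gamma))\bigr)$.
\item Its value at $k$ is the Jacobiator of $A$ at $\eta(k)$ on three constant sections of $\Psi^k_\gamma$, hence zero.
\end{itemize}
This works only because $C_\gamma$ is, at each $k$, the structure map of $A$ in the same trivialization in which the derivative terms are taken.

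Your displayed formula with $C_{\phi_i}(\gamma(k),\Xi(\gamma)(k),\mathrm H(\gamma)(k))$ mixes two things: the trivialization of $p_*$ used for the derivative terms, and the trivializations $\phi_i$ of $A$. It becomes correct, and the Jacobi argument applies, once $\phi_i$ is replaced by $\Psi^k_\gamma$.

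\textbf{Two smaller points about Step~1.}
\begin{itemize}
\item The fiber maps of your $\Theta$ must depend on $(k,\gamma(k))$, not on $\gamma(k)$ alone.
\item ``Parallel transport'' presupposes a connection on $A$, whose existence on a Banach bundle is not guaranteed. The paper instead sets $\Theta_{k,x}=\sum_i\chi_i(k)\,\tau_{i,\gamma(k)}^{-1}\tau_{i,x}$, using a partition of unity $\{\chi_i\}$ on $K$ and local trivializations $\tau_i$ of $A$. It then uses openness of $\GL(\sF)$ to get invertibility near $\graph(\gamma)$.
\end{itemize}
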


\begin{proof}
Set $\underline{M}:=C^\infty(K,M)$ and $\underline{A}:=C^\infty(K,A)$. By~\cite[Section~4.3]{AmiriGloecknerSchmeding2020CurrentGroupoids}, $p_*\colon \underline{A}\to \underline{M}$ is a smooth vector bundle. For $\gamma\in \underline{M}$, we identify $p_*^{-1}(\gamma)$ with the Fr\'echet space $\Gamma(\gamma^*A)$. 
By~\cite[Theorem~A.12]{AmiriGloecknerSchmeding2020CurrentGroupoids}, we identify $T\underline{M}$ with $C^\infty(K,TM)$. Define $\underline{\anch}:=C^\infty(K,\anch)$. This is smooth by~\cite[Corollary~1.22]{AmiriGloecknerSchmeding2020CurrentGroupoids}. Explicitly, $\underline{\anch}(\sigma)(k)=\anch(\sigma(k))$. Thus $\underline{\anch}$ is a vector bundle morphism $\underline{A}\to T\underline{M}$ over $\id_{\underline{M}}$.

We now recall the construction of local trivializations for $p_*$. Let $\gamma\in\underline{M}$. Since $K$ is compact, choose finitely many local trivializations $\tau_i\colon A|_{U_i}\to U_i\times\sF$, open sets $R_i\subseteq K$ with $\gamma(\overline{R}_i)\subseteq U_i$, and a smooth partition of unity $\{\chi_i\}$ subordinate to $\{R_i\}$. Set $\Omega_i:=((K\setminus\supp(\chi_i))\times M)\cup(\gamma^{-1}(U_i)\times U_i)$. Then $\Omega_i$ is an open neighborhood of $\graph(\gamma)$.
On $\Omega_i$, define a smooth vector bundle map $\Theta_i\colon\pr_M^*A|_{\Omega_i}\to(\gamma\circ\pr_K)^*A|_{\Omega_i}$ as follows. On $\gamma^{-1}(U_i)\times U_i$, set $T_i(k,x):=\tau_{i,\gamma(k)}^{-1}\circ\tau_{i,x}$ and define $\Theta_i(k,x,e_x):=(k,x,\chi_i(k)T_i(k,x)e_x)$. On $(K\setminus\supp(\chi_i))\times M$, set $\Theta_i(k,x,e_x):=(k,x,0)$. The two definitions agree on the overlap because $\chi_i$ vanishes there. Hence $\Theta_i$ is smooth. On $\Omega:=\bigcap_i\Omega_i$, set $\Theta:=\sum_i\Theta_i$. Then $\Theta$ is a smooth vector bundle map and $\Theta_{k,\gamma(k)}=\id_{A_{\gamma(k)}}$ for all $k\in K$.
For each $k_0\in K$, choose a local trivialization $\tau\colon A|_U\to U\times\sF$ about $\gamma(k_0)$ and an open neighborhood $O\subseteq K$ of $k_0$ such that $O\times U\subseteq\Omega$. In this trivialization, $(k,x)\mapsto \tau_{\gamma(k)}\circ\Theta_{k,x}\circ\tau_x^{-1}$ is smooth as a map to $\mathcal{L}(\sF)$ by Lemma~\ref{lem:smooth-linear-expo}. It is the identity at $(k,\gamma(k))$. Since $\GL(\sF)$ is open in $\mathcal{L}(\sF)$, after shrinking $O$ and $U$, the map $\Theta_{k,x}$ is a linear isomorphism for all $(k,x)\in O\times U$. Thus there is an open neighborhood $W_\gamma\subseteq K\times M$ of $\graph(\gamma)$ such that $\Theta$ restricts to a smooth vector bundle isomorphism over $W_\gamma$.
Set $U_\gamma:=\{\eta\in \underline{M}:\graph(\eta)\subseteq W_\gamma\}$. We claim that $U_\gamma$ is open. Let $\eta\in U_\gamma$. Since $\graph(\eta)$ is compact and contained in $W_\gamma$, there are compact sets $C_j\subseteq K$, open sets $O_j\subseteq K$, and open sets $V_j\subseteq M$ such that $C_j\subseteq O_j$, the sets $C_j$ cover $K$, $\eta(C_j)\subseteq V_j$, and $O_j\times V_j\subseteq W_\gamma$ for all $j$. Then
\[
D:=\bigcap_j\{\wt\eta\in C^\infty(K,M):\wt\eta(C_j)\subseteq V_j\}
\]
is an open neighborhood of $\eta$ in the smooth compact-open topology, and $D\subseteq U_\gamma$. Hence $U_\gamma$ is open.
Define $\Psi_\gamma\colon p_*^{-1}(U_\gamma)\to U_\gamma\times\Gamma(\gamma^*A)$ by $\Psi_\gamma(\sigma)=(p\circ\sigma,\wh\sigma)$, where $\wh\sigma(k)=\Theta_{k,p(\sigma(k))}(\sigma(k))$. Its inverse is $\Psi_\gamma^{-1}(\eta,u)(k)=\Theta_{k,\eta(k)}^{-1}u(k)$. The smoothness of $\Psi_\gamma$ and its inverse follows from~\cite[Lemma~A.2 and Proposition~1.20]{AmiriGloecknerSchmeding2020CurrentGroupoids}. Thus $\Psi_\gamma$ is a local trivialization of $p_*$.

Let $\Psi_\delta$ be another such trivialization. On $U_\gamma\cap U_\delta$, the transition map is $(\eta,u)\mapsto(\eta,S_{\delta\gamma}(\eta,u))$, where
\[
S_{\delta\gamma}(\eta,u)(k)=\Theta^\delta_{k,\eta(k)}(\Theta^\gamma_{k,\eta(k)})^{-1}u(k).
\]
Again by~\cite[Lemma~A.2 and Proposition~1.20]{AmiriGloecknerSchmeding2020CurrentGroupoids}, $S_{\delta\gamma}$ is smooth and linear in the second variable.

For fixed $k\in K$, set $W_{\gamma,k}:=\{x\in M:(k,x)\in W_\gamma\}$. Define $\Psi_\gamma^k\colon A|_{W_{\gamma,k}}\to W_{\gamma,k}\times A_{\gamma(k)}$ by $\Psi_\gamma^k(a)=(p(a),\Theta^\gamma_{k,p(a)}a)$. This is a local trivialization of $A$ about $\gamma(k)$. Since $A$ is first-order, it has a structure map $C_\gamma^k$ in this trivialization. Thus $C_\gamma^k(x,u,v)$ is the value at $x$ of the bracket of the two $\Psi_\gamma^k$-constant sections with values $u$ and $v$.

Define $C_\gamma\colon U_\gamma\times\Gamma(\gamma^*A)\times\Gamma(\gamma^*A)\to\Gamma(\gamma^*A)$ by
\[
C_\gamma(\eta,u,v)(k):=C_\gamma^k(\eta(k),u(k),v(k)).
\]
This is alternating and bilinear in $u$ and $v$. We check that it is smooth. This is local on $K$. Fix $k_0\in K$ and choose a local trivialization $\tau\colon A|_U\to U\times\sF$ with $\gamma(k_0)\in U$. After shrinking an open neighborhood $O$ of $k_0$, assume $O\times U\subseteq W_\gamma$. Set $T(k,x):=\tau_{\gamma(k)}\circ\Theta^\gamma_{k,x}\circ\tau_x^{-1}$. Then $T(k,x)\in\GL(\sF)$ and $(k,x)\mapsto T(k,x)$ is smooth. In the trivialization induced by $\tau$, Remark~\ref{remark:coordinate-change-C} gives
\[
\begin{aligned}
C_\gamma^k(x,u,v)
=&\,T(k,x)C_\tau(x,T(k,x)^{-1}u,T(k,x)^{-1}v)\\
&-d_2T(k,x,T(k,x)^{-1}v;\anch^\tau(x,T(k,x)^{-1}u))\\
&+d_2T(k,x,T(k,x)^{-1}u;\anch^\tau(x,T(k,x)^{-1}v)).
\end{aligned}
\]
All terms are smooth in $(k,x,u,v)$. Hence the adjoint map $(\eta,u,v,k)\mapsto C_\gamma(\eta,u,v)(k)$ is smooth. By~\cite[Lemma~A.2]{AmiriGloecknerSchmeding2020CurrentGroupoids}, $C_\gamma$ is smooth.

Let $Q\subseteq\underline{M}$ be open, and let $X,Y\in\underline{\shA}(Q)$, where $\underline{\shA}$ is the sheaf of sections of $p_*\colon\underline{A}\to\underline{M}$. On $Q\cap U_\gamma$, write $X^\gamma=\pr_2\circ\Psi_\gamma\circ X$ and $Y^\gamma=\pr_2\circ\Psi_\gamma\circ Y$. Define
\[
B_\gamma(X,Y)(\eta):=\underline{\anch}(X).Y^\gamma(\eta)-\underline{\anch}(Y).X^\gamma(\eta)+C_\gamma(\eta,X^\gamma(\eta),Y^\gamma(\eta)).
\]
Then $\eta\mapsto\Psi_\gamma^{-1}(\eta,B_\gamma(X,Y)(\eta))$ is a smooth section over $Q\cap U_\gamma$.

We check that these local sections agree on overlaps. Let $\Psi_\delta$ be another local trivialization. Then $X^\delta(\eta)=S_{\delta\gamma}(\eta,X^\gamma(\eta))$ and $Y^\delta(\eta)=S_{\delta\gamma}(\eta,Y^\gamma(\eta))$. For fixed $k$, set $\wh{K}^k_{\delta\gamma}(x,w):=\Theta^\delta_{k,x}(\Theta^\gamma_{k,x})^{-1}w$. Applying Remark~\ref{remark:coordinate-change-C} pointwise in $k$ gives
\[
\begin{aligned}
&C_\delta(\eta,S_{\delta\gamma}(\eta,u),S_{\delta\gamma}(\eta,v))\\
&=S_{\delta\gamma}(\eta,C_\gamma(\eta,u,v))
-d_1S_{\delta\gamma}(\eta,v;\underline{\anch}^\gamma(\eta,u))
+d_1S_{\delta\gamma}(\eta,u;\underline{\anch}^\gamma(\eta,v)).
\end{aligned}
\]
Here $\underline{\anch}^\gamma$ is the local representative of $\underline{\anch}$. The chain rule gives
\[
\underline{\anch}(X).Y^\delta(\eta)
=d_1S_{\delta\gamma}(\eta,Y^\gamma(\eta);\underline{\anch}^\gamma(\eta,X^\gamma(\eta)))
+S_{\delta\gamma}(\eta,\underline{\anch}(X).Y^\gamma(\eta)).
\]
The same formula holds with $X$ and $Y$ interchanged. The derivative terms cancel. Hence
\[
B_\delta(X,Y)(\eta)=S_{\delta\gamma}(\eta,B_\gamma(X,Y)(\eta)).
\]
Thus the local sections glue. We denote the glued section by $[X,Y]_{\underline{\shA}(Q)}$.

The construction is compatible with restrictions. Hence $[\cdot,\cdot]_{\underline{\shA}}\colon\underline{\shA}\otimes_\mathbb{R}\underline{\shA}\to\underline{\shA}$ is a morphism of sheaves of $\mathbb{R}$-modules. It is $\mathbb{R}$-bilinear and alternating by construction.

The Leibniz rule is local. Let $f\in C^\infty(Q)$. Since $(fY)^\gamma(\eta)=f(\eta)Y^\gamma(\eta)$, we get
\[
B_\gamma(X,fY)(\eta)=(\underline{\anch}(X).f)(\eta)Y^\gamma(\eta)+f(\eta)B_\gamma(X,Y)(\eta).
\]
Thus $[X,fY]_{\underline{\shA}(Q)}=f[X,Y]_{\underline{\shA}(Q)}+(\underline{\anch}(X).f)Y$.

It remains to check the Jacobi identity. This is local. First, Lemma~\ref{lemma:anchsheaf} applied in each trivialization $\Psi_\gamma^k$ gives
\[
d_1\underline{\anch}^\gamma(\eta,v;\underline{\anch}^\gamma(\eta,u))
-d_1\underline{\anch}^\gamma(\eta,u;\underline{\anch}^\gamma(\eta,v))
=\underline{\anch}^\gamma(\eta,C_\gamma(\eta,u,v)).
\]
Hence $\underline{\anch}[X,Y]_{\underline{\shA}(Q)}=[\underline{\anch}(X),\underline{\anch}(Y)]$. Let $X,Y,Z\in\underline{\shA}(Q)$. In the trivialization $\Psi_\gamma$, the Jacobiator is
\[
\begin{aligned}
J_\gamma(X,Y,Z)(\eta)=\sum_{\mathrm{cyc}}\Big(
&d_1C_\gamma(\eta,Y^\gamma(\eta),Z^\gamma(\eta);\underline{\anch}^\gamma(\eta,X^\gamma(\eta)))\\
&+C_\gamma(\eta,X^\gamma(\eta),C_\gamma(\eta,Y^\gamma(\eta),Z^\gamma(\eta)))\Big).
\end{aligned}
\]
Evaluating at $k\in K$, this is the Jacobi identity for the original bracket on $A$, written in the local trivialization $\Psi_\gamma^k$, and tested on the constant local sections with values $X^\gamma(\eta)(k)$, $Y^\gamma(\eta)(k)$, and $Z^\gamma(\eta)(k)$. Hence $J_\gamma(X,Y,Z)(\eta)(k)=0$ for all $k$. Therefore $J_\gamma(X,Y,Z)=0$. Thus $[\cdot,\cdot]_{\underline{\shA}}$ defines a Lie bracket on every $\underline{\shA}(Q)$.

Finally, by construction, in the local trivialization $\Psi_\gamma$ one has
\[
[X,Y]^\gamma(\eta)=\underline{\anch}(X).Y^\gamma(\eta)-\underline{\anch}(Y).X^\gamma(\eta)+C_\gamma(\eta,X^\gamma(\eta),Y^\gamma(\eta)).
\]The map $C_\gamma$ is smooth and fiberwise alternating bilinear. Hence the current Lie algebroid is first-order in the trivializations $\Psi_\gamma$. Since the $\Psi_\gamma$ form a local trivializing cover, the coordinate-change formula of Remark~\ref{remark:coordinate-change-C} gives the first-order condition in every local trivialization. Therefore $p_*\colon C^\infty(K,A)\to C^\infty(K,M)$ is a first-order Fr\'echet-Lie algebroid.
\end{proof}

\begin{corollary}[Current action algebroid]
Let $K$ be a compact smooth manifold and let $\mathfrak{g}$ be a Banach-Lie algebra with smooth bracket acting on a Banach manifold $M$. Assume that $M$ admits a local addition. Then $C^\infty(K,M)\times C^\infty(K,\mathfrak{g})\to C^\infty(K,M)$ is a first-order Fr\'echet-Lie algebroid.
\end{corollary}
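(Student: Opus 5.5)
The plan is to reduce the corollary to Lemma~\ref{lemma:current-first-order} applied to the action Lie algebroid of Example~\ref{ex:action-lie-algbd}. Set $A:=M\times\mathfrak g\to M$, with anchor $\anch(x,u)=\rho(x,u)$ and the bracket of Example~\ref{ex:action-lie-algbd}. That example shows $A$ is a first-order Lie algebroid with structure map $C(x,u,v)=[u,v]_{\mathfrak g}$ in the global trivialization. Since $\mathfrak g$ is Banach and $M$ is a Banach manifold, $A$ is a first-order Banach Lie algebroid with typical fiber $\mathfrak g$. Here I read ``Banach--Lie algebra with smooth bracket acting on $M$'' in the sense of Example~\ref{ex:action-lie-algbd}: a smooth map $\rho$, linear in the second variable, whose induced map $\mathfrak g\to\Gamma(TM)$ is a Lie algebra morphism.

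The remaining hypothesis of Lemma~\ref{lemma:current-first-order} is that $A$ admits a local addition. Let $\Sigma_M\colon TM\supseteq\Omega\to M$ be a local addition on $M$. The addition $\Sigma_{\mathfrak g}(u,w)=u+w$ is a local addition on $\mathfrak g$ defined on all of $T\mathfrak g=\mathfrak g\times\mathfrak g$. I will take the product $\Sigma_M\times\Sigma_{\mathfrak g}$, defined on $\Omega\times(\mathfrak g\times\mathfrak g)\subseteq T(M\times\mathfrak g)$. Two properties must be checked. It restricts to the identity on the zero section, which is immediate. The map $(\pi_{TA},\Sigma_A)$ is a diffeomorphism onto an open neighbourhood of the diagonal in $A\times A$; this holds because, after reordering factors, it is the product of the corresponding diffeomorphisms for $M$ and for $\mathfrak g$. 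So $A$ admits a local addition.

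Lemma~\ref{lemma:current-first-order} now shows that $C^\infty(K,A)\to C^\infty(K,M)$ is a first-order Fr\'echet--Lie algebroid. To obtain the stated form, I identify $C^\infty(K,M\times\mathfrak g)$ with $C^\infty(K,M)\times C^\infty(K,\mathfrak g)$ as manifolds. The map $\sigma\mapsto(\pr_M\circ\sigma,\pr_{\mathfrak g}\circ\sigma)$ is a bijection, and it is smooth with smooth inverse by the canonical-structure (exponential law) characterisation of smooth maps into mapping manifolds. Under this identification $p_*$ becomes $\pr_1$, so we get a trivial bundle with fiber $C^\infty(K,\mathfrak g)\cong\Gamma(\gamma^*A)$.

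I expect the main point requiring care to be this last identification, together with the verification that the product local addition is genuinely a local addition in the sense needed by the canonical manifold structure. The rest is a direct invocation of earlier results. As an optional check, one could describe the resulting bracket explicitly: in the global trivialization, Lemma~\ref{lemma:current-first-order} should give the pointwise bracket $C(\eta,u,v)(k)=[u(k),v(k)]_{\mathfrak g}$. This would exhibit the current algebroid as the action algebroid of the Fr\'echet--Lie algebra $C^\infty(K,\mathfrak g)$ acting on $C^\infty(K,M)$.
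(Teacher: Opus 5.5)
Your proposal is correct and follows the paper's proof: the action algebroid $M\times\mathfrak g\to M$ is first-order by Example~\ref{ex:action-lie-algbd}, $M\times\mathfrak g$ inherits a local addition from $M$, and Lemma~\ref{lemma:current-first-order} then applies. Your explicit product local addition and the exponential-law identification $C^\infty(K,M\times\mathfrak g)\cong C^\infty(K,M)\times C^\infty(K,\mathfrak g)$ just spell out steps that the paper leaves implicit.
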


\begin{proof}
The action Lie algebroid $M\times\mathfrak{g}\to M$ is first-order by Example~\ref{ex:action-lie-algbd}. Since $M$ admits a local addition, so does $M\times\mathfrak{g}$. The result follows from Lemma~\ref{lemma:current-first-order}.
\end{proof}

Finally we show that the Lie algebroid associated with a Lie groupoid is first-order.

\begin{theorem}\label{thm:groupoid-algebroid-first-order}
Let $\mathcal{G}\tto M$ be a  Lie groupoid.
Then $\Lie(\mathcal{G})$ 
is a first-order Lie algebroid.
\end{theorem}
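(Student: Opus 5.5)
Our approach is to verify conditions \ref{item:LA1}–\ref{item:LA3} of Definition~\ref{def:liealgebroid} for $A:=\Lie(\G)=\one^*\ker(T\src)$ with anchor $\anch=T\tgt|_A$ and bracket $[\xi,\eta]=[\ora\xi,\ora\eta]\circ\one$. The Lie algebroid axioms \ref{item:LA1} and \ref{item:LA2} are the standard ones. Right-invariance is preserved by the bracket, since $R_g$ is a diffeomorphism between source fibers and $[\ora\xi,\ora\eta]$ is $\src$-vertical. Hence $\xi\mapsto\ora\xi$ is injective and its image is closed under the bracket, and Jacobi follows from Jacobi for vector fields on the open set $\tgt^{-1}(U)$. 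This holds even though $\G$ may be non-Hausdorff, because everything is local. For the Leibniz rule we use $\ora{f\eta}=(f\circ\tgt)\ora\eta$. Then $[\ora\xi,(f\circ\tgt)\ora\eta]=(f\circ\tgt)[\ora\xi,\ora\eta]+(\ora\xi.(f\circ\tgt))\ora\eta$, and $\ora\xi.(f\circ\tgt)=(\anch(\xi).f)\circ\tgt$ because $\tgt\circ R_g=\tgt$ on $\G_{\tgt(g)}$.

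The substance is \ref{item:LA3}. By Lemma~\ref{lemma:characterization} it suffices to show three things in a fixed trivialization $\phi$ of $A|_U$: the map $C_\phi(x,u,v)=[\xi_u,\xi_v]^\phi(x)$ is smooth, the identity $[\xi_u,\zeta]^\phi(x)=\anch(\xi_u).\zeta^\phi(x)$ holds when $\zeta^\phi(x)=0$, and $[\zeta_1,\zeta_2]^\phi(x)=0$ when both sections vanish at $x$. We would instead prove the first-order formula \eqref{eq:first-order} directly.

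\textbf{Local model near the unit.} Near $1_x$, choose submersion charts for $\src$ in which $\G$ looks like $V\subseteq\sM\times\sC$, with $\src=\pr_1$ and the unit section $y\mapsto(y,0)$, after translating. Shrinking, we may take $\sC\cong\sF_\phi$ via the trivialization of $\ker T\src$ restricted to units. In these coordinates, right-invariant extensions are given by the smooth map $\Xi(g,u):=T_{1_{\tgt(g)}}R_g(\phi^{-1}(\tgt(g),u))$, defined on $\tgt^{-1}(U)\times\sF_\phi$ and linear in $u$. Smoothness of $\Xi$ follows from smoothness of $\mult$ on the split submanifold $\mset{\G}$: differentiate $\mult(h,g)$ in $h$ along $\src$-vertical directions at $h=1_{\tgt(g)}$. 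Then $\ora\xi(g)=\Xi(g,\xi^\phi(\tgt(g)))$.

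\textbf{Computing the bracket at the unit.} In the chart we compute
\[
[\ora\xi,\ora\eta](1_x)=d\ora\eta(1_x;\ora\xi(1_x))-d\ora\xi(1_x;\ora\eta(1_x)).
\]
The chain rule expands $d\ora\eta(1_x;\ora\xi(1_x))$ into $d_1\Xi(1_x,\eta^\phi(x);\ora\xi(1_x))$ plus $\Xi(1_x,d\eta^\phi(x;T\tgt\,\ora\xi(1_x)))$. The second term is $\anch(\xi).\eta^\phi(x)$, because $\Xi(1_x,\cdot)$ is the inclusion of the fiber $\sF_\phi$ and $T\tgt\,\ora\xi(1_x)=\anch(\xi(x))$. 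The first term depends only on $\xi^\phi(x)$ and $\eta^\phi(x)$; it is linear in $\eta^\phi(x)$, and also in $\xi^\phi(x)$ since $\ora\xi(1_x)=\Xi(1_x,\xi^\phi(x))$. We therefore set
\[
C_\phi(x,u,v):=d_1\Xi(1_x,v;\Xi(1_x,u))-d_1\Xi(1_x,u;\Xi(1_x,v)),
\]
composed with the chart identification back to $\sF_\phi$, i.e.\ projection to the vertical component. This map is smooth (it is built from $\one$, $\Xi$ and $d_1\Xi$), alternating, and bilinear. Since the resulting vector is vertical at $1_x$, only the $\sC$-component matters, which gives \eqref{eq:first-order}.

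\textbf{Main obstacle.} We expect the hardest step to be making the model rigorous in Bastiani calculus. This means showing that $\Xi$ is smooth jointly in $(g,u)$, and that the bracket of vector fields in a chart is given by the derivative formula above for these non-Banach vector fields. The plan is to express $\Xi$ as $T\mult$ restricted to $\{0_{1_{\tgt(g)}}\}$-based vertical vectors paired with $0_g$, i.e.\ $\Xi(g,u)=T\mult(\phi^{-1}(\tgt(g),u),0_g)$. Here $T\mset{\G}$ is the tangent of the split submanifold, and $T\mult$ is smooth and fiberwise linear. With this representation, smoothness and linearity in $u$ are automatic.
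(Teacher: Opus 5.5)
Your proposal is correct and is essentially the paper's proof. The paper likewise obtains \ref{item:LA1}--\ref{item:LA2} from right-invariant extensions, using $\ora{f\xi}=(\tgt^*f)\ora\xi$. For \ref{item:LA3} it writes $\ora\xi$ in a source-adapted chart at $\one_x$ through the derivative of right multiplication at the unit; its $R_\phi=d_3\nu$ plays the role of your $\Xi$. The chain rule then gives $C_\phi(y,u,w)=d_2R_\phi(y,0,w;u)-d_2R_\phi(y,0,u;w)$, which is exactly your $C_\phi$.

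The only minor differences are two. You handle an arbitrary trivialization $\phi$ directly, whereas the paper computes only in the chart-induced trivialization. You also justify smoothness of $\Xi$ via $T\mult$ on $T\mset{\G}$. Both are fine.
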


\begin{proof}
For an open set $U\subseteq M$,
let $\shR(U)$ be the space of smooth sections of $\ker(T\src)$ over $\tgt^{-1}(U)$
which are right-invariant.
The $\shcinf_M(U)$-module structure is given by $(f\cdot X)(g):=f(\tgt(g))X(g)$.
If $\xi\in \Gamma(\Lie(\mathcal{G})|_U)$,
define its right-invariant extension by
$\ora{\xi}(g):=T_{\one(\tgt(g))}R_g(\xi(\tgt(g)))$, for $g\in \tgt^{-1}(U)$.
This gives an isomorphism of sheaves of $\shcinf_M$-modules
$\Gamma(\Lie(\mathcal{G})|-)\to \shR$, with inverse $X\mapsto X\circ \one$.
Because each right translation is a diffeomorphism preserving $\ker(T\src)$,
the bracket of two right-invariant $\src$-vertical vector fields
is again right-invariant and $\src$-vertical.
Transporting the bracket by the above isomorphism
yields a sheaf morphism $\LB\colon \Gamma(\Lie(\mathcal{G})|-)\times \Gamma(\Lie(\mathcal{G})|-)\to \Gamma(\Lie(\mathcal{G})|-)$.
The Jacobi identity and antisymmetry come from the     usual Lie bracket of vector fields.
The Leibniz rule is also immediate.
Indeed, $\ora{f\xi}=(\tgt^*f)\ora{\xi}$,
and hence $$[\ora{\xi},\ora{f\eta}]=(\tgt^*f)[\ora{\xi},\ora{\eta}]+(\ora{\xi}.\tgt^*f)\,\ora{\eta}.$$
Restricting along the unit section gives $[\xi,f\eta]=f[\xi,\eta]+(\anch(\xi).f)\eta$.
It remains to verify the local first-order formula.
Fix $x\in M$.
Since $\src\colon \mathcal{G}\to M$ is a submersion
and $\one\colon M\to \mathcal{G}$ is a section,
after shrinking we may choose a source-adapted chart $\chi\colon W\to U\times V$ about $\one(x)$
such that $\chi(\one(y))=(y,0)$ and $\src\circ \chi^{-1}(y,v)=y$, where $V$ is an open subset of a locally convex space $\sF$.
This chart identifies $\Lie(\mathcal{G})|_U$ with the trivial bundle $U\times \sF$;
let $\phi$ denote the corresponding trivialization.
Write $\tau:=\tgt\circ \chi^{-1}\colon U\times V\to M$.
Then $\tau(y,0)=y$,
and in the trivialization $\phi$ the anchor is $\anch^\phi(y,u)=d_2\tau(y,0;u)\in T_yU$.
Locally we can write the multiplication as $(y,\nu(y,v,w)):=\chi(\chi^{-1}(\tau(y,v),w)\chi^{-1}(y,v))$.
Then locally the derivative of right multiplication along
the $\one$ is given by $R_\phi(y,v,u):=d_3\nu(y,v,0;u)$.
Immediately, $R_\phi(y,0,u)=u$.
This map has the property that, 
for every local section $\xi\in \Gamma(\Lie(\mathcal{G})|_U)$,
the corresponding right-invariant vector field has local expression
$\ora{\xi}^\chi(y,v)=(0,R_\phi(y,v,\xi^\phi(\tau(y,v))))$.
Therefore, at a point $(y,0)$,
the vertical part of the bracket is obtained by differentiating only in the second variable.
Using the chain rule and the identity $R_\phi(y,0,u)=u$, one gets
\[ [\xi,\eta]^\phi(y)=\anch(\xi).\eta^\phi(y)-\anch(\eta).\xi^\phi(y)+C_\phi(y,\xi^\phi(y),\eta^\phi(y)), \]
where $C_\phi(y,u,w):=d_2R_\phi(y,0,w;u)-d_2R_\phi(y,0,u;w)$.
Since $R_\phi$ is smooth and linear in the third variable,
$C_\phi$ is smooth and $(u,w)\mapsto C_\phi(y,u,w)$ is alternating bilinear.
Thus Definition~\ref{def:liealgebroid} is satisfied.
\end{proof}

\subsection*{The sheaf $\Omega^\bullet_A(-,E)$}
Before defining morphisms of Lie algebroids, we first consider sheaves of forms with values in a locally convex vector bundle. Associated with these sheaves is a well-defined differential, which can be used to generalize the definition of Lie algebroid morphisms in terms of pullbacks of forms, without referring to any topology on spaces of linear maps.

\begin{definition}
Let $p\colon A\to M$ be a Lie algebroid.
An \emph{$A$-connection} on a vector bundle $q\colon E\to M$,
with a sheaf of section $\shE$, 
is a morphism of sheaves of $\R$-modules $\nabla\colon \shA\otimes_{\R} \shE\to \shE$
which is $\shcinf_M$-linear in the first argument,
additive in the second,
and satisfies $\nabla_\xi(f\sigma)=f\nabla_\xi\sigma+(\anch(\xi).f)\sigma$
for all open $U\subseteq M$, $\xi\in \shA(U)$, $\sigma\in \shE(U)$, and $f\in \shcinf_M(U)$.
We moreover require the following local first-order condition.
If $\phi\colon A|_U\to U\times \sF_\phi$ and $\psi\colon E|_U\to U\times \sF_\psi$
are local trivializations over the same open set $U$,
then there exists a smooth map $B_{\phi\psi}\colon U\times \sF_\phi\times \sF_\psi\to \sF_\psi$
such that $(u,v)\mapsto B_{\phi\psi}(x,u,v)$ is bilinear for each $x\in U$, and
\[ (\nabla_\xi\sigma)^{\phi\psi}(x)=\anch(\xi).\sigma^\psi(x)+B_{\phi\psi}(x,\xi^\phi(x),\sigma^\psi(x)) \]
for all $\xi\in \shA(U)$ and $\sigma\in \shE(U)$.
The curvature of $\nabla$ is the morphism $R_\nabla\colon \shA\otimes_{\R} \shA\otimes_{\R} \shE\to \shE$
given by $R_\nabla(\xi,\eta)\sigma:=\nabla_\xi\nabla_\eta\sigma-\nabla_\eta\nabla_\xi\sigma-\nabla_{[\xi,\eta]}\sigma$.
If $R_\nabla=0$, then $\nabla$ is called \emph{flat}.
\end{definition}

\begin{remark}
Let $\phi \colon A|_U \to U \times \sF_\phi$ and $\wt\phi \colon A|_{\wt U} \to \wt U \times \sF_{\wt\phi}$ be local trivializations of $A$. Let $\psi \colon E|_U \to U \times \sF_\psi$ and $\wt\psi \colon E|_{\wt U} \to {\wt U} \times \sF_{\wt\psi}$ be local trivializations of $E$. Set
$
(\wt\phi \circ \phi^{-1})(x,u) = (x,\wh K(x,u))$, and $
(\wt\psi \circ \psi^{-1})(x,e) = (x,\wh G(x,e)),
$
where the maps $\wh K$ and $\wh G$ are smooth and fiberwise linear. Let $\anch^{\phi}$ be the local representative of the anchor in the trivialization $\phi$. Then, on $U \cap \wt U$,
 we have
\[
B_{\wt\phi\wt\psi}(x,\wh K(x,u),\wh G(x,e))
=
\wh G(x,B_{\phi,\psi}(x,u,e))
- d_1\wh G(x,e;\anch^\phi(x,u)).
\]
This follows by writing $(\nabla_\xi \sigma)^{\wt\phi\wt\psi}$ in the two systems of trivializations, using $\sigma^{\wt\psi}(x)=\wh G(x,\sigma^\psi(x))$, and comparing the two formulas.
\end{remark}

Let $E\to M$ be a vector bundle with an $A$-connection $\nabla$.
For $k\in \N_0$ and an open set $U\subseteq M$,
define $\Omega_A^k(U,E)$ to be the space of maps $x\mapsto \alpha_x$,
where each $\alpha_x$ 
is a continuous alternating $k$-linear map
$A_x^k\to E_x$,
such that for every $x\in U$, there exist
 local trivializations $\phi\colon A|_{U_\phi}\to U_\phi\times\sF_\phi$ and $\psi\colon E|_{U_\psi}\to U_\psi\times \sF_\psi$
with $x\in U_\phi=U_\psi\subseteq U$ for which the local representative
$ \alpha_{\phi\psi}\colon  U_\phi\times \sF_\phi^k\to \sF_\psi$ 
 given by
\[ (y,v_1,\ldots,v_k)\mapsto \pr_2\Bigl(\psi\bigl(\alpha_y(\phi^{-1}(y,v_1),\ldots,\phi^{-1}(y,v_k))\bigr)\Bigr), \]
is smooth. 
Equivalently, if this holds for one pair of local trivializations around x, it holds for every such pair, by the smoothness and fiberwise linearity of the transition maps.
Here we set $\Omega^0_A(-,E)=\Gamma(E|-).$
The usual restriction maps make $\Omega_A^k(-,E)$ into a sheaf of $\shcinf_M$-modules.
For $\alpha\in \Omega_A^k(U,E)$ and $\xi_j\in \shA(U)$,
define $\alpha(\xi_1,\ldots,\xi_k)\in \shE(U)$ by $x\mapsto \alpha_x(\xi_1(x),\ldots,\xi_k(x))$.
This yields the evaluation morphism $\operatorname{Ev}\colon \Omega_A^k(-,E)\times \shA^k\to \shE$.
For $\alpha\in \Omega_A^k(U,E)$,
define $d_{A,\nabla}\alpha\in \Omega_A^{k+1}(U,E)$ by setting
\begin{equation}\label{eq:dAnabla} \begin{split} 
(d_{A,\nabla}\alpha)_x(a_0,\ldots,a_k) &= \sum_{i=0}^k (-1)^i \nabla_{\xi_i}\bigl(\alpha(\xi_0,\ldots,\wh\xi_i,\ldots,\xi_k)\bigr) (x) \\
&+\sum_{0\leq i<j\leq k} (-1)^{i+j}\alpha\bigl([\xi_i,\xi_j],\xi_0,\ldots,\wh\xi_i,\ldots,\wh\xi_j,\ldots,\xi_k\bigr)(x),
\end{split}
\end{equation}
for $x\in U$,
$a_i\in A_x$, and for any local sections 
$\xi_i  \in \shA(V)$ such that $\xi_i(x) = a_i$ where
$V\subseteq U$ is open. This makes sense since one can take $V=U\cap U_\phi$ where $\phi$ is a local trivialization, consider constant sections $\xi_i(y)=\phi^{-1}(y,\phi_x(a_i))$ and use the fact that 
$\nabla$ is a sheaf morphism.

\begin{lemma}
The operator $d_{A,\nabla}\colon \Omega_A^k(-,E)\to \Omega_A^{k+1}(-,E)$ is well-defined.
If $\nabla$ is flat, then $d_{A,\nabla}^2=0$.
\end{lemma}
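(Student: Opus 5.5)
The plan is to work entirely in local trivializations $\phi$ of $A$ and $\psi$ of $E$ over a common open set $U_\phi=U_\psi$. There I will write down an explicit local formula for the right-hand side of \eqref{eq:dAnabla}. This one formula will give independence of the choice of extensions, fiberwise continuity and multilinearity, and smoothness of the local representative. Only flatness and Jacobi are then needed for $d_{A,\nabla}^2=0$.

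\textbf{Step 1: a local formula.} Let $\xi_0,\dots,\xi_k\in\shA(V)$ with $\xi_i(x)=a_i$. Substitute the first-order formula \eqref{eq:first-order} for each bracket $[\xi_i,\xi_j]^\phi$. Substitute the first-order formula for $\nabla$ for each $\nabla_{\xi_i}$. Expand $\anch(\xi_i).\bigl(\alpha_{\phi\psi}(y,\xi_0^\phi(y),\dots,\wh{\xi_i^\phi}(y),\dots)\bigr)$ by the chain rule. This yields three kinds of terms:
\begin{itemize}
\item a term $d_1\alpha_{\phi\psi}(x,\dots;\anch^\phi(x,u_i))$, where $u_i=\phi_x(a_i)$;
\item terms in which $\alpha_{\phi\psi}$ is applied to a derivative $\anch(\xi_i).\xi_j^\phi(x)$;
\item zeroth-order terms built from $B_{\phi\psi}$ and $C_\phi$.
\end{itemize}
The derivative terms $\anch(\xi_i).\xi_j^\phi$ from the $\nabla$-part cancel pairwise against the terms $\anch(\xi_i).\xi_j^\phi-\anch(\xi_j).\xi_i^\phi$ from the bracket part. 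This uses the signs $(-1)^i$, $(-1)^{i+j}$ and the alternation of $\alpha$, exactly as in the classical computation. What remains is
\[
(d_{A,\nabla}\alpha)_{\phi\psi}(x,u_0,\dots,u_k)=\sum_i(-1)^i\Bigl(d_1\alpha_{\phi\psi}(x,u_0,..\wh{u_i}..,u_k;\anch^\phi(x,u_i))+B_{\phi\psi}(x,u_i,\alpha_{\phi\psi}(x,..\wh{u_i}..))\Bigr)+\sum_{i<j}(-1)^{i+j}\alpha_{\phi\psi}(x,C_\phi(x,u_i,u_j),..\wh{u_i}..\wh{u_j}..).
\]
This expression depends only on the $u_i$. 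Hence the definition is independent of the extensions $\xi_i$ and of $V$; the sheaf property is what allows the restriction to $U\cap U_\phi$.

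\textbf{Step 2: properties of the formula.} The right-hand side is visibly smooth in $(x,u_0,\dots,u_k)$, being a composition of smooth maps including $d_1\alpha_{\phi\psi}$. It is multilinear, because $d_1\alpha_{\phi\psi}(x,\cdot;h)$ is multilinear and linear in $h$, and $\anch^\phi$, $B$, $C$ are multilinear. It is alternating by the standard sign argument. Continuity in the fiber follows from smoothness. Therefore $d_{A,\nabla}\alpha\in\Omega_A^{k+1}(U,E)$, and the operation commutes with restrictions.

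\textbf{Step 3: $d_{A,\nabla}^2=0$.} To evaluate $(d^2\alpha)_x(a_0,\dots,a_{k+1})$, I will extend the $a_i$ to local sections $\xi_i$. Step 1 allows evaluating $d(d\alpha)$ on arbitrary local sections: $d\alpha(\eta_0,\dots,\eta_k)$ as a section equals the global formula \eqref{eq:dAnabla} evaluated on the sections $\eta_i$, pointwise. So the purely algebraic Chevalley--Eilenberg computation is valid on $\shA(V)$ and $\shE(V)$. The computation uses the Jacobi identity (\ref{item:LA1}), the Leibniz rule, and $R_\nabla=0$, and gives zero. A subtle point is that one needs $(d\alpha)(\eta_0,\dots)=$ formula for \emph{all} sections, not just constant ones. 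This follows because Step 1 shows the formula is pointwise determined by the values $\eta_i(x)$.

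I expect the main obstacle to be the bookkeeping in Step 1, namely verifying the cancellation of all derivative-of-section terms with correct signs. I would first treat $k=1$ explicitly and then argue the general sign pattern by the standard induction/antisymmetry argument. An alternative is to check independence directly via Lemma~\ref{lemma:characterization}(b),(c): vanishing sections contribute only through $\anch(\xi_i).\zeta^\phi$ terms, and these cancel.
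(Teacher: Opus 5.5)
Your proposal is correct and follows essentially the paper's route. You expand in local trivializations and cancel the derivative-of-section terms, which yields a local formula depending only on the $u_i$; this gives both independence of the extensions and smoothness of the local representative. You then obtain $d_{A,\nabla}^2$ as a curvature expression by the standard algebraic computation. Your explicit remark that $d_{A,\nabla}\alpha$ satisfies \eqref{eq:dAnabla} on arbitrary local sections, not just constant ones, is exactly what the paper uses implicitly when it substitutes $d_{A,\nabla}\alpha$ back into \eqref{eq:dAnabla}.
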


\begin{proof}
	Let $x\in M$ and
	choose local trivializations $\phi\colon A|U\to U\times \sF_{\phi}$ and $\psi\colon E|_U\to U\times \sF_{\psi}$ 
    over
	the same open neighborhood of $x$. For any $a_j\in A_x$,
	set $u_j := \pr_2(\phi(a_j))$ and take $\xi_i\in \shA(U)$ 
    (these can be the constant sections) 
    such that 
	$\xi_i(x) = a_i$.
	In~\eqref{eq:dAnabla}, the first sum is over terms of the form:
	\[\begin{split} 
		(\nabla_{\xi_i}\alpha &(\xi_0,\ldots, \wh \xi_i, \ldots, \xi_k))^{\phi\psi}(x) \\&= \anch(\xi_i).(\alpha_{\phi\psi}(\cdot, u_0,\ldots, \wh {u_i},\ldots, u_k ))(x)\\ &  
		 + B_{\phi\psi}(x, u_i, 
		\alpha_{\phi\psi}(x, u_0,\ldots, \wh u_i, \ldots, u_k)
		)\\
		&+ \sum_{j< i}^k (-1)^j \alpha_{\phi\psi}(x,\anch(\xi_i).\xi_j^\phi(x),u_0,\ldots, \wh u_i, \ldots, \wh u_j, \ldots, u_k),
        \\
		&- \sum_{j> i}^k (-1)^j \alpha_{\phi\psi}(x,\anch(\xi_i).\xi_j^\phi(x),u_0,\ldots, \wh u_i, \ldots, \wh u_j, \ldots, u_k),
	\end{split}\]
	while the second sum involves terms of the form:
	\[\begin{split} 
		\alpha_{\phi\psi}(x, 
		\anch(\xi_i).\xi_j^\phi - \anch(\xi_j).\xi_i^\phi + 
		C_{\phi}(x, u_i, u_j), u_0, \ldots, \wh u_i, \ldots, \wh u_j,\ldots, u_k)
		).
	\end{split} 
	\]
	Thus
    \eqref{eq:dAnabla} becomes:
	\begin{equation}\label{eq:dAnablalocal}
		\begin{split}
			&(d_{A,\nabla}\alpha)_{\phi\psi} (x,u_0,\ldots, u_k) \\
			&=\sum_{i=0}^k
            (-1)^i  \anch(\xi_i).(\alpha_{\phi\psi}(\cdot, u_0,\ldots, \wh {u_i},\ldots, u_k ))(x)  \\ &\quad  
			+   \sum_{i=0}^k (-1)^i
			B_{\phi\psi}(x, u_i,
			\alpha_{\phi\psi}(x, u_0,\ldots, \wh u_i, \ldots, u_k))
             \\
			&\quad +\sum_{ i< j }  (-1)^{i+j} \alpha_{\phi\psi}(x, 
			C_{\phi}(x, u_i, u_j), u_0, \ldots, \wh u_i, \ldots, \wh u_j,\ldots, u_k)
			).
	\end{split} \end{equation}
    For each $i$, the term $\anch(\xi_i).(\alpha_{\phi\psi}(\cdot, u_0,\ldots, \wh {u_i},\ldots, u_k ))(x)$ depends on $\xi_i(x)=a_i$. Hence
	the right hand side of~(\ref{eq:dAnablalocal})  is independent of the sections $\xi_i$ and is well-behaved with
	respect to restricting to an open set $V\subseteq U$.
	Additionally, the smoothness of $(d_{A,\nabla}\alpha)_{\phi\psi}$ is
	immediate.
	Hence, $d_{A,\nabla}\alpha$ is again an element of
    $\Omega^{k+1}_A(U,E)$.

	For the second statement, note that, if we replace $\alpha$ with $d_{A,\nabla}\alpha$ in~(\ref{eq:dAnabla}) and simplify, we get
	\begin{equation*}
		(d^2_{A,\nabla}\alpha)(\xi_0,\ldots, \xi_{k+1}) =
		\sum_{i<j}(-1)^{i+j} R_\nabla(\xi_i,\xi_j)
		\alpha(\xi_0, \ldots, \wh\xi_i, \ldots, \wh\xi_j, \ldots, \xi_{k+1}).
	\end{equation*}
    Hence $d_{A,\nabla}^2=0$ if $R_{\nabla}=0$.
\end{proof}

A vector bundle with a flat $A$-connection is called a \emph{representation} of $A$.
For every representation $(E,\nabla)$ of $A$,
the graded sheaf $\Omega^\bullet_A(-,E):=\bigoplus_{k\in \N_0}\Omega_A^k(-,E)$,
together with $d_{A,\nabla}$, is a sheaf cochain complex.
If one takes $E=M\times \R$ with the trivial flat $A$-connection,
one obtains the sheaf of scalar-valued $A$-forms.
We write $\Omega_A^k$ for this sheaf.
Thus $\omega\in \Omega_A^k(U)$ means that each $\omega_x$
is a continuous alternating $k$-linear form on $A_x$
and, in any local trivialization $\phi$ of $A$, the map
\[ \omega_\phi\colon (U\cap U_\phi)\times \sF_\phi^k\to \R,\quad (x,v_1,\ldots,v_k)\mapsto \omega_x(\phi^{-1}(x,v_1),\ldots,\phi^{-1}(x,v_k)), \]
is smooth.
For $\omega\in \Omega_A^k(U)$, the differential is
\begin{equation} \label{eq:dA}
\begin{split}
(d_A\omega)(\xi_0,\ldots,\xi_k) &= \sum_{j=0}^k (-1)^j \anch(\xi_j)\bigl(\omega(\xi_0,\ldots,\wh\xi_j,\ldots,\xi_k)\bigr) \\
&+\sum_{0\leq i<j\leq k} (-1)^{i+j}\omega\bigl([\xi_i,\xi_j],\xi_0,\ldots,\wh\xi_i,\ldots,\wh\xi_j,\ldots,\xi_k\bigr). 
\end{split}
\end{equation}
If $\alpha\in \Omega_A^k(U)$ and $\beta\in \Omega_A^l(U)$,
define their wedge product by
\[ (\alpha\wedge \beta)_x(a_1,\ldots,a_{k+l}) := \sum_{s\in \mathfrak{S}_{k,l}} \mathrm{sgn}(s)\, \alpha_x(a_{s(1)},\ldots,a_{s(k)}) \beta_x(a_{s(k+1)},\ldots,a_{s(k+l)}). \]
Then $\alpha\wedge\beta\in \Omega_A^{k+l}(U)$,
one has $\alpha\wedge\beta=(-1)^{kl}\beta\wedge\alpha$,
and 
$$d_A(\alpha\wedge\beta)=(d_A\alpha)\wedge\beta+(-1)^k\alpha\wedge d_A\beta.$$
Therefore, $\Omega_A:=\bigoplus_{k\in \N_0}\Omega_A^k$ is a sheaf of dg-$\R$-algebras.
In the special case $A=TM$ and $\anch=\id_{TM}$,
we write $\Omega_M^k:=\Omega_{TM}^k$,
and the differential is the usual de~Rham differential.
More generally, if $(E,\nabla)$ is a representation
and $\alpha\in \Omega_A^k(U)$, $\beta\in \Omega_A^l(U,E)$,
then $$d_{A,\nabla}(\alpha\wedge\beta)=d_A\alpha\wedge\beta+(-1)^k\alpha\wedge d_{A,\nabla}\beta.$$
Hence $\Omega^\bullet_A(-,E)$ is a sheaf of dg-modules over the sheaf of dg-algebras $\Omega^\bullet_A$.
Finally, let $\sE$ be a locally convex space
equipped with a continuous bilinear associative commutative  multiplication $\Theta\colon \sE\times \sE\to \sE$.
Then the same shuffle formula, with the pointwise product induced by $\Theta$,
makes $\Omega^\bullet_A(-,\sE):=\Omega^\bullet_A(-,M\times\sE)$ (with the trivial flat $A$-connection) into a sheaf of dg-algebras.
More generally, suppose that $E\to M$ is a vector bundle
with a smooth fiberwise bilinear commutative associative multiplication $\Theta\colon E\oplus E\to E$,
and let $\nabla$ be a flat $A$-connection on $E$
such that $$\nabla_\xi(\Theta(\sigma,\tau))=\Theta(\nabla_\xi\sigma,\tau)+\Theta(\sigma,\nabla_\xi\tau)$$
for all local sections $\xi\in \shA(U)$ and $\sigma,\tau\in \shE(U)$.
Then for $\alpha\in \Omega_A^k(U,E)$ and $\beta\in \Omega_A^l(U,E)$, one defines
\begin{multline*} 
(\alpha\wedge\beta)_x(a_1,\ldots,a_{k+l})  :=  \sum_{s\in \mathfrak{S}_{k,l}} \mathrm{sgn}(s)\, \Theta_x\bigl( \alpha_x(a_{s(1)},\ldots,a_{s(k)}), \beta_x(a_{s(k+1)},\ldots,a_{s(k+l)}) \bigr). 
\end{multline*}
This gives a well-defined element of $\Omega_A^{k+l}(U,E)$,
and with this product $\Omega^\bullet_A(-,E)$ becomes a sheaf of dg-algebras.

The cohomology of $A$ with coefficients in $E$ is the cohomology of the global complex $\Omega_A(M,E)$:
\[
H^k_A(M,E):=
\frac{\ker(d_{A,\nabla}\colon \Omega_A^k(M,E)\to \Omega_A^{k+1}(M,E))}
{\operatorname{im}(d_{A,\nabla}\colon \Omega_A^{k-1}(M,E)\to \Omega_A^k(M,E))}.
\]
We also denote by $\shH_A^k(-,E)$ the $k$-th cohomology sheaf of  $\Omega_A(-,E)$.
In degree zero one has
$$H^0_A(M,E)=\{\sigma\in \Gamma(E):\nabla_\xi\sigma=0\text{ for all }\xi\in\Gamma(A)\}.$$
Indeed, for $\sigma\in \Omega_A^0(M,E)=\Gamma(E)$, the differential is given by
$(d_{A,\nabla}\sigma)(\xi)=\nabla_\xi\sigma$.
If $E=M\times\mathbb R$ is the trivial representation, we write $H^\bullet_A$.
The wedge product on scalar-valued forms reduces to a graded-commutative algebra structure on $H^\bullet(A)$.
More generally, if $(E,\nabla)$ is a representation with a map $\Theta$ as above,
then $\shH^\bullet_A(-,E)$ is a sheaf of graded-commutative $\R$-algebras
and a graded module over $\shH^\bullet_A$.
If, moreover, $\Theta$ has a $\nabla$-flat unit, then $\shH^\bullet_A(-,E)$ is a sheaf of graded-commutative $\shH^\bullet_A$-algebras.

\begin{example}
Let $A=TM$ with $\anch=\id_{TM}$, and let $E=M\times\sE$ for a locally convex space $\sE$.
Equip $E$ with the trivial representation $\nabla_X\sigma=X.\sigma$.
Then $\Omega_A^k(M,E)$ is the usual space of $\sE$-valued smooth $k$-forms on $M$, in the Bastiani sense, and $d_{A,\nabla}$ is the ordinary de~Rham differential.
Thus $H^\bullet_{TM}(M,M\times\sE)=H^\bullet_{\mathrm{dR}}(M,\sE)$.
If $h\colon M\to N$ is smooth, then the morphism $Th\colon TM\to TN$ induces the usual pullback in de~Rham cohomology.
\end{example}

\begin{example}
Let $M=\{*\}$ and let $A=\mathfrak g\to\{*\}$ be a locally convex Lie algebra with smooth bracket.
Let $\sE$ be a locally convex space, set $E:=\{*\}\times\sE$, and let $\pi\colon \mathfrak g\times\sE\to\sE$ be a smooth representation.
Then an $E$-valued $k$-form is exactly a continuous alternating $k$-linear map $c\colon \mathfrak g^k\to\sE$.
The differential is
\begin{multline*}
    (dc)(u_0,\ldots,u_k)
=
\sum_i(-1)^i\pi(u_i)c(u_0,\ldots,\widehat u_i,\ldots,u_k)
\\+
\sum_{i<j}(-1)^{i+j}c([u_i,u_j],u_0,\ldots,\widehat u_i,\ldots,\widehat u_j,\ldots,u_k).
\end{multline*}
Hence $H^\bullet_A(\{*\},E)$ is the continuous Chevalley-Eilenberg cohomology of $\mathfrak g$ with coefficients in $\sE$.
\end{example}

\begin{example}
In the setting of Example~\ref{ex:action-lie-algbd}, let $\sE$ be a locally convex space, let $\pi\colon \fg\times\sE\to\sE$ be a smooth representation of $\fg$, and denote $\pi(u)e=\pi(u,e)$.
That is, $\pi$ is bilinear, $\pi(u)\in \mathcal L(\sE)$, and $$\pi([u,v])e=\pi(u)\pi(v)e-\pi(v)\pi(u)e.$$
Set $E:=M\times\sE$.
For $\xi\in C^\infty(U,\fg)$ and $\sigma\in C^\infty(U,\sE)$, define
$$
(\nabla_\xi\sigma)(x)
:=
d\sigma\bigl(x;\anch(x,\xi(x))\bigr)
+\pi(\xi(x))\sigma(x).
$$
Then $\nabla$ is a flat $A$-connection on $E$.
Indeed, $C^\infty_U$-linearity in $\xi$ follows from the linearity of $\anch$ and $\pi$.
For $f\in C^\infty(U)$ one has
$\nabla_\xi(f\sigma)=f\nabla_\xi\sigma+(\anch(\xi).f)\sigma$.
In the global trivialization $E=M\times\sE$, the local connection term is $B(x,u,e)=\pi(u)e$, which is smooth and bilinear.
It remains to show that $R_\nabla=0$.
Let $\xi,\eta\in C^\infty(U,\fg)$ and let $\sigma\in C^\infty(U,\sE)$.
For a smooth map $\tau$ on $U$ with values in a locally convex space, put
$D_\xi\tau(x):=d\tau\bigl(x;\anch(x,\xi(x))\bigr)$.
The bracket in the action Lie algebroid is
$[\xi,\eta]=D_\xi\eta-D_\eta\xi+[\xi,\eta]_\fg$,
where $[\xi,\eta]_\fg(x):=[\xi(x),\eta(x)]_\fg$.
Using the product rule and the representation property of $\pi$, we obtain
\begin{align*}
(\nabla_\xi\nabla_\eta-\nabla_\eta\nabla_\xi)\sigma
&= [D_\xi,D_\eta]\sigma
+\pi(D_\xi\eta-D_\eta\xi)\sigma
+\pi([\xi,\eta]_\fg)\sigma \\
&= D_{[\xi,\eta]}\sigma+\pi([\xi,\eta])\sigma \\
&= \nabla_{[\xi,\eta]}\sigma .
\end{align*}
Hence $R_\nabla(\xi,\eta)\sigma=0$, and $\nabla$ is flat.
The complex $\Omega^\bullet_A(M,E)$ has the following explicit form.
A $k$-form $\alpha\in \Omega_A^k(M,E)$ is a smooth map $\alpha\colon M\times\fg^k\to\sE$ which is continuous alternating $k$-linear in the $\fg$-variables.
For $u_i\in\fg$, the differential is
\begin{align*}
(d_{A,\nabla}\alpha)(x;u_0,\ldots,u_k)
={}&
\sum_{i=0}^k(-1)^i
d\bigl(\alpha(\,\cdot\,,u_0,\ldots,\widehat u_i,\ldots,u_k)\bigr)
\bigl(x;\anch(x,u_i)\bigr) \\
&+\sum_{i=0}^k(-1)^i
\pi(u_i)\alpha(x,u_0,\ldots,\widehat u_i,\ldots,u_k) \\
&+\sum_{0\leq i<j\leq k}(-1)^{i+j}
\alpha(x,[u_i,u_j],u_0,\ldots,\widehat u_i,\ldots,\widehat u_j,\ldots,u_k).
\end{align*}
If $M$ is compact, then, using the exponential law in the second part of Remark~\ref{remark:lcvb-bvb}, $\alpha$ corresponds to an element
$\alpha^\vee \in \mathcal L^k_{\mathrm{alt}}(\fg, C^\infty(M,\sE))$.
Thus this complex reduces to the continuous Chevalley-Eilenberg complex for $\fg$ with coefficients in $C^\infty(M,\sE)$, where
$$
(u\cdot\sigma)(x)
:=
d\sigma\bigl(x;\anch(x,u)\bigr)+\pi(u)\sigma(x).
$$
That is, we have an isomorphism
$H^\bullet_A(M,E)\cong
H^\bullet_{\mathrm{CE,cont}}\bigl(\fg,C^\infty(M,\sE)\bigr)$.
\end{example}

\section{Morphisms between locally convex Lie algebroids}\label{sec:liealgbdmorphism}

Let $p\colon A\to M$ and $\wt p\colon \wt A\to \wt M$ be
first-order Lie algebroids.
For each smooth map $f\colon M\to \wt M$,
there is a canonical morphism of sheaves of $\R$-algebras $f^\sharp\colon f^{-1}\shcinf_{\wt M}\to \shcinf_M$.
Recall that $f^{-1}\shcinf_{\wt M}$ is the sheafification of the presheaf
$$U\mapsto \varinjlim_{\wt U\supseteq f(U)} \shcinf_{\wt M}(\wt U)$$
and that, on representatives, one has $f^\sharp_U([\,\wt U,g\,])=g\circ f|_U$.
Now let $F\colon A\to \wt A$ be a smooth vector bundle morphism over $f$.
For $k\geq 1$, we use the universal property of sheaves to define a morphism of sheaves $F^\sharp\colon f^{-1}\Omega^k_{\wt A}\to \Omega_A^k$
where the underlying morphism of presheaves is given 
by $F^\sharp_U([\,\wt U,\alpha\,])=F^*\alpha$,
where $$(F^*\alpha)_x(a_1,\ldots,a_k)=\alpha_{f(x)}(F(a_1),\ldots,F(a_k)),\quad 
\text{for } x\in U \text{ and } a_i\in A_x.$$
For $k=0$ we set $F^\sharp=f^\sharp$.

In the results below, identities involving inverse-image sheaves are verified on the corresponding inverse-image presheaves; the asserted morphisms of sheaves then follow from the universal property of sheafification.

\begin{lemma}
The morphism $F^\sharp\colon f^{-1}\Omega^\bullet_{\wt A}\to \Omega^\bullet_A$ is a well-defined
 morphism of sheaves of graded-commutative $\R$-algebras.
\end{lemma}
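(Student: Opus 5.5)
The plan is to verify everything on the inverse-image presheaf level, as the paper indicates, and then pass to sheafifications by the universal property. First I check that for an open $U\subseteq M$, an open $\wt U\supseteq f(U)$ and $\alpha\in\Omega^k_{\wt A}(\wt U)$, the pullback $F^*\alpha$ lies in $\Omega^k_A(U)$. Each $(F^*\alpha)_x=\alpha_{f(x)}\circ(F_x\times\cdots\times F_x)$ is alternating and $k$-linear, and it is continuous because $F_x\colon A_x\to\wt A_{f(x)}$ is continuous linear; continuity of $F_x$ follows from smoothness of $F$ restricted to a fiber. For smoothness of local representatives, fix $x\in U$. I choose a local trivialization $\wt\phi$ of $\wt A$ about $f(x)$, and then a trivialization $\phi$ of $A$ about $x$ with $U_\phi\subseteq U\cap f^{-1}(U_{\wt\phi})$. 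Writing $(\wt\phi\circ F\circ\phi^{-1})(y,v)=(f(y),\wh F(y,v))$ with $\wh F$ smooth and linear in $v$, I get
\[
(F^*\alpha)_\phi(y,v_1,\ldots,v_k)=\alpha_{\wt\phi}\bigl(f(y),\wh F(y,v_1),\ldots,\wh F(y,v_k)\bigr),
\]
which is a composition of smooth maps and hence smooth. By the remark after the definition of $\Omega^k_A(U,E)$, smoothness in one pair of trivializations suffices.

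Next I check independence of representatives and compatibility with restrictions. If $[\wt U,\alpha]=[\wt U',\alpha']$ in the direct limit, then $\alpha$ and $\alpha'$ agree on some $\wt U''\supseteq f(U)$. Since $F^*\alpha$ only uses values at points $f(x)$ with $x\in U$, the two pullbacks coincide. Restriction to $V\subseteq U$ clearly commutes with $F^*$. So the presheaf maps are well defined and natural, and by the universal property they induce a sheaf morphism $f^{-1}\Omega^k_{\wt A}\to\Omega^k_A$. In degree $0$ the map is $f^\sharp$, which is compatible with the previous description, since $F^*g=g\circ f$ for $g\in\shcinf_{\wt M}$.

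Finally I verify the algebra structure pointwise. $F^*$ is $\R$-linear and sends $1$ to $1$. For $\alpha\in\Omega^k_{\wt A}$ and $\beta\in\Omega^l_{\wt A}$, substituting $F(a_i)$ into the shuffle formula gives
\[
F^*(\alpha\wedge\beta)=F^*\alpha\wedge F^*\beta
\]
fiberwise, because the shuffle sum is formed fiberwise and $F_x$ is applied to each argument separately. The mixed cases with a degree-$0$ factor reduce to $F^*(g\alpha)=(g\circ f)F^*\alpha$, which follows in the same way. Graded commutativity is already a property of both targets. The direct limit and the sheafification respect these operations: on the inverse-image presheaf, products are defined on common representatives, and sheafification is a functor that preserves finite products. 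Hence the induced morphism is a morphism of sheaves of graded-commutative $\R$-algebras.

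The only mildly delicate step is the smoothness of local representatives. This needs trivializations of $A$ and $\wt A$ chosen compatibly with $f$, so that $F$ has a local form $(f(y),\wh F(y,v))$. That local form comes directly from $F$ being a smooth fiberwise linear bundle map, so I expect no real obstacle there.
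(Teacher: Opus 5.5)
Your proposal is correct and follows essentially the same route as the paper. Both arguments check well-definedness on representatives of the inverse-image presheaf, use compatibility with restrictions and the universal property of sheafification, obtain smoothness by composing the local representative of $\alpha$ with the local representative $\wh F$ of $F$, and verify multiplicativity pointwise on a common representative; you also make explicit the continuity of $(F^*\alpha)_x$ and the degree-zero agreement with $f^\sharp$, which the paper leaves implicit.
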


\begin{proof}
Suppose that $[\wt U,\alpha]=[\wt V,\beta]$ in the inverse-image presheaf.
Then there exists an open set $\wt W\subseteq \wt U\cap \wt V$ with $f(U)\subseteq \wt W$
such that $\alpha|_{\wt W}=\beta|_{\wt W}$.
By the definition of pullback, $F^*(\alpha|_{\wt W})=F^*(\beta|_{\wt W})$,
hence $F^*\alpha=F^*\beta$ on $U$.
Thus as presheaf morphism, this map is well-defined.
It is compatible with restrictions by construction.
Since $\Omega_A^k$ is a sheaf, this presheaf map induces a morphism of sheaves
$F^\sharp\colon f^{-1}\Omega^k_{\wt A}\to \Omega_A^k$.
It remains to check that the image is a smooth $A$-form.
This is local on $U$.
In local trivializations of $A$ and $\wt A$,
the form $F^*\alpha$ is obtained by composing the smooth local representative of $\alpha$
with the smooth local representative of $F$.
Hence $F^*\alpha$ has smooth local representatives.
Finally, let $[\wt U_1,\alpha]\in f^{-1}\Omega_{\wt A}^k(U)$
and $[\wt U_2,\beta]\in f^{-1}\Omega_{\wt A}^l(U)$.
After restricting to an open set $\wt W\subseteq \wt U_1\cap \wt U_2$ containing $f(U)$,
their product is represented by $[\wt W,\alpha\wedge\beta]$.
Pointwise, $F^*(\alpha\wedge\beta)=F^*\alpha\wedge F^*\beta$.
Thus $F^\sharp$ is multiplicative.
It also preserves constants, hence it is a morphism of sheaves of graded-commutative $\R$-algebras.
\end{proof}

By functoriality of inverse images,
the differential $d_{\wt A}$ induces a differential
$$f^{-1}d_{\wt A}\colon f^{-1}\Omega^k_{\wt A}\to f^{-1}\Omega^{k+1}_{\wt A},\quad
[\wt U,\alpha]\mapsto [\wt U,d_{\wt A}\alpha].$$
Thus $f^{-1}\Omega_{\wt A}$ is a sheaf of dg-$\R$-algebras.

\begin{definition}\label{def:liealgebroidmor}
Let $A\to M$ and $\wt A\to \wt M$ be first-order Lie algebroids.
A smooth vector bundle morphism $F\colon A\to \wt A$ over $f\colon M\to \wt M$
is called a \emph{first-order Lie algebroid morphism}
if $F^\sharp\colon f^{-1}\Omega^\bullet_{\wt A}\to \Omega^\bullet_A$
is a morphism of sheaves of dg-$\R$-algebras, that is, if
\begin{equation}\label{eq:dgLie}
F^\sharp\circ f^{-1}d_{\wt A}=d_A\circ F^\sharp
\end{equation}
as morphisms of degree $+1$.
\end{definition}

Immediately, we see that identity $(\id_A,\id_M)$ is a Lie algebroid morphism.

\begin{lemma}
Let $A_i\to M_i$, $i=1,2,3$, be Lie algebroids,
and let $F_i\colon A_i\to A_{i+1}$ be vector bundle morphisms
over $f_i\colon M_i\to M_{i+1}$, $i=1,2$.
Using the canonical identification $(f_2\circ f_1)^{-1}\cong f_1^{-1}f_2^{-1}$, one has
\begin{equation}\label{eq:sharpfunct}
(F_2\circ F_1)^\sharp = F_1^\sharp\circ f_1^{-1}(F_2^\sharp).
\end{equation}
If, moreover, $F_1$ and $F_2$ are Lie algebroid morphisms,
then $F_2\circ F_1$ is again a Lie algebroid morphism.
\end{lemma}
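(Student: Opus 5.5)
The plan is to verify \eqref{eq:sharpfunct} on the inverse-image presheaves, as the paper indicates, and then use the universal property of sheafification to get the identity of sheaf morphisms. Afterwards, the morphism property of $F_2\circ F_1$ will follow formally from \eqref{eq:sharpfunct} together with functoriality of inverse images.

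\emph{Step 1 (presheaf identification).} Fix an open $U\subseteq M_1$. A section of the presheaf underlying $(f_2\circ f_1)^{-1}\Omega^k_{A_3}$ over $U$ is a germ $[W,\alpha]$ with $W\supseteq f_2(f_1(U))$ open in $M_3$. A section of the presheaf underlying $f_1^{-1}f_2^{-1}\Omega^k_{A_3}$ is a germ $[V,\beta]$ with $V\supseteq f_1(U)$ open and $\beta$ a section of $f_2^{-1}\Omega^k_{A_3}$ over $V$. The canonical identification sends $[W,\alpha]$ to $[f_2^{-1}(W),[W,\alpha]]$.

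\emph{Step 2 (compute both sides of \eqref{eq:sharpfunct}).} On the right-hand side, $f_1^{-1}(F_2^\sharp)$ sends this germ to $[f_2^{-1}(W),F_2^*\alpha]$, and $F_1^\sharp$ then gives $F_1^*F_2^*\alpha$. Evaluating pointwise, for $x\in U$ and $a_i\in (A_1)_x$,
\[
(F_1^*F_2^*\alpha)_x(a_1,\ldots,a_k)=\alpha_{f_2f_1(x)}(F_2F_1a_1,\ldots,F_2F_1a_k)=((F_2\circ F_1)^*\alpha)_x(a_1,\ldots,a_k).
\]
In degree $0$ the same computation reads $g\circ f_2\circ f_1$. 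Both sides are compatible with restrictions, so they agree as presheaf morphisms. Uniqueness in the universal property of sheafification then gives \eqref{eq:sharpfunct}.

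The only delicate point I expect is bookkeeping in this step. The composite $f_1^{-1}(F_2^\sharp)$ is defined on sheafifications, so one should check it on presheaf representatives. One should also note that a germ of a section of the sheaf $f_2^{-1}\Omega_{A_3}$ is locally represented by presheaf germs. Since all maps involved are local and the target $\Omega_{A_1}$ is a sheaf, it suffices to check agreement on a basis of representatives.

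\emph{Step 3 (composition of morphisms).} Write $d_i:=d_{A_i}$. Under the identification, $(f_2f_1)^{-1}d_3$ corresponds to $f_1^{-1}(f_2^{-1}d_3)$, since both act by $[W,\alpha]\mapsto[W,d_3\alpha]$ on representatives. The functor $f_1^{-1}$ preserves composition, so applying it to $F_2^\sharp\circ f_2^{-1}d_3=d_2\circ F_2^\sharp$ gives $f_1^{-1}(F_2^\sharp)\circ f_1^{-1}f_2^{-1}d_3=f_1^{-1}d_2\circ f_1^{-1}(F_2^\sharp)$. Therefore
\[
(F_2F_1)^\sharp\circ(f_2f_1)^{-1}d_3=F_1^\sharp\circ f_1^{-1}d_2\circ f_1^{-1}(F_2^\sharp)=d_1\circ F_1^\sharp\circ f_1^{-1}(F_2^\sharp)=d_1\circ(F_2F_1)^\sharp,
\]
where the middle equality uses that $F_1$ is a morphism. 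Multiplicativity of $(F_2F_1)^\sharp$ is already given by the preceding lemma, and $F_2\circ F_1$ is a smooth vector bundle morphism over $f_2\circ f_1$. Hence $F_2\circ F_1$ is a first-order Lie algebroid morphism.
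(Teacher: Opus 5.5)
Your proposal is correct and follows essentially the same route as the paper: you check $(F_2\circ F_1)^\sharp = F_1^\sharp\circ f_1^{-1}(F_2^\sharp)$ on representatives via $[W,\alpha]\mapsto[f_2^{-1}(W),F_2^*\alpha]\mapsto F_1^*F_2^*\alpha=(F_2\circ F_1)^*\alpha$. You then derive the morphism property from the same chain of equalities, using functoriality of $f_1^{-1}$ and the differential-compatibility identities for $F_2$ and $F_1$; your extra remarks on sheafification and multiplicativity only make explicit what the paper leaves implicit.
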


\begin{proof}
For $[\wt U,\alpha]\in (f_2\circ f_1)^{-1}\Omega^k_{A_3}(U)$ one has
\[ (F_1^\sharp\circ f_1^{-1}(F_2^\sharp))([\wt U,\alpha]) = F_1^\sharp([ f_2^{-1}(\wt U),F_2^*\alpha]) = F_1^*(F_2^*\alpha) = (F_2\circ F_1)^*\alpha, \]
which proves~\eqref{eq:sharpfunct}.
If $F_1$ and $F_2$ are Lie algebroid morphisms, then
\[ \begin{split} 
(F_2\circ F_1)^\sharp\circ (f_2\circ f_1)^{-1}d_{A_3} &= F_1^\sharp\circ f_1^{-1}(F_2^\sharp\circ f_2^{-1}d_{A_3}) \\ &= F_1^\sharp\circ f_1^{-1}(d_{A_2}\circ F_2^\sharp) = d_{A_1}\circ (F_2\circ F_1)^\sharp, 
\end{split}
\]
so $F_2\circ F_1$ is a Lie algebroid morphism.
\end{proof}

\begin{corollary}
First-order lie algebroids in the sense of Definition~\ref{def:liealgebroid},
together with Lie algebroid morphisms in the sense of Definition~\ref{def:liealgebroidmor} form a category.
\end{corollary}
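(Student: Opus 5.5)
The corollary is essentially a bookkeeping consequence of the preceding lemma, so the plan is to verify the category axioms one at a time.

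\emph{Objects and morphisms.} The objects are first-order Lie algebroids $A\to M$ in the sense of Definition~\ref{def:liealgebroid}. The morphisms are pairs $(F,f)$ consisting of a smooth vector bundle morphism $F\colon A\to\wt A$ over $f\colon M\to\wt M$ satisfying~\eqref{eq:dgLie}.

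\emph{Composition.} Composition is composition of pairs, $(F_2,f_2)\circ(F_1,f_1)=(F_2\circ F_1,f_2\circ f_1)$. This is again a smooth vector bundle morphism, as recalled in the preliminaries. By the preceding lemma it again satisfies~\eqref{eq:dgLie}, so composition is well defined on morphisms.

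\emph{Associativity.} Associativity is inherited from associativity of composition of maps, since a morphism is determined by the underlying pair of maps $(F,f)$. I would note in passing that~\eqref{eq:sharpfunct} is compatible with this. The canonical identifications $(f_3 f_2 f_1)^{-1}\cong f_1^{-1}f_2^{-1}f_3^{-1}$ are coherent, so both bracketings give the same $F^\sharp$, namely $\alpha\mapsto F_1^*F_2^*F_3^*\alpha$ on representatives. This is not needed for the category axioms, but it shows that $F\mapsto F^\sharp$ is contravariantly functorial.

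\emph{Identities.} The identity morphism on $A$ is $(\id_A,\id_M)$. The remark after Definition~\ref{def:liealgebroidmor} shows it is a morphism. On the inverse-image presheaf, $\id_M^{-1}\Omega_A^\bullet$ is canonically $\Omega_A^\bullet$ and $\id_A^\sharp$ is the identity, so~\eqref{eq:dgLie} holds trivially. The unit laws $F\circ\id_A=F=\id_{\wt A}\circ F$ are then immediate at the level of maps.

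\emph{Main obstacle.} The only possible subtlety is sheaf-theoretic. Identities such as~\eqref{eq:dgLie} and~\eqref{eq:sharpfunct} are checked on representatives $[\wt U,\alpha]$ of the inverse-image presheaves, and one needs them to pass to the sheafifications. I would handle this with the convention stated before the lemma on $F^\sharp$: two sheaf morphisms out of a sheafification that agree on the presheaf agree, by the universal property. Beyond this there is no real difficulty.
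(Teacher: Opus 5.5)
Your proposal is correct and matches the paper's approach. The corollary follows directly from the preceding composition lemma, via \eqref{eq:sharpfunct}, together with the observation that $(\id_A,\id_M)$ is a morphism; associativity and the unit laws come from composition of the underlying maps. Checking identities on representatives of the inverse-image presheaves and then invoking the universal property of sheafification is exactly the convention the paper adopts.
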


\subsection*{The sheaf $\wgrisv$}

Let $\mathcal{G}\tto M$ be a \nnh Lie groupoid.
Since $\src\colon \mathcal{G}\to M$ is a submersion,
the vertical bundle $\ker(T\src)\to \mathcal{G}$ is a \nnh Lie algebroid over $\mathcal{G}$,
with anchor the inclusion into $T\mathcal{G}$
and bracket the ordinary bracket of $\src$-vertical vector fields.\footnote{We note that all definitions of first-order Lie algebroid, forms, and differentials are local and will also be used for vector bundles over non-Hausdorff manifolds.}
We write $\ddRs:=d_{\ker(T\src)}$ for its Lie algebroid differential.
For each open set $U\subseteq M$, define
\[ \wgrisvk(U):= \{\omega\in \Omega^k_{\ker(T\src)}(\tgt^{-1}(U)) : \omega \text{ is right-invariant}\}. \]
Here right-invariant means that, for every $g\in \mathcal{G}$,
we have 
\[
R_g^*\bigl(\omega|_{\tgt^{-1}(U)\cap \G_{\src(g)}}\bigr)
=
\omega|_{\tgt^{-1}(U)\cap \G_{\tgt(g)}}.
\]
Equivalently, $(R_g^*\omega)_h(u_1,\ldots,u_k)=\omega_{hg}(T_hR_g (u_1),\ldots,T_hR_g (u_k))$
whenever both sides are defined.
The wedge product preserves right-invariance,
and $\ddRs$ preserves right-invariance
because each $R_g$ is a Lie algebroid automorphism of $\ker(T\src)$ over its domain.
Hence $\wgrisv:=\bigoplus_{k\geq 0}\wgrisvk$ is a sheaf of dg-$\R$-algebras on $M$.
Define $\Psi\colon \Omega^k_{\Lie(\mathcal{G})}\to \wgrisvk$ as follows.
For $\alpha\in \Omega^k_{\Lie(\mathcal{G})}(U)$,
$h\in \tgt^{-1}(U)$ and $u_i\in \ker(T_h\src)$, set
\[ \Psi_U(\alpha)_h(u_1,\ldots,u_k) := \alpha_{\tgt(h)}(T_hR_{h^{-1}}(u_1),\ldots,T_hR_{h^{-1}}(u_k)). \]

\begin{lemma}\label{lemma:Omega-risv-isomorphism}
For each $k\geq 0$, the map $\Psi\colon \Omega^k_{\Lie(\mathcal{G})}\to \wgrisvk$ is an isomorphism of sheaves.
Moreover, the induced map $\Psi\colon \Omega_{\Lie(\mathcal{G})}^\bullet\to \wgrisv$ is an isomorphism of sheaves of dg-$\R$-algebras.
\end{lemma}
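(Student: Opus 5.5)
I would proceed in three steps: show that $\Psi$ lands in $\wgrisvk$ and is invertible with an explicit inverse, check compatibility with wedge products, and finally prove compatibility with the differentials.

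\emph{Well-definedness and the inverse.} For $\alpha\in\Omega^k_{\Lie(\G)}(U)$, each $\Psi_U(\alpha)_h$ is continuous, alternating and $k$-linear, since $T_hR_{h^{-1}}\colon\ker(T_h\src)\to\Lie(\G)_{\tgt(h)}$ is a topological linear isomorphism. Right-invariance follows from $R_{h^{-1}}\circ R_g^{-1}=R_{(hg)^{-1}}$ on source fibers and the chain rule. Smoothness of the local representatives is local near each $h$. I would work in the source-adapted chart $\chi$ from the proof of Theorem~\ref{thm:groupoid-algebroid-first-order}, moved to $h$ by a right translation. In this chart the map $(h,u)\mapsto T_hR_{h^{-1}}u$ is the inverse of the smooth, fiberwise linear map $(h,v)\mapsto T_{\one(\tgt h)}R_h v$; this is the map $R_\phi$ near units, extended by translation. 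The composite $\alpha_\phi(\tgt(h),\,T_hR_{h^{-1}}(\cdot))$ is therefore smooth, because the inverse is again a smooth vector bundle isomorphism covering $\tgt$. Here I expect to use that $\ker(T\src)\cong\src$-fiberwise $\tgt^*\Lie(\G)$ via $(h,\xi)\mapsto T R_h\xi$, so that this is a smooth vector bundle isomorphism $\tgt^*\Lie(\G)\to\ker(T\src)$. The inverse map is $\omega\mapsto\omega\circ\one$, i.e.\ $(\omega\circ\one)_x=\omega_{1_x}$; its smoothness is clear by composing with $\one$. Right-invariance gives $\Psi(\omega\circ\one)=\omega$, and the identity $\Psi(\alpha)\circ\one=\alpha$ is immediate. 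Compatibility with restrictions is obvious, so $\Psi$ is an isomorphism of sheaves.

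\emph{Wedge products.} Multiplicativity is pointwise linear algebra, because $\Psi$ is fiberwise the pullback along a linear isomorphism. Hence $\Psi(\alpha\wedge\beta)=\Psi\alpha\wedge\Psi\beta$, and $\Psi$ preserves constants.

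\emph{Differentials (main step).} It remains to show $\ddRs\Psi(\alpha)=\Psi(d_{\Lie(\G)}\alpha)$. Both sides are right-invariant: $\ddRs$ preserves right-invariance, as already noted in the text. So it suffices to compare them at units $\one(x)$, by applying $\circ\one$. By the well-definedness argument for~\eqref{eq:dAnabla} (formula~\eqref{eq:dAnablalocal}), $(\ddRs\omega)_{1_x}(a_0,\dots,a_k)$ may be computed using any local $\src$-vertical vector fields extending the $a_i$. I would choose the right-invariant extensions $\ora{\xi_i}$ of local sections $\xi_i$ of $\Lie(\G)$ near $x$.

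With these choices, $\Psi(\alpha)(\ora{\xi_0},\dots)=\tgt^*(\alpha(\xi_0,\dots))$ and $\ora{\xi_i}.\tgt^*f=\tgt^*(\anch(\xi_i).f)$, since $T\tgt\circ\ora\xi=\anch(\xi)\circ\tgt$. The bracket terms match by the definition of the bracket, $[\ora\xi_i,\ora\xi_j]=\ora{[\xi_i,\xi_j]}$, as in Theorem~\ref{thm:groupoid-algebroid-first-order}. Evaluating~\eqref{eq:dA} term by term at $1_x$ then gives exactly $(d_{\Lie(\G)}\alpha)_x(a_0,\dots,a_k)$.

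The main obstacle I anticipate is the justification that the $\src$-vertical differential on the possibly non-Hausdorff $\tgt^{-1}(U)$ may be evaluated using these extensions, which are only defined over $\tgt^{-1}(V)$ for small $V$. The sheaf property and locality from the well-definedness lemma should suffice.
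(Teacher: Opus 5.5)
Your proposal is correct and follows essentially the same route as the paper. Like the paper, you use the explicit inverse $\omega\mapsto\omega\circ\one$ and pointwise multiplicativity, and you compare the differentials by evaluating \eqref{eq:dA} on right-invariant extensions $\ora{\xi_i}$, using $[\ora{\xi_i},\ora{\xi_j}]=\ora{[\xi_i,\xi_j]}$ and $T\tgt\circ\ora{\xi_i}=\anch(\xi_i)\circ\tgt$.

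Two small remarks. Your reduction to units is harmless but not needed: $\Psi(\alpha)(\ora{\xi_1},\dots,\ora{\xi_k})=\tgt^*(\alpha(\xi_1,\dots,\xi_k))$ holds on all of $\tgt^{-1}(V)$, and the values $\ora{\xi_i}(h)$ exhaust $\ker(T_h\src)$. Also, smoothness of $u\mapsto T_hR_{h^{-1}}u$ is seen most directly from $T_hR_{h^{-1}}u=T\mult(u,0_{\inv(h)})$, rather than by inverting a bundle map; in the locally convex setting, invertibility alone does not give a smooth inverse.
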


\begin{proof} 
Right multiplications are smooth and, 
for
$h\in \mathcal{G}$,
the map $T_hR_{h^{-1}} $ is an isomorphism.
Hence
$\Psi_U(\alpha)$ is a smooth $\src$-vertical $k$-form on $\tgt^{-1}(U)$.
It is right-invariant because for composable $h$ and $g$ one has 
$T_{hg}R_{(hg)^{-1}}\circ T_hR_g=T_hR_{h^{-1}}$.
Define $\Psi_U^{-1}\colon \wgrisvk(U)\to \Omega^k_{\Lie(\mathcal{G})}(U)$
by $\Psi_U^{-1}(\omega)_x(v_1,\ldots,v_k)=\omega_{\one(x)}(v_1,\ldots,v_k)$.
Smoothness follows because $\one\colon U\to \tgt^{-1}(U)$ is smooth.
Obviously, $\Psi_U^{-1}$ is the inverse of $\Psi_U$.
Hence $\Psi$ is an isomorphism of sheaves.
The compatibility with wedge products is immediate from the definition.
It remains to compare the differentials.
Let $\xi_0,\ldots,\xi_k$ be local sections of $\Lie(\mathcal{G})|_U$,
and let $\ora{\xi_i}$ be their right-invariant extensions to $\tgt^{-1}(U)$.
Then $\ora{\xi_i}$ are $\src$-vertical vector fields,
one has $[\ora{\xi_i},\ora{\xi_j}]=\ora{[\xi_i,\xi_j]}$,
and $T\tgt\circ \ora{\xi_i}=\anch(\xi_i)\circ \tgt$.
Applying the formula~\eqref{eq:dA} for $\ddRs$ to the vector fields $\ora{\xi_0},\ldots,\ora{\xi_k}$
gives exactly the formula for $d_{\Lie(\mathcal{G})}$ applied to $\xi_0,\ldots,\xi_k$.
Thus $\Psi(d_{\Lie(\mathcal{G})}\alpha)=\ddRs(\Psi(\alpha))$.
\end{proof}

\begin{theorem}
Let $(F,f)\colon \mathcal{G}\tto M\to \wt{\mathcal{G}}\tto \wt M$ be a morphism of \nnh Lie groupoids.
Then $(\Lie(F),f)$ is a first-order Lie algebroid morphism.
\end{theorem}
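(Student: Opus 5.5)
The plan is to transport the problem to right-invariant source-vertical forms via Lemma~\ref{lemma:Omega-risv-isomorphism}, where pullback along $F$ is visibly compatible with the differentials. First, set $\Phi:=\Lie(F)=\one^*(TF|_{\ker(T\src)})\colon \Lie(\G)\to\Lie(\wt\G)$. Since $\wt\src\circ F=f\circ\src$, the map $TF$ sends $\ker(T\src)$ into $\ker(T\wt\src)$, and $F\circ\one=\wt\one\circ f$. Hence $\Phi$ is a smooth vector bundle morphism over $f$; smoothness can be read off in source-adapted charts as in the proof of Theorem~\ref{thm:groupoid-algebroid-first-order}.

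Second, I would define the restricted tangent map $F_v:=TF|_{\ker(T\src)}\colon\ker(T\src)\to\ker(T\wt\src)$. This is a vector bundle morphism over $F\colon\G\to\wt\G$. I claim it is a first-order Lie algebroid morphism between the vertical algebroids, that is,
\[
F_v^\sharp\circ F^{-1}\ddRs=\ddRs\circ F_v^\sharp .
\]
This is a local statement. For $\omega\in\Omega^k_{\ker(T\wt\src)}(\wt W)$ and a point $g$, I would evaluate both sides on $\src$-vertical vector fields near $g$. The key point is that $F$ maps $\src$-fibers $\G_x$ into $\wt\G_{f(x)}$. Restricted to a source fiber, $F_v^*\omega$ is therefore the ordinary pullback of a form along the smooth map $F|_{\G_x}\colon\G_x\to\wt\G_{f(x)}$, and $\ddRs$ restricted to a fiber is the fiberwise de Rham differential. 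The identity then follows from naturality of the de Rham differential in the Bastiani setting. In the Definition~\ref{def:liealgebroidmor} framework, this is just the statement that $TF$ is a morphism $TM\to T\wt M$.

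I expect this step to be the main obstacle. Vector fields need not be $F$-related, and $\ker(T\src)$ is not literally a product of fibers. I would therefore argue directly with formula~\eqref{eq:dA} in local source-adapted charts. The local representative of $\ddRs\omega$ involves only derivatives in vertical directions, plus the bracket term coming from $C$, which vanishes here because the vertical algebroid is the tangent bundle of the fibers. The required identity then reduces to the chain rule $d(\omega_\phi\circ F)=d\omega_\phi\circ TF$ together with symmetry of second derivatives, which is where the $C$-type terms cancel. This is exactly the classical proof that $d$ commutes with pullback, carried out fiberwise with parameters.

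Third, I would check right-invariance compatibility: $F\circ R_g=R_{F(g)}\circ F$ on source fibers, since $F$ is multiplicative. Hence $F_v^*$ maps right-invariant forms on $\wt\tgt^{-1}(\wt U)$ to right-invariant forms on $\tgt^{-1}(f^{-1}\wt U)$, using $F(\tgt^{-1}(f^{-1}\wt U))\subseteq\wt\tgt^{-1}(\wt U)$. It remains to show $\Psi\circ\Phi^\sharp=F_v^\sharp\circ f^{-1}\Psi$ on inverse-image presheaves. At $h$, one has
\[
TF\circ T_hR_{h^{-1}}^{-1}=T R_{F(h)^{-1}}^{-1}\circ TF ,
\]
and restricting to units gives $\Phi$. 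Combining this with the dg-isomorphism $\Psi$ of Lemma~\ref{lemma:Omega-risv-isomorphism} and the commutation from the second step yields $\Phi^\sharp\circ f^{-1}d_{\Lie(\wt\G)}=d_{\Lie(\G)}\circ\Phi^\sharp$. The final passage from the presheaf identity to the sheaf identity uses the universal property of sheafification.
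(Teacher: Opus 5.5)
Your proposal is correct and follows essentially the same route as the paper. Both transport the problem through $\Psi$ to right-invariant source-vertical forms, use that pullback along $F|_{\G_x}\colon \G_x\to\wt\G_{f(x)}$ commutes with the fiberwise differential $\ddRs$, and conclude from the commutative square $\Psi\circ(\Lie F)^\sharp=F^*\circ f^{-1}\Psi$ together with the fact that $\Psi$ is a dg-isomorphism. Two of your steps go further than the paper, which states them in a sentence: the chart-level justification of the fiberwise commutation (chain rule plus symmetry of second derivatives), and the explicit check that pullback preserves right-invariance.
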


\begin{proof}
Since $\wt\src\circ F=f\circ \src$,
the tangent map sends $\ker(T\src)$ into $\ker(T\wt\src)$.
Hence $F$ induces pullback morphisms $F^*\colon f^{-1}\wt \tgt_*\Omega^k_{\ker(T\wt\src)}\to \tgt_*\Omega^k_{\ker(T\src)}$.
For $[V,\beta]\in f^{-1}\Omega^k_{\wt\G,\mathrm{risv} }(U)$, $g\in \tgt^{-1}(U), u_i\in \ker(T_g\src)$:
	$$(F^*[V,\beta])_g(u_1,\ldots, u_k)
	=
	\beta_{F(g)}(T_gF(u_1),\ldots, T_gF(u_k))
	$$ is well-defined since $F(\tgt^{-1}(U))
	\subseteq \wt{\tgt}^{-1}(f(U))$. 
Therefore $F^*$ restricts to a morphism $F^*\colon f^{-1}\Omega^k_{\wt\G,\mathrm{risv}}
\to \wgrisvk$.
Since $F$ maps each $\src$-fiber of $\mathcal{G}$ into the corresponding $\wt\src$-fiber of $\wt{\mathcal{G}}$,
pullback commutes with the
differential.
Indeed, for each $x\in M$, the restriction
$F_x\colon \mathcal{G}|_x\to \wt{\mathcal{G}}|_{f(x)}$
is a smooth map of source fibers. Since $\ddRs$ is the de Rham differential
along the source fibers, pullback commutes with it fiberwise. Therefore
$F^*\circ f^{-1}\ddRs=\ddRs\circ F^*$.

On the other hand, the definitions give a commutative diagram:
\[ \begin{tikzcd}
f^{-1}\Omega^k_{\Lie(\wt{\mathcal{G}})} \arrow[r, "(\Lie F)^\sharp"] \arrow[d, swap, "f^{-1}\Psi"] & \Omega^k_{\Lie(\mathcal{G})} \arrow[d, "\Psi"] \\
f^{-1}\Omega^k_{\wt\G,\mathrm{risv}} \arrow[r, "F^*"] & \wgrisvk.
\end{tikzcd} \]
Indeed, for $[\,\wt U,\alpha\,]\in f^{-1}\Omega^k_{\Lie(\wt{\mathcal{G}})}(U)$,
$g\in \tgt^{-1}(U)$ and $u_i\in \ker(T_g\src)$,
both paths evaluate to
\[ \alpha_{f(\tgt(g))}\bigl( T_{F(g)}R_{F(g)^{-1}}(T_gF(u_1)),\ldots, T_{F(g)}R_{F(g)^{-1}}(T_gF(u_k)) \bigr). \]
Using the previous lemma, we obtain
\[
\begin{split}
  \Psi\bigl((\Lie F)^\sharp(f^{-1}
  d_{\Lie(\wt{\mathcal{G}})}\alpha)\bigr)
  &= F^*\bigl(f^{-1}\Psi(d_{\Lie
  (\wt{\mathcal{G}})}\alpha)\bigr) 
  = F^*\bigl(f^{-1}\ddRs(\Psi(\alpha))\bigr) \\
  &= \ddRs\bigl(F^*(f^{-1}\Psi(\alpha))\bigr) 
  = \Psi\bigl(d_{\Lie(\mathcal{G})}
  ((\Lie F)^\sharp\alpha)\bigr).
\end{split}
\]

Since $\Psi$ is an isomorphism, $(\Lie F)^\sharp\circ f^{-1}d_{\Lie(\wt{\mathcal{G}})} = d_{\Lie(\mathcal{G})}\circ (\Lie F)^\sharp$.
\end{proof}

\begin{corollary}\label{cor:Liefunctor}
$\Lie$ is a functor from the category of  \nnh  Lie groupoids
to the category of first-order Lie algebroids.
\end{corollary}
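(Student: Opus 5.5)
The functor $\Lie$ is assembled from the results already proved, so the plan is to check the categorical axioms one by one.

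\emph{Objects.} For a Lie groupoid $\G\tto M$, set $\Lie(\G)=\one^*\ker(T\src)$, with the anchor and bracket described in the preliminaries. Theorem~\ref{thm:groupoid-algebroid-first-order} shows this is a first-order Lie algebroid, so $\Lie$ sends objects to objects.

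\emph{Morphisms.} A morphism $(F,f)$ of Lie groupoids preserves source fibres and units. Its tangent map therefore restricts to a smooth vector bundle morphism $\Lie(F):=TF|_{\Lie(\G)}\colon\Lie(\G)\to\Lie(\wt\G)$ over $f$. The preceding theorem shows that $(\Lie(F),f)$ is a first-order Lie algebroid morphism in the sense of Definition~\ref{def:liealgebroidmor}. So $\Lie$ sends morphisms to morphisms.

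\emph{Identities.} $\Lie(\id_\G)=T\id_\G|_{\Lie(\G)}=\id_{\Lie(\G)}$, with base map $\id_M$, and this is the identity morphism of the category of first-order Lie algebroids.

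\emph{Composition.} For composable groupoid morphisms $F_1\colon\G_1\to\G_2$ and $F_2\colon\G_2\to\G_3$, the chain rule gives $T(F_2\circ F_1)=TF_2\circ TF_1$. Restricting to $\one_1^*\ker(T\src_1)$ yields $\Lie(F_2\circ F_1)=\Lie(F_2)\circ\Lie(F_1)$, over the composed base map $f_2\circ f_1$. This composite is the composition in the category established in the Corollary following \eqref{eq:sharpfunct}. By \eqref{eq:sharpfunct}, the induced pullbacks on forms also satisfy $(\Lie(F_2)\circ\Lie(F_1))^\sharp=\Lie(F_1)^\sharp\circ f_1^{-1}\Lie(F_2)^\sharp$, so the two notions of composition agree.

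The only point that needs care is that $TF$ really maps $\ker(T\src)$ along the unit section into $\ker(T\wt\src)$ along $\wt\one$, so that $\Lie(F)$ is a well-defined smooth vector bundle morphism. This follows from $\wt\src\circ F=f\circ\src$ and $F\circ\one=\wt\one\circ f$. Smoothness holds because $\Lie(F)$ is the restriction of the smooth map $TF$ to a split subbundle, which is a split submanifold. Everything else is formal.
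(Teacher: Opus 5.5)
Your proposal is correct and matches the paper. The paper gives no separate proof: objects are handled by Theorem~\ref{thm:groupoid-algebroid-first-order}, arrows by the preceding theorem on Lie groupoid morphisms, and identities and composition follow from the chain rule for $T$. Your appeal to \eqref{eq:sharpfunct} in the composition step is harmless but not needed, since composition in the target category is just composition of smooth vector bundle morphisms.
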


\subsection*{Anchor and bracket compatibility}
The next result shows that the dg-definition of morphism
implies the usual anchor and bracket compatibility.

\begin{theorem}\label{thm:braanchcomp}
Let $A\to M$ and $\wt A\to \wt M$ be first-order Lie algebroids,
and let $F\colon A\to \wt A$ be a first-order Lie algebroid morphism over $f\colon M\to \wt M$.
Then:
\begin{enumerate}\itemsep0em

\item \label{item:anchorcomp}
$\wt\anch\circ F = Tf\circ \anch$;

\item \label{item:bracketcomp}
if $\xi,\eta\in \shA(U)$ and $\wt\xi,\wt\eta\in \wt{\shA}(\wt U)$
satisfy $f(U)\subseteq \wt U$, $F\circ \xi=\wt\xi\circ f$, and
$F\circ \eta=\wt\eta\circ f$ on $U$,
then $F\circ [\xi,\eta]=[\wt\xi,\wt\eta]\circ f$ on $U$.
\end{enumerate}
Furthermore, if $f$ is a local diffeomorphism, then the converse is true.
\end{theorem}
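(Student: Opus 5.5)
The plan is to test the identity \eqref{eq:dgLie} on forms of degree $0$ and $1$ only, evaluated at a point. The necessary test forms will come from continuous linear functionals on model spaces, made available by Hahn--Banach as in the proof of Lemma~\ref{lemma:anchsheaf}. For the converse, I will reduce an arbitrary form degree to the same two compatibilities through the local formula \eqref{eq:dA}.

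\emph{Anchor compatibility.} Apply \eqref{eq:dgLie} in degree $0$. Take $g\in\shcinf_{\wt M}(\wt U)$ and $a\in A_x$ with $x\in U:=f^{-1}(\wt U)$. On one side, $F^\sharp(d_{\wt A}g)(a)=(d_{\wt A}g)_{f(x)}(F(a))=dg(\wt\anch(F(a)))$. On the other side, $d_A(g\circ f)(a)=d(g\circ f)(\anch(a))=dg(Tf\,\anch(a))$. Hence $dg$ annihilates the vector $\wt\anch(F(a))-Tf(\anch(a))\in T_{f(x)}\wt M$ for every local smooth $g$. Taking $g=\lambda\circ\kappa$, with $\kappa$ a chart of $\wt M$ and $\lambda$ a continuous linear functional on its model space, Hahn--Banach forces this vector to vanish.

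\emph{Bracket compatibility.} Apply \eqref{eq:dgLie} in degree $1$. Fix $x\in U$. Choose a local trivialization $\wt\phi$ of $\wt A$ around $f(x)$, a functional $\lambda\in\sF_{\wt\phi}'$, and set $\theta_\lambda:=\lambda\circ\pr_2\circ\wt\phi\in\Omega^1_{\wt A}$; this form has smooth local representative $(y,v)\mapsto\lambda(v)$. Evaluate $d_A(F^*\theta_\lambda)(\xi,\eta)$ with \eqref{eq:dA}. The function $F^*\theta_\lambda(\eta)$ equals $\theta_\lambda(\wt\eta)\circ f$, because $F\circ\eta=\wt\eta\circ f$, and similarly for $\xi$. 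Consequently
\[
d_A(F^*\theta_\lambda)(\xi,\eta)=\bigl(\anch(\xi).(\theta_\lambda(\wt\eta)\circ f)-\anch(\eta).(\theta_\lambda(\wt\xi)\circ f)\bigr)-\theta_\lambda(F[\xi,\eta]).
\]
By the chain rule and the anchor compatibility just proved, $\anch(\xi).(h\circ f)=(\wt\anch(\wt\xi).h)\circ f$. On the other side, $F^*(d_{\wt A}\theta_\lambda)(\xi,\eta)$ equals $(d_{\wt A}\theta_\lambda)(\wt\xi,\wt\eta)\circ f$, which expands to the same derivative terms minus $\theta_\lambda([\wt\xi,\wt\eta])\circ f$. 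Comparing the two gives $\lambda\bigl(\pr_2\wt\phi(F[\xi,\eta](x)-[\wt\xi,\wt\eta](f(x)))\bigr)=0$ for every $\lambda$, and Hahn--Banach yields item~\ref{item:bracketcomp}. One point needs care here. Evaluating $F^*(d_{\wt A}\theta)$ on $(\xi,\eta)$ means evaluating $d_{\wt A}\theta$ at $f(x)$ on the vectors $F\xi(x),F\eta(x)$. By well-definedness of \eqref{eq:dA}, this value may be computed using any local sections extending those vectors, and $\wt\xi,\wt\eta$ are such sections.

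\emph{Converse.} Assume $f$ is a local diffeomorphism and that the two compatibilities hold. Since \eqref{eq:dgLie} is local, I will restrict to open sets on which $f$ is a diffeomorphism onto its image, so that $f^{-1}$ acts on germs as ordinary restriction. Given $a_0,\dots,a_k\in A_x$, the main obstacle is to produce sections $\xi_i$ through the $a_i$ that are $F$-related to sections $\wt\xi_i$ of $\wt A$; $F$ need not be fiberwise surjective, so one cannot simply push sections forward. To get around this, I will take the constant sections $\xi_i$ in a trivialization $\phi$ of $A$ and define $\wt\xi_i:=F\circ\xi_i\circ f^{-1}$. This is a smooth section of $\wt A$ over $f(U)$ because $f$ is invertible there, and $F\circ\xi_i=\wt\xi_i\circ f$ holds by construction. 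Now expand $d_A(F^*\alpha)(\xi_0,\dots,\xi_k)$ using \eqref{eq:dA}. Each derivative term becomes the corresponding $\wt\anch$-derivative term composed with $f$, by anchor compatibility and the chain rule. Each bracket term satisfies $F[\xi_i,\xi_j]=[\wt\xi_i,\wt\xi_j]\circ f$ by bracket compatibility. Therefore the whole expression equals $(d_{\wt A}\alpha)(\wt\xi_0,\dots,\wt\xi_k)\circ f=F^*(d_{\wt A}\alpha)(\xi_0,\dots,\xi_k)$. Evaluating at $x$ gives \eqref{eq:dgLie} on representatives of the inverse-image presheaf, and the identity on the sheaf follows by sheafification.
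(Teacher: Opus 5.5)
Your proposal is correct and follows essentially the same route as the paper: the degree-$0$ test functions $\lambda\circ\kappa$ with Hahn--Banach for the anchor, the degree-$1$ forms $\lambda\circ\pr_2\circ\wt\phi$ with Hahn--Banach for the bracket, and, for the converse, the $F$-related sections $\wt\xi_i=F\circ\xi_i\circ(f|_U)^{-1}$ substituted into \eqref{eq:dA}. The only difference is cosmetic: you take the $\xi_i$ to be constant sections in a trivialization and evaluate at a point, whereas the paper uses arbitrary local sections $\xi_i\in\Gamma(A|_U)$ and compares the two expansions as functions on $U$.
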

	
\begin{proof}
We first prove~\ref{item:anchorcomp}.
Let $x\in M$ and $a\in A_x$.
Take an open neighborhood $U$ of $x$ and let $[\,\wt U,g\,]\in f^{-1}\shcinf_{\wt M}(U)$.
Since $F$ is a Lie algebroid morphism,
the degree-$0$ part of~\eqref{eq:dgLie} gives $F^\sharp(f^{-1}d_{\wt A}[\,\wt U,g\,])=d_A(f^\sharp[\,\wt U,g\,])$.
Evaluating at $x$ on $a$ gives
\[ (dg)_{f(x)}(\wt\anch(F(a))) = (d(g\circ f))_x(\anch(a)) = (dg)_{f(x)}(T_xf(\anch(a))). \]
This holds for every local smooth function $g$ near $f(x)$.
Choose a chart $\phi$ about $f(x)$ with values in a locally convex space $\sF$,
and take $g=\lambda\circ \phi$ for $\lambda\in \sF'$.
By the Hahn-Banach theorem,
it follows that $\wt\anch(F(a))=T_xf(\anch(a))$.
This proves~\ref{item:anchorcomp}.
Now assume that $\xi,\eta,\wt\xi,\wt\eta$ are as in~\ref{item:bracketcomp}.
Let $\alpha\in \Omega^1_{\wt A}(\wt U)$.
Applying~\eqref{eq:dgLie} to $\alpha$ and evaluating on $\xi,\eta$ gives
\[ (F^\sharp(d_{\wt A}\alpha))(\xi,\eta) = d_A(F^\sharp\alpha)(\xi,\eta). \]
Expanding both sides by \eqref{eq:dA}
and using the relations $F\circ \xi=\wt\xi\circ f$, $F\circ \eta=\wt\eta\circ f$,
together with anchor compatibility,
the anchor terms cancel and we obtain
\[ \alpha([\wt\xi,\wt\eta])\circ f = \alpha(F([\xi,\eta])) \quad\text{on }U. \]
Fix $x\in U$.
Choose a local trivialization $\wt\phi\colon \wt A|_W\to W\times \sF_{\wt\phi}$ about $f(x)$.
For each $\lambda\in \sF_{\wt\phi}'$,
the fiberwise linear form $\alpha_\lambda(y)(b):=\lambda(\pr_2(\wt\phi(b)))$
defines a local section of $\Omega^1_{\wt A}(W)$.
Applying the previous identity to $\alpha_\lambda$
and using the Hahn-Banach theorem,
we get $F([\xi,\eta](x))=[\wt\xi,\wt\eta](f(x))$.
Since $x$ was arbitrary,~\ref{item:bracketcomp} follows.

Assume now that $f$ is a local diffeomorphism, and that \ref{item:anchorcomp} and \ref{item:bracketcomp} hold.
We show that $F$ is a first-order Lie algebroid morphism. The assertion is local on $M$.
Let $U\subseteq M$ be open such that $f|_U\colon U\to \widetilde U:=f(U)$ is a
diffeomorphism. Let $[\widetilde U,\alpha]\in f^{-1}\Omega_{\widetilde A}^k(U)$.
Then $F^\sharp[\widetilde U,\alpha]=F^*\alpha$ on $U$. We show that
$F^*(d_{\widetilde A}\alpha)=d_A(F^*\alpha)$ on $U$.

Let $\xi_0,\ldots,\xi_k\in\Gamma(A|_U)$, and define
$\widetilde\xi_i\in\Gamma(\widetilde A|_{\widetilde U})$ by
$\widetilde\xi_i:=F\circ\xi_i\circ (f|_U)^{-1}$. Then
$F\circ\xi_i=\widetilde\xi_i\circ f$ on $U$. Using~\eqref{eq:dA}, we get
\[
\begin{aligned}
(F^*d_{\widetilde A}\alpha)(\xi_0,\ldots,\xi_k)
&=(d_{\widetilde A}\alpha)(\widetilde\xi_0,\ldots,\widetilde\xi_k)\circ f\\
&=\sum_{j=0}^k(-1)^j\,
\widetilde\anch(\widetilde\xi_j).\bigl(\alpha(\widetilde\xi_0,\ldots,
\widehat{\widetilde\xi_j},\ldots,\widetilde\xi_k)\bigr)\circ f\\
&\quad+\sum_{i<j}(-1)^{i+j}\,
\alpha([\widetilde\xi_i,\widetilde\xi_j],\widetilde\xi_0,\ldots,
\widehat{\widetilde\xi_i},\ldots,\widehat{\widetilde\xi_j},\ldots,
\widetilde\xi_k)\circ f,
\end{aligned}
\]
while
\[
\begin{aligned}
d_A(F^*\alpha)(\xi_0,\ldots,\xi_k)
&=\sum_{j=0}^k(-1)^j\,
\anch(\xi_j).\bigl((F^*\alpha)(\xi_0,\ldots,\widehat{\xi_j},\ldots,\xi_k)\bigr)\\
&\quad+\sum_{i<j}(-1)^{i+j}\,
(F^*\alpha)([\xi_i,\xi_j],\xi_0,\ldots,\widehat{\xi_i},\ldots,
\widehat{\xi_j},\ldots,\xi_k).
\end{aligned}
\]
For each $j$,
\[
(F^*\alpha)(\xi_0,\ldots,\widehat{\xi_j},\ldots,\xi_k)
=
\alpha(\widetilde\xi_0,\ldots,\widehat{\widetilde\xi_j},\ldots,
\widetilde\xi_k)\circ f.
\]
Thus anchor compatibility gives equality of the first sums. Bracket compatibility
gives $F\circ[\xi_i,\xi_j]=[\widetilde\xi_i,\widetilde\xi_j]\circ f$, hence equality
of the second sums. Therefore $F^*(d_{\widetilde A}\alpha)=d_A(F^*\alpha)$ on
$U$. Thus, $F^\sharp$ satisfies~\eqref{eq:dgLie}, and $F$ is a first-order Lie algebroid
morphism.

\end{proof}

\subsection*{Pullback of representations}

We now show that Lie algebroid morphisms are exactly the maps along which
representations pull back.

Let $F\colon A\to \wt A$ be a smooth vector bundle morphism over
$f\colon M\to \wt M$, and let $(\wt E,\wt\nabla)$ be an $\wt A$-connection on
a vector bundle $\wt q\colon \wt E\to \wt M$.
For each $k\ge 0$, define
\[
F^\sharp_{\wt E,k}\colon f^{-1}\Omega^k_{\wt A}(-,\wt E)\to
\Omega_A^k(-,f^*\wt E)
\]
as follows.
Let $U\subseteq M$ be open and let
$[\,\wt U,\alpha\,]\in f^{-1}\Omega^k_{\wt A}(-,\wt E)(U)$.
Set
\[
F^\sharp_{\wt E,k,U}([\,\wt U,\alpha\,])_x(a_1,\ldots,a_k)
:=
\bigl(x,\alpha_{f(x)}(F(a_1),\ldots,F(a_k))\bigr)
\in (f^*\wt E)_x.
\]
For $k=0$, this is the usual pullback of local sections of $\wt E$.
We write $F^\sharp_{\wt E}:=\bigoplus_{k\ge 0}F^\sharp_{\wt E,k}$.

\begin{lemma}\label{lem:pullback-E-valued-forms}
For each $k\ge 0$, the maps $F^\sharp_{\wt E,k,U}$ are well-defined and define
a morphism of sheaves of $\R$-modules
\[
F^\sharp_{\wt E,k}\colon f^{-1}\Omega^k_{\wt A}(-,\wt E)\to
\Omega_A^k(-,f^*\wt E).
\]
Hence $F^\sharp_{\wt E}$ is a morphism of graded sheaves of $\R$-modules
\[
F^\sharp_{\wt E}\colon f^{-1}\Omega^\bullet_{\wt A}(-,\wt E)\to
\Omega^\bullet_A(-,f^*\wt E).
\]
Moreover, for every open set $U\subseteq M$, every
$\omega\in f^{-1}\Omega^k_{\wt A}(U)$, and every
$\beta\in f^{-1}\Omega^l_{\wt A}(U,\wt E)$, one has
\[
F^\sharp_{\wt E}(\omega\wedge \beta)
=
F^\sharp(\omega)\wedge F^\sharp_{\wt E}(\beta).
\]
\end{lemma}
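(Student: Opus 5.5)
The plan mirrors the proof of the scalar-valued lemma above, carried out in three steps: well-definedness at the presheaf level, smoothness of local representatives, and multiplicativity.

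\emph{Well-definedness.} First I check that the formula does not depend on the representative. Suppose $[\wt U,\alpha]=[\wt V,\beta]$ in the inverse-image presheaf. Then $\alpha$ and $\beta$ agree on some open $\wt W\supseteq f(U)$. Since the formula only uses $\alpha_{f(x)}$ with $x\in U$, the two pullbacks coincide. Compatibility with restrictions to $U'\subseteq U$ is immediate from the pointwise definition. Next, for each $x$ the map $(a_1,\ldots,a_k)\mapsto (x,\alpha_{f(x)}(F(a_1),\ldots,F(a_k)))$ is alternating and $k$-linear. It is continuous, because $F_x\colon A_x\to \wt A_{f(x)}$ is continuous linear (it is the restriction of a smooth map to a fiber, which is a closed submanifold), $\alpha_{f(x)}$ is continuous, and $\wt E_{f(x)}\to (f^*\wt E)_x$, $e\mapsto (x,e)$, is a topological isomorphism.

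\emph{Smoothness of local representatives (the main step).} Fix $x\in U$. Choose a trivialization $\wt\phi$ of $\wt A$ and $\wt\psi$ of $\wt E$ over a common neighbourhood $W$ of $f(x)$ inside $\wt U$. Choose a trivialization $\phi$ of $A$ over some $U_\phi\subseteq U\cap f^{-1}(W)$. On $f^*\wt E$, use the induced trivialization $\wt\psi^{f}$ described in the pullback-bundle construction, whose fiber part is just $\pr_2\circ\wt\psi$. Write $F$ locally as $(y,v)\mapsto (f(y),\wh F(y,v))$, with $\wh F$ smooth and linear in $v$. Then the local representative becomes
\[
(y,v_1,\ldots,v_k)\mapsto \alpha_{\wt\phi\wt\psi}\bigl(f(y),\wh F(y,v_1),\ldots,\wh F(y,v_k)\bigr),
\]
which is a composite of smooth maps. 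By the remark that smoothness in one pair of trivializations implies it in all, this suffices. The only delicate point is that we must check this with the pullback trivialization of $f^*\wt E$. For that, I would verify that the identification $(x,e)\mapsto \pr_2\wt\psi(e)$ is exactly the trivialization $\wt\psi^f$. The sheaf property of $\Omega^k_A(-,f^*\wt E)$ and the universal property of sheafification then yield the morphism of sheaves $F^\sharp_{\wt E,k}$. $\R$-linearity is evident pointwise, and summing over $k$ gives the graded morphism.

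\emph{Multiplicativity.} Take representatives $[\wt U_1,\omega]$ and $[\wt U_2,\beta]$ and restrict both to a common $\wt W\supseteq f(U)$. The wedge $\omega\wedge\beta$ is the shuffle sum of the products $\omega_{f(x)}(\ldots)\,\beta_{f(x)}(\ldots)$, with scalar multiplication in $\wt E_{f(x)}$. Evaluating on $F(a_1),\ldots,F(a_{k+l})$ gives, term by term, the shuffle sum defining $F^\sharp\omega\wedge F^\sharp_{\wt E}\beta$ at $a_1,\ldots,a_{k+l}$. This works because the identification $\wt E_{f(x)}\cong (f^*\wt E)_x$ is linear, so it commutes with multiplication by the scalar $\omega_{f(x)}(F(a_{s(1)}),\ldots)$. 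Since this identity holds on the presheaf level, it passes to sheafifications, completing the plan.
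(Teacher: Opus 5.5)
Your proposal is correct and follows essentially the same route as the paper. Both arguments proceed in three steps:
\begin{itemize}
\item well-definedness, because two representatives agree on some open $\widetilde W\supseteq f(U)$;
\item smoothness, read off from the local representative $\alpha_{\widetilde\phi\widetilde\psi}\bigl(f(x),\widehat F_{\phi\widetilde\phi}(x,u_1),\ldots,\widehat F_{\phi\widetilde\phi}(x,u_k)\bigr)$ in the pulled-back trivialization of $f^*\widetilde E$;
\item the wedge identity, checked pointwise on a common presheaf representative and then passed to the sheafification.
\end{itemize}
Your extra checks (fiberwise continuity of $F_x$, and that $\mathrm{id}\times(\mathrm{pr}_2\circ\widetilde\psi)$ is exactly the pullback trivialization) are details the paper leaves implicit.
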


\begin{proof}
The proof is the same as in the scalar valued case.
If $[\,\wt U,\alpha\,]=[\,\wt V,\beta\,]$ in
$f^{-1}\Omega^k_{\wt A}(-,\wt E)(U)$, then there exists an open set
$\wt W\subseteq \wt U\cap \wt V$ with $f(U)\subseteq \wt W$ and
$\alpha|_{\wt W}=\beta|_{\wt W}$.
Hence $F^*(\alpha|_{\wt W})=F^*(\beta|_{\wt W})$, so
$F^\sharp_{\wt E,k,U}$ is well-defined.
Compatibility with restrictions is immediate.
Smoothness is local.
Choose local trivializations
$\phi\colon A|_U\to U\times \sF_\phi$,
$\wt\phi\colon \wt A|_{\wt U}\to \wt U\times \sF_{\wt\phi}$, and
$\wt\psi\colon \wt E|_{\wt U}\to \wt U\times \sF_{\wt\psi}$ with
$f(U)\subseteq \wt U$.
Let $\wh F_{\phi\wt\phi}\colon U\times \sF_\phi\to \sF_{\wt\phi}$ be the local
representative of $F$, so that
$\wt\phi(F(\phi^{-1}(x,u)))=(f(x),\wh F_{\phi\wt\phi}(x,u))$.
Then $\wh F_{\phi\wt\phi}$ is smooth and linear in the second variable.
If $\alpha_{\wt\phi\wt\psi}$ is the local representative of $\alpha$, then the
local representative of $F^\sharp_{\wt E,k}(\alpha)$ is
\[
(F^\sharp_{\wt E,k}\alpha)_{\phi,f^*\wt\psi}(x,u_1,\ldots,u_k)
=
\alpha_{\wt\phi\wt\psi}\bigl(f(x),
\wh F_{\phi\wt\phi}(x,u_1),\ldots,\wh F_{\phi\wt\phi}(x,u_k)\bigr).
\]
Hence $F^\sharp_{\wt E,k}(\alpha)$ is smooth.
The wedge identity is checked pointwise.
After restricting to a common representative over some open
$\wt W\supseteq f(U)$, write
$\omega=[\,\wt W,\wt\omega\,]$ and $\beta=[\,\wt W,\wt\beta\,]$.
Then
\[
F^\sharp_{\wt E}(\omega\wedge \beta)
=
F^*(\wt\omega\wedge \wt\beta)
=
F^*\wt\omega\wedge F^*\wt\beta
=
F^\sharp(\omega)\wedge F^\sharp_{\wt E}(\beta).
\]
\end{proof}

Let $\phi\colon A|_U\to U\times \sF_\phi$,
$\wt\phi\colon \wt A|_{\wt U}\to \wt U\times \sF_{\wt\phi}$, and
$\wt\psi\colon \wt E|_{\wt U}\to \wt U\times \sF_{\wt\psi}$ be local
trivializations with $f(U)\subseteq \wt U$.
Let $\wh F_{\phi\wt\phi}\colon U\times \sF_\phi\to \sF_{\wt\phi}$ be the local
representative of $F$.
If $\wt\nabla$ has local first-order term
$\wt B_{\wt\phi\wt\psi}\colon \wt U\times \sF_{\wt\phi}\times \sF_{\wt\psi}\to \sF_{\wt\psi}$,
set
\[
B^F_{\phi,f^*\wt\psi}(x,u,e)
:=
\wt B_{\wt\phi\wt\psi}\bigl(f(x),\wh F_{\phi\wt\phi}(x,u),e\bigr).
\]

\begin{lemma}\label{lem:pullback-connection}
Assume that $F\colon A\to \wt A$ is a smooth vector bundle morphism over
$f\colon M\to \wt M$ and that $\wt\anch\circ F=Tf\circ \anch$.
Let $(\wt E,\wt\nabla)$ be an $\wt A$-connection on $\wt E$.
Then there exists a unique morphism of sheaves of $\R$-modules
\[
\nabla^F\colon \shA\otimes_\R \Gamma(f^*\wt E|-)\to \Gamma(f^*\wt E|-)
\]
such that, for every open set $U\subseteq M$, every $\xi\in \shA(U)$, every
$\sigma\in \Gamma(f^*\wt E|_U)$, and every choice of local trivializations as
above, one has
\begin{equation}\label{eq:pullback-connection-local}
(\nabla^F_\xi\sigma)^{f^*\wt\psi}(x)
=
\anch(\xi).\sigma^{f^*\wt\psi}(x)
+
B^F_{\phi,f^*\wt\psi}(x,\xi^\phi(x),\sigma^{f^*\wt\psi}(x)).
\end{equation}
Moreover, $\nabla^F$ is an $A$-connection on $f^*\wt E$.
\end{lemma}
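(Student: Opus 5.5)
The plan is to define $\nabla^F$ locally by formula~\eqref{eq:pullback-connection-local}, show that these local definitions agree on overlaps of trivializations, and glue them with the sheaf property. Uniqueness is then immediate: a section of $f^*\wt E$ is determined by its local representatives in a covering family of trivializations $f^*\wt\psi$, and~\eqref{eq:pullback-connection-local} prescribes these representatives. For existence, fix $U$, $\xi\in\shA(U)$, $\sigma\in\Gamma(f^*\wt E|_U)$. Around each point choose trivializations $\phi$, $\wt\phi$, $\wt\psi$ with $f(U_\phi)\subseteq \wt U$. Define a local section on $U_\phi$ by taking the right-hand side of~\eqref{eq:pullback-connection-local} as its $f^*\wt\psi$-representative. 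This representative is smooth because $\anch(\xi).\sigma^{f^*\wt\psi}$ is smooth, $\wt B_{\wt\phi\wt\psi}$ and $\wh F_{\phi\wt\phi}$ are smooth, and $\xi^\phi$ and $\sigma^{f^*\wt\psi}$ are smooth.

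The main step is independence of the choices $(\phi,\wt\phi,\wt\psi)$. Independence of $\phi$ is trivial: $\wh F_{\phi\wt\phi}(x,\xi^\phi(x))$ is the $\wt\phi$-representative of $F(\xi(x))$, which is intrinsic. Independence of $\wt\phi$ uses the transformation rule for $\wt B$ from the remark after the definition of $A$-connections, with $\wh G=\id$. That rule gives $\wt B_{\wt\phi'\wt\psi}(y,\wh K(y,w),e)=\wt B_{\wt\phi\wt\psi}(y,w,e)$. Change of $\wt\psi$ to $\wt\psi'$, with transition $\wh G$, is the delicate case. The pulled-back transition is $(x,e)\mapsto \wh G(f(x),e)$. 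Hence
\[
\sigma^{f^*\wt\psi'}(x)=\wh G(f(x),\sigma^{f^*\wt\psi}(x)),
\]
and the chain rule gives
\[
\anch(\xi).\sigma^{f^*\wt\psi'}(x)=\wh G\bigl(f(x),\anch(\xi).\sigma^{f^*\wt\psi}(x)\bigr)+d_1\wh G\bigl(f(x),\sigma^{f^*\wt\psi}(x);T_xf(\anch(\xi(x)))\bigr).
\]
The remark gives
\[
\wt B_{\wt\phi\wt\psi'}(y,w,\wh G(y,e))=\wh G(y,\wt B_{\wt\phi\wt\psi}(y,w,e))-d_1\wh G(y,e;\wt\anch^{\wt\phi}(y,w)).
\]
Here we put $y=f(x)$ and $w=\wh F_{\phi\wt\phi}(x,\xi^\phi(x))$. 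Then $\wt\anch^{\wt\phi}(y,w)$ is the representative of $\wt\anch(F(\xi(x)))$. By the hypothesis $\wt\anch\circ F=Tf\circ\anch$, this equals $T_xf(\anch(\xi(x)))$, so the two $d_1\wh G$ terms cancel. I expect this cancellation to be the crux, and it is the only place the anchor hypothesis enters. Consequently the local sections satisfy the transition rule, glue to a section $\nabla^F_\xi\sigma$, and the construction commutes with restrictions, giving a sheaf morphism.

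Finally, the connection axioms are read off from the local formula. $\R$-bilinearity and $\shcinf_M$-linearity in $\xi$ follow from the linearity of $\anch$, $\wh F$ and $\wt B$ in the fiber variables. The Leibniz rule follows from $(h\sigma)^{f^*\wt\psi}=h\,\sigma^{f^*\wt\psi}$ and the product rule for $\anch(\xi).(h\sigma^{f^*\wt\psi})$. The first-order condition holds with $B_{\phi,f^*\wt\psi}:=B^F_{\phi,f^*\wt\psi}$, which is smooth and bilinear in $(u,e)$. This formula is stated in trivializations of $f^*\wt E$ of pulled-back type. For arbitrary trivializations of $f^*\wt E$, the existence of a smooth bilinear term follows by applying the same transformation rule, now to general fiberwise linear transition maps on $f^*\wt E$.
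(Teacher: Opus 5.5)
Your proposal is correct and follows essentially the same route as the paper. You define $\nabla^F$ locally by the formula and check compatibility under change of trivializations using the coordinate-change rule for connection terms and the chain rule, with the $d_1\wh G$ terms cancelling by anchor compatibility; then you glue and read off the connection axioms. Treating the changes of $\phi$, $\wt\phi$ and $\wt\psi$ separately rather than all at once, and checking the first-order condition in arbitrary (not pulled-back) trivializations of $f^*\wt E$, are only minor refinements of the paper's argument.
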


\begin{proof}
We first check that the local formula is compatible with change of
trivializations.
Let $\phi'$ be another local trivialization of $A$ over $U$, and let
$\wt\phi'$ and $\wt\psi'$ be local trivializations of $\wt A$ and $\wt E$
over $\wt U$.
Write
$
(\phi'\circ \phi^{-1})(x,u)=(x,\wh K(x,u)),
$ $
(\wt\phi'\circ \wt\phi^{-1})(y,\wt u)=(y,\wh H(y,\wt u)),
$ and $
(\wt\psi'\circ \wt\psi^{-1})(y,e)=(y,\wh G(y,e)),
$
so that $\wh K$, $\wh H$, and $\wh G$ are smooth and fiberwise linear, and
\[
\xi^{\phi'}(x)=\wh K(x,\xi^\phi(x)),
\qquad
\sigma^{f^*\wt\psi'}(x)=\wh G(f(x),\sigma^{f^*\wt\psi}(x)).
\]
Also,
$
\wh F_{\phi'\wt\phi'}(x,\wh K(x,u))
=
\wh H(f(x),\wh F_{\phi\wt\phi}(x,u)).
$
By the coordinate change formula for connection terms, we have
\[
\begin{aligned}
\wt B_{\wt\phi'\wt\psi'}\bigl(y,\wh H(y,\wt u),\wh G(y,e)\bigr) 
 =
\wh G\bigl(y,\wt B_{\wt\phi\wt\psi}(y,\wt u,e)\bigr)
-
d_1\wh G\bigl(y,e;\wt\anch^{\wt\phi}(y,\wt u)\bigr).
\end{aligned}
\]
Now let $\sigma^\psi:=\sigma^{f^*\wt\psi}$ and
$\sigma^{\psi'}:=\sigma^{f^*\wt\psi'}$.
Since $\sigma^{\psi'}(x)=\wh G(f(x),\sigma^\psi(x))$, the chain rule gives
\[
\anch(\xi).(\sigma^{\psi'})(x)
=
d_1\wh G\bigl(f(x),\sigma^\psi(x);df(x;\anch^\phi(x,\xi^\phi(x)))\bigr)
+
\wh G\bigl(f(x),\anch(\xi).\sigma^\psi(x)\bigr).
\]
Because $\wt\anch\circ F=Tf\circ \anch$, we have
\[
df(x;\anch^\phi(x,\xi^\phi(x)))
=
\wt\anch^{\wt\phi}\bigl(f(x),\wh F_{\phi\wt\phi}(x,\xi^\phi(x))\bigr).
\]
Using these identities, we obtain
\[
\begin{aligned}
&\anch(\xi).(\sigma^{\psi'})(x)
+
B^F_{\phi',f^*\wt\psi'}(x,\xi^{\phi'}(x),\sigma^{\psi'}(x)) 
\\ &\qquad 
=
\wh G\Bigl(f(x),
\anch(\xi).\sigma^\psi(x)
+
B^F_{\phi,f^*\wt\psi}(x,\xi^\phi(x),\sigma^\psi(x))\Bigr).
\end{aligned}
\]
This is exactly the correct transition law for a section of $f^*\wt E$.
Hence the local formula defines, for each open set $U$ and each pair
$(\xi,\sigma)$, a unique section
$\nabla^F_U(\xi,\sigma)\in \Gamma(f^*\wt E|_U)$.

Compatibility with restrictions is immediate from the same formula.
Thus the family $\nabla^F_U$ defines a morphism of sheaves
$
\nabla^F\colon \shA\times \Gamma(f^*\wt E|-)\to \Gamma(f^*\wt E|-).
$
Uniqueness is clear, since the local representative is prescribed by
\eqref{eq:pullback-connection-local}.

We now check the axioms.
The formula is $\R$-linear in $\sigma$ and $\shcinf_M$-linear in $\xi$.
Let $h\in \shcinf_M(U)$.
Since $B^F_{\phi,f^*\wt\psi}(x,u,\cdot)$ is linear, we get
\[
\begin{aligned}
(\nabla^F_\xi(h\sigma))^{f^*\wt\psi}(x)
&=
\anch(\xi).(h\,\sigma^{f^*\wt\psi})(x)
+
B^F_{\phi,f^*\wt\psi}(x,\xi^\phi(x),h(x)\sigma^{f^*\wt\psi}(x)) \\
&=
h(x)(\nabla^F_\xi\sigma)^{f^*\wt\psi}(x)
+
(\anch(\xi).h)(x)\,\sigma^{f^*\wt\psi}(x).
\end{aligned}
\]
Hence $\nabla^F_\xi(h\sigma)=h\nabla^F_\xi\sigma+(\anch(\xi).h)\sigma$.
Thus $\nabla^F$ is an $A$-connection on $f^*\wt E$.
\end{proof}

The next lemma is the local form of the morphism condition.

\begin{lemma}\label{lem:local-form-morphism}
Assume that $F\colon A\to \wt A$ is a first-order Lie algebroid morphism over
$f\colon M\to \wt M$.
Let $\phi\colon A|_U\to U\times \sF_\phi$ and
$\wt\phi\colon \wt A|_{\wt U}\to \wt U\times \sF_{\wt\phi}$ be local
trivializations with $f(U)\subseteq \wt U$.
Let $C_\phi$ and $C_{\wt\phi}$ be the corresponding local bracket terms, and
let $\wh F_{\phi\wt\phi}\colon U\times \sF_\phi\to \sF_{\wt\phi}$ be the local
representative of $F$.
Then, for all $x\in U$ and $u,v\in \sF_\phi$, one has
\[
\wt\anch^{\wt\phi}\bigl(f(x),\wh F_{\phi\wt\phi}(x,u)\bigr)
=
df(x;\anch^\phi(x,u))
\]
and
\begin{equation}\label{eq:local-form-morphism}
\begin{aligned}
\wh F_{\phi\wt\phi}\bigl(x,C_\phi(x,u,v)\bigr)
&=
d_1\wh F_{\phi\wt\phi}\bigl(x,v;\anch^\phi(x,u)\bigr)
-
d_1\wh F_{\phi\wt\phi}\bigl(x,u;\anch^\phi(x,v)\bigr) \\
&\quad +
C_{\wt\phi}\bigl(f(x),
\wh F_{\phi\wt\phi}(x,u),\wh F_{\phi\wt\phi}(x,v)\bigr).
\end{aligned}
\end{equation}
\end{lemma}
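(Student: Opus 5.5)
The anchor identity is just part~\ref{item:anchorcomp} of Theorem~\ref{thm:braanchcomp}, written in the trivializations $\phi$ and $\wt\phi$. Indeed, $\wt\phi(F(\phi^{-1}(x,u)))=(f(x),\wh F_{\phi\wt\phi}(x,u))$. Hence $\wt\anch(F(\phi^{-1}(x,u)))$ has local representative $\wt\anch^{\wt\phi}(f(x),\wh F_{\phi\wt\phi}(x,u))$, while $T_xf(\anch(\phi^{-1}(x,u)))$ has representative $df(x;\anch^\phi(x,u))$ in a chart.

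For \eqref{eq:local-form-morphism}, I would not use part~\ref{item:bracketcomp} of Theorem~\ref{thm:braanchcomp}. It needs $F$-related sections, which need not exist when $f$ is not a local diffeomorphism. Instead I would apply the dg condition \eqref{eq:dgLie} in degree one to test forms. Fix $x\in U$ and $\lambda\in\sF_{\wt\phi}'$. Define $\alpha_\lambda\in\Omega^1_{\wt A}(\wt U)$ by $(\alpha_\lambda)_y(b)=\lambda(\pr_2\wt\phi(b))$, as in the proof of Theorem~\ref{thm:braanchcomp}; its local representative $(y,w)\mapsto\lambda(w)$ is smooth. Let $\xi_u,\xi_v\in\shA(U)$ be the $\phi$-constant sections. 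Evaluate both sides of $F^*(d_{\wt A}\alpha_\lambda)=d_A(F^*\alpha_\lambda)$ at $x$ on $(\xi_u(x),\xi_v(x))$.

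The right side is computed directly from \eqref{eq:dA}. We have $(F^*\alpha_\lambda)(\xi_v)(y)=\lambda(\wh F_{\phi\wt\phi}(y,v))$, so $\anch(\xi_u).$ of it at $x$ equals $\lambda(d_1\wh F_{\phi\wt\phi}(x,v;\anch^\phi(x,u)))$. The bracket term is $-\lambda(\wh F_{\phi\wt\phi}(x,C_\phi(x,u,v)))$, by Lemma~\ref{lemma:characterization}, since $[\xi_u,\xi_v]^\phi(x)=C_\phi(x,u,v)$. For the left side, $d_{\wt A}\alpha_\lambda\in\Omega^2_{\wt A}(\wt U)$ is evaluated pointwise at $f(x)$ on $\wh F(x,u)$ and $\wh F(x,v)$. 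It can therefore be computed with the $\wt\phi$-constant sections through these values. The local formula \eqref{eq:dAnablalocal} with trivial connection gives, since the representative of $\alpha_\lambda$ is constant in $y$,
\[
(d_{\wt A}\alpha_\lambda)_{\wt\phi}(y,w_0,w_1)=-\lambda\bigl(C_{\wt\phi}(y,w_0,w_1)\bigr).
\]
Hence the left side equals $-\lambda(C_{\wt\phi}(f(x),\wh F(x,u),\wh F(x,v)))$.

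Equating the two sides yields $\lambda$ applied to \eqref{eq:local-form-morphism}. Since $\lambda\in\sF_{\wt\phi}'$ is arbitrary, Hahn-Banach gives the identity in $\sF_{\wt\phi}$.

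The main obstacle is the bookkeeping in passing between the inverse-image sheaf and $\Omega_A$. Concretely, $F^\sharp[\wt U,\alpha_\lambda]=F^*\alpha_\lambda$ on $U$. Also, the representative $[\wt U,d_{\wt A}\alpha_\lambda]$ of $f^{-1}d_{\wt A}[\wt U,\alpha_\lambda]$ pulls back to $F^*(d_{\wt A}\alpha_\lambda)$; this is immediate on presheaf representatives since $f(U)\subseteq\wt U$. The remaining points are the signs and the fact that the $d_1\wh F$ terms arise only on the $A$ side. Both follow because the $\wt\phi$-representative of $\alpha_\lambda$ is independent of the base point.
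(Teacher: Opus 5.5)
Your proposal is correct and follows the paper's proof. The paper also reads the anchor identity off Theorem~\ref{thm:braanchcomp}\,\ref{item:anchorcomp} in the trivializations. It gets the bracket identity the same way you do: evaluate $F^\sharp(d_{\wt A}\alpha_\lambda)=d_A(F^\sharp\alpha_\lambda)$ at $x$ on the $\phi$-constant sections $\xi_u,\xi_v$, compute the left side with $\wt\phi$-constant sections so that only $-\lambda(C_{\wt\phi})$ survives, and conclude by Hahn--Banach.
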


\begin{proof}
The anchor identity is the local form of
Theorem~\ref{thm:braanchcomp}\,\ref{item:anchorcomp}.
Now we prove \eqref{eq:local-form-morphism}.
Fix $x\in U$ and $u,v\in \sF_\phi$.
Let $\xi_u,\xi_v\in \Gamma(A|_U)$ be the constant local sections defined by $\xi_u(y) = \phi^{-1}(y,u)$
and $\xi_v(y)=\phi^{-1}(y,v)$.
Fix $\lambda\in \sF_{\wt\phi}'$, and define
$\alpha_\lambda\in \Omega^1_{\wt A}(\wt U)$ by
$\alpha_\lambda(y)(b):=\lambda(\pr_2(\wt\phi(b)))$.
Since $F$ is a Lie algebroid morphism, we have
$F^\sharp(d_{\wt A}\alpha_\lambda)=d_A(F^\sharp\alpha_\lambda)$.
Evaluating at $x$ on $\xi_u,\xi_v$ gives
\[
(F^\sharp(d_{\wt A}\alpha_\lambda))(\xi_u,\xi_v)(x)
=
d_A(F^\sharp\alpha_\lambda)(\xi_u,\xi_v)(x).
\]

Set
$\wt u:=\wh F_{\phi\wt\phi}(x,u)$ and $\wt v:=\wh F_{\phi\wt\phi}(x,v)$,
and let $\wt\xi_u,\wt\xi_v\in \Gamma(\wt A|_{\wt U})$ be the constant local
sections defined by $\wt{\xi_{u}}(z)=\wt\phi^{-1}(z,\wt u)$ and $\wt{\xi_{v}}(z)=\wt\phi^{-1}(z,\wt v)$.
Since $\alpha_\lambda(\wt\xi_u)$ and $\alpha_\lambda(\wt\xi_v)$ are constant,
the anchor terms vanish, and
\[
(F^\sharp(d_{\wt A}\alpha_\lambda))(\xi_u,\xi_v)(x)
=
-\lambda\bigl(C_{\wt\phi}(f(x),\wt u,\wt v)\bigr).
\]

On the other hand,
$$(F^\sharp\alpha_\lambda)(\xi_v)(y)=\lambda(\wh F_{\phi\wt\phi}(y,v)) \quad\text{and}\quad
(F^\sharp\alpha_\lambda)(\xi_u)(y)=\lambda(\wh F_{\phi\wt\phi}(y,u)).$$
Using \eqref{eq:dA} and the fact that
$[\xi_u,\xi_v]^\phi(x)=C_\phi(x,u,v)$, we get
\[
\begin{aligned}
d_A(F^\sharp\alpha_\lambda)(\xi_u,\xi_v)(x)
&=
\lambda\bigl(d_1\wh F_{\phi\wt\phi}(x,v;\anch^\phi(x,u))\bigr) 
-
\lambda\bigl(d_1\wh F_{\phi\wt\phi}(x,u;\anch^\phi(x,v))\bigr) \\&\quad
 -
\lambda\bigl(\wh F_{\phi\wt\phi}(x,C_\phi(x,u,v))\bigr).
\end{aligned}
\]
Comparing the two expressions and using Hahn-Banach theorem, we obtain \eqref{eq:local-form-morphism}.
\end{proof}

\begin{lemma}\label{lem:pullback-curvature}
Let $F\colon A\to \wt A$ be a Lie algebroid morphism over $f\colon M\to \wt M$,
and let $(\wt E,\wt\nabla)$ be an $\wt A$-connection.
Let $\nabla^F$ be the induced $A$-connection on $f^*\wt E$.
Choose local trivializations
$\phi\colon A|_U\to U\times \sF_\phi$,
$\wt\phi\colon \wt A|_{\wt U}\to \wt U\times \sF_{\wt\phi}$, and
$\wt\psi\colon \wt E|_{\wt U}\to \wt U\times \sF_{\wt\psi}$ with
$f(U)\subseteq \wt U$.
Then, for all $x\in U$, $u,v\in \sF_\phi$, and $e\in \sF_{\wt\psi}$,
\begin{equation}\label{eq:curv-pullback}
R_{\nabla^F,\phi,f^*\wt\psi}(x,u,v,e)
=
R_{\wt\nabla,\wt\phi\wt\psi}\bigl(f(x),
\wh F_{\phi\wt\phi}(x,u),\wh F_{\phi\wt\phi}(x,v),e\bigr).
\end{equation}
In particular, if $\wt\nabla$ is flat, then $\nabla^F$ is flat.
\end{lemma}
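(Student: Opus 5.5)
The approach is to compute both sides of \eqref{eq:curv-pullback} by evaluating the curvature on constant local sections and comparing the local expressions. First I will fix what the local curvature representative means. Since $\nabla^F$ and $\wt\nabla$ are first-order connections, their curvatures are tensorial. For $\nabla^F$, $\shcinf_M$-linearity in each slot follows from the Leibniz rules. It also follows directly from the local formula \eqref{eq:pullback-connection-local} together with \eqref{eq:first-order}. So $R_{\nabla^F,\phi,f^*\wt\psi}(x,u,v,e)$ is the $f^*\wt\psi$-representative at $x$ of $R_{\nabla^F}(\xi_u,\xi_v)\sigma_e$, where $\xi_u,\xi_v$ are $\phi$-constant sections and $\sigma_e$ is the $f^*\wt\psi$-constant section. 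Expanding with \eqref{eq:pullback-connection-local}, and writing $B:=B^F_{\phi,f^*\wt\psi}$ and $\anch_u:=\anch^\phi(x,u)$, I will obtain
\[
\begin{aligned}
R_{\nabla^F}(x,u,v,e)={}&d_1B(x,v,e;\anch_u)-d_1B(x,u,e;\anch_v)
+B(x,u,B(x,v,e))\\
&-B(x,v,B(x,u,e))-B(x,C_\phi(x,u,v),e).
\end{aligned}
\]
This uses $\anch(\xi_u).\sigma_e=0$ and $[\xi_u,\xi_v]^\phi(x)=C_\phi(x,u,v)$. The same expansion for $\wt\nabla$ gives the analogous expression for $R_{\wt\nabla,\wt\phi\wt\psi}(y,\wt u,\wt v,e)$, with $\wt B$, $\wt\anch^{\wt\phi}$ and $C_{\wt\phi}$ in place of $B$, $\anch^\phi$ and $C_\phi$.

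Next I will substitute $B(x,u,e)=\wt B(f(x),\wh F(x,u),e)$ and differentiate by the chain rule. This gives
\[
d_1B(x,v,e;\anch_u)=d_1\wt B\bigl(f(x),\wh F(x,v),e;df(x;\anch_u)\bigr)+\wt B\bigl(f(x),d_1\wh F(x,v;\anch_u),e\bigr).
\]
By the anchor identity of Lemma~\ref{lem:local-form-morphism}, $df(x;\anch_u)=\wt\anch^{\wt\phi}(f(x),\wt u)$, where $\wt u:=\wh F(x,u)$. The $d_1\wt B$ terms and the two quadratic $B$-terms then match the corresponding terms of $R_{\wt\nabla}$ at $(f(x),\wt u,\wt v,e)$.

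What remains on the $\nabla^F$ side is
\[
\wt B\bigl(f(x),\,d_1\wh F(x,v;\anch_u)-d_1\wh F(x,u;\anch_v)-\wh F(x,C_\phi(x,u,v)),\,e\bigr).
\]
Here I use that $\wt B(y,\cdot,e)$ is linear. By \eqref{eq:local-form-morphism} the middle argument equals $-C_{\wt\phi}(f(x),\wt u,\wt v)$. This is exactly the remaining term $-\wt B(f(x),C_{\wt\phi}(f(x),\wt u,\wt v),e)$ on the $\wt\nabla$ side, which proves \eqref{eq:curv-pullback}. The flatness statement is immediate: if $\wt\nabla$ is flat, the right-hand side of \eqref{eq:curv-pullback} vanishes for all $u,v,e$, and tensoriality then gives $R_{\nabla^F}=0$ on arbitrary local sections.

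The main obstacle is the bookkeeping in the first step. I must justify that the curvature may be computed on constant sections, that is, that it is tensorial, and I must track the derivative of the connection term carefully. Lemma~\ref{lem:local-form-morphism} does the essential work, since it supplies exactly the cancellation of the $d_1\wh F$ terms against the bracket term. That lemma requires $F$ to be a morphism and not merely anchor-compatible.
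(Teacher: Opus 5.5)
Your proposal is correct and follows essentially the same route as the paper. Both arguments expand the curvature on $\phi$-constant sections $\xi_u,\xi_v$, differentiate $B^F(x,u,e)=\wt B(f(x),\wh F_{\phi\wt\phi}(x,u),e)$ by the chain rule, and use anchor compatibility together with \eqref{eq:local-form-morphism} to cancel the $d_1\wh F_{\phi\wt\phi}$ terms against the bracket term. You additionally take $\sigma$ constant and spell out why the curvature may be computed pointwise, which the paper leaves implicit in its ``direct expansion''. For that step, your local-formula argument via \eqref{eq:pullback-connection-local} and \eqref{eq:first-order} is the right justification, since $\shcinf_M$-linearity alone does not give pointwise dependence for infinite-rank bundles.
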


\begin{proof}
Write $\wt B:=\wt B_{\wt\phi\wt\psi}$ and
$B^F:=B^F_{\phi,f^*\wt\psi}$.
Then
$B^F(x,u,e)=\wt B(f(x),\wh F_{\phi\wt\phi}(x,u),e)$.
Let $C_\phi$ and $C_{\wt\phi}$ be the local bracket terms of $A$ and $\wt A$.
Fix $u,v\in \sF_\phi$.
Let $\xi_u,\xi_v$ be the constant local sections of $A|_U$ 
defined by $\xi_u(y) = \phi^{-1}(y,u)$
and $\xi_v(y)=\phi^{-1}(y,v)$.
If $\sigma$ is a local section of $f^*\wt E|_U$ and
$\tau:=\sigma^{f^*\wt\psi}$, then
$(\nabla^F_{\xi_u}\sigma)^{f^*\wt\psi}(x)=
d\tau(x;\anch^\phi(x,u))+B^F(x,u,\tau(x))$.
A direct expansion of
$R_{\nabla^F}(\xi_u,\xi_v)\sigma=
\nabla^F_{\xi_u}\nabla^F_{\xi_v}\sigma-
\nabla^F_{\xi_v}\nabla^F_{\xi_u}\sigma-
\nabla^F_{[\xi_u,\xi_v]}\sigma$
gives
\[
\begin{aligned}
R_{\nabla^F,\phi,f^*\wt\psi}(x,u,v,e)
&=
d_1B^F(x,v,e;\anch^\phi(x,u))
-
d_1B^F(x,u,e;\anch^\phi(x,v)) \\
&\quad +
B^F(x,u,B^F(x,v,e))
-
B^F(x,v,B^F(x,u,e)) \\
&\quad -
B^F(x,C_\phi(x,u,v),e).
\end{aligned}
\]
Set
$\wt u:=\wh F_{\phi\wt\phi}(x,u)$ and
$\wt v:=\wh F_{\phi\wt\phi}(x,v)$.
By the chain rule,
\[
\begin{aligned}
d_1B^F(x,u,e;\anch^\phi(x,v))
&=
d_1\wt B\bigl(f(x),\wt u,e;df(x;\anch^\phi(x,v))\bigr) \\
&\quad +
\wt B\bigl(f(x),d_1\wh F_{\phi\wt\phi}(x,u;\anch^\phi(x,v)),e\bigr)
\end{aligned}
\]
and similarly for $d_1B^F(x,v,e;\anch^\phi(x,u))$.
Using anchor compatibility and \eqref{eq:local-form-morphism}, we obtain
$
df(x;\anch^\phi(x,u))=\wt\anch^{\wt\phi}(f(x),\wt u)$ (similarly for $v$ instead of $u$)
and
\[
\begin{aligned}
\wh F_{\phi\wt\phi}\bigl(x,C_\phi(x,u,v)\bigr)
&=
d_1\wh F_{\phi\wt\phi}\bigl(x,v;\anch^\phi(x,u)\bigr)
-
d_1\wh F_{\phi\wt\phi}\bigl(x,u;\anch^\phi(x,v)\bigr) \\
&\quad +
C_{\wt\phi}(f(x),\wt u,\wt v).
\end{aligned}
\]
Substituting these identities into the formula for
$R_{\nabla^F,\phi,f^*\wt\psi}(x,u,v,e)$, the terms containing
$d_1\wh F_{\phi\wt\phi}$ cancel.
The remaining terms are 
$
R_{\wt\nabla,\wt\phi\wt\psi}\bigl(f(x),\wt u,\wt v,e\bigr)
$ exactly.
This proves~\eqref{eq:curv-pullback}.
If $\wt\nabla$ is flat, then the right-hand side vanishes in every
trivialization.
Hence $\nabla^F$ is flat.
\end{proof}

\begin{theorem}\label{thm:pullback-representation}
Let $F\colon A\to \wt A$ be a first-order Lie algebroid morphism over
$f\colon M\to \wt M$, and let $(\wt E,\wt\nabla)$ be an $\wt A$-connection.
Let $\nabla^F$ be the induced $A$-connection on $f^*\wt E$.
Then
\[
F^\sharp_{\wt E}\circ f^{-1}d_{\wt A,\wt\nabla}
=
d_{A,\nabla^F}\circ F^\sharp_{\wt E}
\]
as morphisms of degree $+1$ from
$f^{-1}\Omega^\bullet_{\wt A}(-,\wt E)$ to $\Omega^\bullet_A(-,f^*\wt E)$.
If, moreover, $(\wt E,\wt\nabla)$ is a representation of $\wt A$, then
$(f^*\wt E,\nabla^F)$ is a representation of $A$.
\end{theorem}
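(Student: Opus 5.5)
The plan is to check the intertwining identity locally and pointwise, in trivializations. The second assertion will then follow from Lemma~\ref{lem:pullback-curvature}. Fix $x\in U$ and choose trivializations $\phi$ of $A$ near $x$, and $\wt\phi$, $\wt\psi$ of $\wt A$ and $\wt E$ over some $\wt U\supseteq f(U)$, shrinking $U$ if necessary. Both sides of the identity are sheaf morphisms, and equality of sections of $\Omega^{k+1}_A(-,f^*\wt E)$ is pointwise. It therefore suffices to fix a representative $[\wt U,\alpha]$ and compare the local representatives of
\[
F^\sharp_{\wt E}(d_{\wt A,\wt\nabla}\alpha)
\quad\text{and}\quad
d_{A,\nabla^F}(F^*\alpha)
\]
at $(x,u_0,\ldots,u_k)$. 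Since the presheaf identity suffices (sheafification), no further gluing is needed.

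For the left side, I would use the local formula \eqref{eq:dAnablalocal} on $\wt A$. I evaluate it at $f(x)$ with the arguments $\wt u_i:=\wh F_{\phi\wt\phi}(x,u_i)$. This gives three groups of terms:
\begin{itemize}
\item the anchor terms $d_1\alpha_{\wt\phi\wt\psi}(f(x),\ldots;\wt\anch^{\wt\phi}(f(x),\wt u_i))$;
\item the connection terms $\wt B(f(x),\wt u_i,\alpha_{\wt\phi\wt\psi}(f(x),\ldots))$;
\item the bracket terms $\alpha_{\wt\phi\wt\psi}(f(x),C_{\wt\phi}(f(x),\wt u_i,\wt u_j),\ldots)$.
\end{itemize}
For the right side, I apply \eqref{eq:dAnablalocal} to $A$ with the connection $\nabla^F$. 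By Lemma~\ref{lem:pullback-E-valued-forms}, $F^*\alpha$ has local representative $\beta(y,v_1,\ldots,v_k)=\alpha_{\wt\phi\wt\psi}(f(y),\wh F(y,v_1),\ldots,\wh F(y,v_k))$. By Lemma~\ref{lem:pullback-connection}, the connection term is $B^F(x,u,e)=\wt B(f(x),\wh F(x,u),e)$, so the connection terms on the two sides agree immediately.

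Next I differentiate $\beta$ along $\anch^\phi(x,u_i)$ using the chain rule. The derivative through $f$ gives $d_1\alpha_{\wt\phi\wt\psi}$ in the direction $df(x;\anch^\phi(x,u_i))$. By the anchor part of Lemma~\ref{lem:local-form-morphism}, this equals the left-hand anchor terms. The derivative through $\wh F$ in each slot $j\neq i$ produces extra terms $\alpha_{\wt\phi\wt\psi}(f(x),\ldots,d_1\wh F(x,u_j;\anch^\phi(x,u_i)),\ldots)$. The bracket terms on the right are $\alpha_{\wt\phi\wt\psi}(f(x),\wh F(x,C_\phi(x,u_i,u_j)),\ldots)$. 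I substitute \eqref{eq:local-form-morphism} into these, splitting each into $C_{\wt\phi}(f(x),\wt u_i,\wt u_j)$ plus the two $d_1\wh F$ terms. The $C_{\wt\phi}$ parts reproduce the left-hand bracket terms.

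The main obstacle is the sign bookkeeping showing that the remaining $d_1\wh F$ terms cancel. I will move the differentiated slot $j$ to the front using alternation, which introduces a sign $(-1)^{j}$ or $(-1)^{j-1}$ depending on whether $j<i$ or $j>i$. I will then match these against the terms $(-1)^{i+j}\bigl(d_1\wh F(x,u_j;\anch^\phi(x,u_i))-d_1\wh F(x,u_i;\anch^\phi(x,u_j))\bigr)$ from the bracket sum. This is the same cancellation already used for well-definedness in the proof of the lemma on $d_{A,\nabla}$. The cleanest route is to check it for $k=1$ and then argue termwise for general $k$.

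Finally, suppose $\wt\nabla$ is flat. Then $\nabla^F$ is flat by Lemma~\ref{lem:pullback-curvature}. It also satisfies the first-order condition with term $B^F$, so $(f^*\wt E,\nabla^F)$ is a representation of $A$.
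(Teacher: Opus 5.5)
Your proposal is correct and follows essentially the same route as the paper. Both compare local representatives in trivializations $\phi,\wt\phi,\wt\psi$, using the chain rule, anchor compatibility, $B^F(x,u,e)=\wt B(f(x),\wh F_{\phi\wt\phi}(x,u),e)$, and \eqref{eq:local-form-morphism} to absorb the $d_1\wh F_{\phi\wt\phi}$ terms into the bracket terms, with flatness then taken from Lemma~\ref{lem:pullback-curvature}; your sign bookkeeping ($(-1)^{j}$ versus $(-1)^{j-1}$ when moving the differentiated slot to the front) is exactly what justifies the paper's pairwise combination for $i<j$.
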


\begin{proof}
Fix $k\ge 0$, choose local trivializations
$\phi\colon A|_U\to U\times \sF_\phi$,
$\wt\phi\colon \wt A|_{\wt U}\to \wt U\times \sF_{\wt\phi}$, and
$\wt\psi\colon \wt E|_{\wt U}\to \wt U\times \sF_{\wt\psi}$ with
$f(U)\subseteq \wt U$, and 
let $[\,\tilde U,\alpha\,]\in f^{-1}\Omega^k_{\tilde A}(-,\tilde E)(U)$.
Set $\beta:=F^\sharp_{\wt E}([\wt U,\alpha])\in \Omega_A^k(U,f^*\wt E)$.
The local representative of $\beta$ is
\[
\beta_{\phi,f^*\wt\psi}(x,u_1,\ldots,u_k)
=
\alpha_{\wt\phi\wt\psi}\bigl(f(x),
\wh F_{\phi\wt\phi}(x,u_1),\ldots,\wh F_{\phi\wt\phi}(x,u_k)\bigr).
\]
Fix $x\in U$ and $u_0,\ldots,u_k\in \sF_\phi$, and set
$\wt u_i:=\wh F_{\phi\wt\phi}(x,u_i)$.
Using the local formula for $d_{A,\nabla^F}$ with constant local sections, we get
\[
\begin{aligned}
&(d_{A,\nabla^F}\beta)_{\phi,f^*\wt\psi}(x,u_0,\ldots,u_k) \\
&=
\sum_{i=0}^k (-1)^i
\Bigl(
d_1\beta_{\phi,f^*\wt\psi}(x,u_0,\ldots,\wh u_i,\ldots,u_k;
\anch^\phi(x,u_i))\Bigr) \\ &\quad
+   \sum_{i=0}^k (-1)^i
\Bigl(
B^F_{\phi,f^*\wt\psi}\bigl(x,u_i,
\beta_{\phi,f^*\wt\psi}(x,u_0,\ldots,\wh u_i,\ldots,u_k)\bigr)
\Bigr) \\
&\quad +
\sum_{0\le i<j\le k} (-1)^{i+j}
\beta_{\phi,f^*\wt\psi}\bigl(x,C_\phi(x,u_i,u_j),
u_0,\ldots,\wh u_i,\ldots,\wh u_j,\ldots,u_k\bigr).
\end{aligned}
\]
The chain rule gives
\[
\begin{aligned}
&d_1\beta_{\phi,f^*\wt\psi}(x,u_0,\ldots,\wh u_i,\ldots,u_k;
\anch^\phi(x,u_i)) \\
&\qquad =
d_1\alpha_{\wt\phi\wt\psi}(f(x),
\wt u_0,\ldots,\wh{\wt u_i},\ldots,\wt u_k;
df(x;\anch^\phi(x,u_i))) \\
&\qquad\quad +
\sum_{j\neq i}
\alpha_{\wt\phi\wt\psi}\bigl(f(x),
\wt u_0,\ldots,d_1\wh F_{\phi\wt\phi}(x,u_j;\anch^\phi(x,u_i)),
\ldots,\wh{\wt u_i},\ldots,\wt u_k\bigr).
\end{aligned}
\]
Since
$B^F_{\phi,f^*\wt\psi}(x,u_i,e)=
\wt B_{\wt\phi\wt\psi}(f(x),\wt u_i,e)$
and
$df(x;\anch^\phi(x,u_i))=\wt\anch^{\wt\phi}(f(x),\wt u_i)$,
the first sum becomes the corresponding first sum for
$d_{\wt A,\wt\nabla}\alpha$, plus the extra terms containing derivatives of
$\wh F_{\phi\wt\phi}$.

We now combine these extra terms with the bracket term.
For each pair $i<j$, the contribution of the derivatives of
$\wh F_{\phi\wt\phi}$ in the slots $i$ and $j$, together with the bracket term,
is
\[
\begin{aligned}
(-1)^{i+j}\,
\alpha_{\wt\phi\wt\psi}\Bigl(
f(x),&
\wh F_{\phi\wt\phi}(x,C_\phi(x,u_i,u_j)) 
-
d_1\wh F_{\phi\wt\phi}(x,u_j;\anch^\phi(x,u_i))
\\ &
+
d_1\wh F_{\phi\wt\phi}(x,u_i;\anch^\phi(x,u_j)),
\wt u_0,\ldots,\wh{\wt u_i},\ldots,\wh{\wt u_j},\ldots,\wt u_k
\Bigr).
\end{aligned}
\]
By \eqref{eq:local-form-morphism}, this equals
\[
(-1)^{i+j}\,
\alpha_{\wt\phi\wt\psi}\Bigl(
f(x),
C_{\wt\phi}(f(x),\wt u_i,\wt u_j),
\wt u_0,\ldots,\wh{\wt u_i},\ldots,\wh{\wt u_j},\ldots,\wt u_k
\Bigr).
\]
Hence the whole expression is exactly
$
(d_{\wt A,\wt\nabla}\alpha)_{\wt\phi\wt\psi}(f(x),\wt u_0,\ldots,\wt u_k),
$
that is,
$
d_{A,\nabla^F}\beta
=
F^\sharp_{\wt E}(d_{\wt A,\wt\nabla}\alpha)
$
in local coordinates.
Therefore
$F^\sharp_{\wt E}\circ f^{-1}d_{\wt A,\wt\nabla}
=
d_{A,\nabla^F}\circ F^\sharp_{\wt E}$.

If $(\wt E,\wt\nabla)$ is flat, then $\nabla^F$ is flat by
Lemma~\ref{lem:pullback-curvature}.
Hence $(f^*\wt E,\nabla^F)$ is a representation of $A$.
\end{proof}

\begin{corollary}\label{cor:pullback-representations-characterization}
For a smooth vector bundle morphism $F\colon A\to \wt A$ over
$f\colon M\to \wt M$, the following are equivalent:
\begin{enumerate}\itemsep0em
\item \label{item:repres-pullback-cor1} $F$ is a first-order Lie algebroid morphism;
\item \label{item:repres-pullback-cor2} for every representation
$(\wt E,\wt\nabla)$ of $\wt A$, the local formula
\eqref{eq:pullback-connection-local} defines a flat $A$-connection
$\nabla^F$ on $f^*\wt E$, and
\[
F^\sharp_{\wt E}\circ f^{-1}d_{\wt A,\wt\nabla}
=
d_{A,\nabla^F}\circ F^\sharp_{\wt E}.
\]
\end{enumerate}
\end{corollary}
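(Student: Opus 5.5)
The implication \ref{item:repres-pullback-cor1}$\Rightarrow$\ref{item:repres-pullback-cor2} is immediate from the earlier results. If $F$ is a first-order Lie algebroid morphism, Theorem~\ref{thm:braanchcomp}\,\ref{item:anchorcomp} gives $\wt\anch\circ F=Tf\circ\anch$. Lemma~\ref{lem:pullback-connection} then shows that \eqref{eq:pullback-connection-local} defines an $A$-connection $\nabla^F$ on $f^*\wt E$. Theorem~\ref{thm:pullback-representation} gives both flatness (via Lemma~\ref{lem:pullback-curvature}) and the intertwining identity $F^\sharp_{\wt E}\circ f^{-1}d_{\wt A,\wt\nabla}=d_{A,\nabla^F}\circ F^\sharp_{\wt E}$.

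For \ref{item:repres-pullback-cor2}$\Rightarrow$\ref{item:repres-pullback-cor1}, I will specialize to the trivial representation. Take $\wt E=\wt M\times\R$ with $\wt\nabla_{\wt\xi}g=\wt\anch(\wt\xi).g$. This is a representation: its connection term is $\wt B\equiv 0$, and flatness is exactly Lemma~\ref{lemma:anchsheaf}. Then $f^*\wt E=M\times\R$ canonically, and in the global trivialization the pulled-back term $B^F$ is also zero. So the local formula \eqref{eq:pullback-connection-local} reduces to $\nabla^F_\xi\sigma=\anch(\xi).\sigma$, the trivial $A$-connection. Hence $d_{A,\nabla^F}=d_A$ and $d_{\wt A,\wt\nabla}=d_{\wt A}$ on scalar forms. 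Under the identification $\Omega^k_A(-,M\times\R)=\Omega^k_A$, the definition of $F^\sharp_{\wt E,k}$ coincides with $F^\sharp$ (for $k=0$ it is $f^\sharp$). The assumed identity is therefore exactly \eqref{eq:dgLie}. Lemma~\ref{lem:pullback-E-valued-forms} (or the earlier lemma on $F^\sharp$) already shows that $F^\sharp$ is a morphism of sheaves of graded algebras, so $F$ is a first-order Lie algebroid morphism.

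The only subtle point is a possible reading of \ref{item:repres-pullback-cor2} in which the anchor compatibility $\wt\anch\circ F=Tf\circ\anch$ is not assumed, since Lemma~\ref{lem:pullback-connection} used that hypothesis only to show the local formula glues. For the trivial representation, $\wt B=0$ and the trivialization is global, so no gluing is needed and this issue does not arise. The degree-$0$ part of the identity then gives $d\sigma(\anch(a))$-type equalities, which recover anchor compatibility exactly as in the proof of Theorem~\ref{thm:braanchcomp}, so nothing is circular. The main point I would check carefully is that the identification $f^*(\wt M\times\R)\cong M\times\R$ transports $F^\sharp_{\wt E}$ and the inverse-image differential to $F^\sharp$ and $f^{-1}d_{\wt A}$ compatibly on the presheaf level. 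This is routine from the definitions.
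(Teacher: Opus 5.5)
Your proposal is correct and follows the paper's proof. The forward implication is Theorem~\ref{thm:pullback-representation}, with anchor compatibility from Theorem~\ref{thm:braanchcomp} making $\nabla^F$ well defined via Lemma~\ref{lem:pullback-connection}; the converse specializes to the trivial representation $\wt M\times\R$, where $d_{\wt A,\wt\nabla}=d_{\wt A}$, $d_{A,\nabla^F}=d_A$, and the assumed identity is exactly \eqref{eq:dgLie}. Your remarks on why no gluing issue arises for the trivial bundle are a harmless refinement of what the paper leaves implicit.
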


\begin{proof}
The implication
\ref{item:repres-pullback-cor1} $\Rightarrow$
\ref{item:repres-pullback-cor2}
is Theorem~\ref{thm:pullback-representation}.

For
\ref{item:repres-pullback-cor2} $\Rightarrow$
\ref{item:repres-pullback-cor1},
take $\wt E=\wt M\times \R$ with the trivial flat $\wt A$-connection.
Then
$\Omega^\bullet_{\wt A}(-,\wt E)=\Omega^\bullet_{\wt A}$,
$\Omega^\bullet_A(-,f^*\wt E)=\Omega^\bullet_A$,
$d_{\wt A,\wt\nabla}=d_{\wt A}$, and
$d_{A,\nabla^F}=d_A$.
Hence
\[
F^\sharp\circ f^{-1}d_{\wt A}=d_A\circ F^\sharp,
\]
which is exactly Definition~\ref{def:liealgebroidmor}.
\end{proof}

\begin{proposition}
\label{prop:cohomology_functoriality}
Let $F\colon A\to \wt A$ be a first-order Lie algebroid morphism over $f\colon M\to \wt M$, and let $(\wt E,\wt\nabla)$ be a representation of $\wt A$.
Then the induced representation $(f^*\wt E,\nabla^F)$ of $A$ gives a morphism of complexes of sheaves
\[
F^\sharp_{\wt E}\colon f^{-1}\Omega^\bullet_{\wt A}(-,\wt E)\to \Omega^\bullet_A(-,f^*\wt E).
\]
Consequently, it induces a morphism of cohomology sheaves
\[
F^*\colon f^{-1}\shH^k_{\wt A}(-,\wt E)\to \shH^k_A(-,f^*\wt E).
\]
Moreover, these morphisms are functorial. More precisely, if $F\colon A_1\to A_2$ and $G\colon A_2\to A_3$
are first-order Lie algebroid morphisms over $f\colon M_1\to M_2$
and $g\colon M_2\to M_3$, respectively,
and if $(E,\nabla)$ is a representation of $A_3$, then, under the canonical identifications
$f^{-1}g^{-1}\cong (g\circ f)^{-1}$ and $f^*g^*E\cong (g\circ f)^*E$, one has
\[
(G\circ F)^*=F^*\circ f^{-1}(G^*).
\]
The identity morphism induces the identity on cohomology sheaves.
\end{proposition}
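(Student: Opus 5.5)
The statement follows almost formally from Theorem~\ref{thm:pullback-representation}, Lemma~\ref{lem:pullback-curvature}, and the functoriality identity~\eqref{eq:sharpfunct}. The plan has three steps: chain map, passage to cohomology sheaves, functoriality.

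\emph{Chain map.} By Lemma~\ref{lem:pullback-curvature}, $\nabla^F$ is flat, so $(f^*\wt E,\nabla^F)$ is a representation of $A$ and $\Omega^\bullet_A(-,f^*\wt E)$ is a complex of sheaves. Since $f^{-1}$ is exact and functorial, $f^{-1}\Omega^\bullet_{\wt A}(-,\wt E)$ with differential $f^{-1}d_{\wt A,\wt\nabla}$ is also a complex of sheaves. Lemma~\ref{lem:pullback-E-valued-forms} shows that $F^\sharp_{\wt E}$ is a morphism of graded sheaves. Theorem~\ref{thm:pullback-representation} shows that it intertwines the differentials. Hence $F^\sharp_{\wt E}$ is a morphism of complexes of sheaves.

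\emph{Cohomology sheaves.} A morphism of complexes of sheaves induces morphisms on cohomology sheaves. Inverse image is exact, so there is a canonical isomorphism $f^{-1}\shH^k_{\wt A}(-,\wt E)\cong \shH^k(f^{-1}\Omega^\bullet_{\wt A}(-,\wt E))$. Composing this isomorphism with $\shH^k(F^\sharp_{\wt E})$ defines $F^*$. Concretely, one can check this on stalks, or on the presheaf level before sheafifying: a class represented by a closed form $[\wt U,\alpha]$ is sent to the class of $F^*\alpha$. This is independent of the representative, because $F^\sharp_{\wt E}$ maps exact forms to exact forms by the chain property.

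\emph{Functoriality.} First I would check that the canonical isomorphism $f^*g^*E\cong (g\circ f)^*E$, given by $(x,(f(x),e))\mapsto (x,e)$, carries $(\nabla^{F})^{G}$ to $\nabla^{G\circ F}$. I would do this with the local formula~\eqref{eq:pullback-connection-local}. The local representative of $G\circ F$ in composed trivializations is $\wh G_{\wt\phi\phi'}(f(x),\wh F_{\phi\wt\phi}(x,u))$, so both connection terms equal $B_{\phi'\psi}\bigl(g(f(x)),\wh{(G\circ F)}(x,u),e\bigr)$. The uniqueness clause of Lemma~\ref{lem:pullback-connection} then identifies the two connections.

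Next I would prove the form-level identity $(G\circ F)^\sharp_E=F^\sharp_{g^*E}\circ f^{-1}(G^\sharp_E)$. The proof is the same pointwise computation as~\eqref{eq:sharpfunct}: both sides send $[\wt U,\alpha]$ to $x\mapsto \alpha_{g f(x)}(GF(a_1),\ldots,GF(a_k))$, under the identification of fibers. Functoriality of $\shH^k$ and the compatibility of the isomorphisms $f^{-1}\shH\cong\shH f^{-1}$ with composition of inverse images then give $(G\circ F)^*=F^*\circ f^{-1}(G^*)$.

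For the identity morphism, $\nabla^{\id}=\nabla$ by uniqueness and $\id^\sharp_E=\id$, so the induced map on cohomology is the identity.

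The main obstacle is bookkeeping rather than analysis. The canonical identifications $f^{-1}g^{-1}\cong(g f)^{-1}$, $f^*g^*E\cong(gf)^*E$, and $f^{-1}\shH\cong\shH f^{-1}$ must be mutually compatible, and the connections must match under the second one. I would handle this by working on inverse-image presheaves with representatives $[\wt U,\alpha]$, where every map is explicit, and then invoking the universal property of sheafification, as is done elsewhere in the paper.
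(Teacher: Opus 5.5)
Your proposal is correct and follows essentially the same route as the paper. The cochain property comes from Theorem~\ref{thm:pullback-representation}, and exactness of $f^{-1}$ identifies $\shH^k(f^{-1}\Omega^\bullet_{\wt A}(-,\wt E))$ with $f^{-1}\shH^k_{\wt A}(-,\wt E)$. Functoriality then reduces to the pointwise identity $(G\circ F)^\sharp_E=F^\sharp_{g^*E}\circ f^{-1}(G^\sharp_E)$, together with the fact that both connections have local term $B\bigl(g(f(x)),\wh G(f(x),\wh F(x,u)),e\bigr)$; note only that the iterated connection on $f^*g^*E$ should be written $(\nabla^G)^F$, not $(\nabla^F)^G$.
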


\begin{proof}
By Theorem~\ref{thm:pullback-representation}, $(f^*\wt E,\nabla^F)$ is a representation of $A$, and
$F^\sharp_{\wt E}$ is a cochain map:
\[
F^\sharp_{\wt E}\circ f^{-1}d_{\wt A,\wt\nabla}
=
d_{A,\nabla^F}\circ F^\sharp_{\wt E}.
\]
Thus it gives a morphism of complexes of sheaves.
Since the inverse image functor for sheaves of $\R$-modules is exact, taking cohomology gives
$\shH^k(f^{-1}\Omega^\bullet_{\wt A}(-,\wt E))\cong f^{-1}\shH^k_{\wt A}(-,\wt E)$.
Hence the cochain map induces the stated morphism
$F^*\colon f^{-1}\shH^k_{\wt A}(-,\wt E)\to \shH^k_A(-,f^*\wt E)$.

It remains to check functoriality. Let $F\colon A_1\to A_2$ lie over $f\colon M_1\to M_2$, and let $G\colon A_2\to A_3$ lie over $g\colon M_2\to M_3$.
For a local form $\alpha$ with values in $E$, one has
$(G\circ F)^\sharp_E\alpha=F^\sharp_{g^*E}(G^\sharp_E\alpha)$ under the canonical identification $f^*g^*E\cong (g\circ f)^*E$.
Indeed, at $x\in M_1$ and $a_1,\ldots,a_k\in (A_1)_x$, both sides are represented by
$\alpha_{g(f(x))}(G(F(a_1)),\ldots,G(F(a_k)))$.
The pulled-back connections also agree under the same identification.
In local trivializations, if $B$ is the connection term of $\nabla$ and if $\widehat F$ and $\widehat G$ are the local representatives of $F$ and $G$, then the iterated pullback has local term
$B(g(f(x)),\widehat G(f(x),\widehat F(x,u)),e)$,
which is exactly the local term for the pullback by $G\circ F$.
Therefore the two cochain maps agree, and the induced morphisms on cohomology sheaves satisfy
$(G\circ F)^*=F^*\circ f^{-1}(G^*)$.
The identity case is immediate.
\end{proof}

\begin{example}
\label{ex:action_functoriality}
Let $A=M\times\fg$ and $\wt A=\wt M\times\wt\fg$ be action Lie algebroids with anchors $\anch$ and $\wt\anch$, respectively.
Let $F\colon A\to\wt A$ be given by $F(x,u)=(f(x),\phi(u))$, where $f\colon M\to\wt M$ is smooth and $\phi\colon\fg\to\wt\fg$ is a smooth Lie algebra homomorphism.
Assume that $F$ is a Lie algebroid morphism.
Equivalently, under the preceding assumption on $\phi$, one has
$df\bigl(x;\anch(x,u)\bigr)=\wt\anch(f(x),\phi(u))$ for all $x\in M$ and $u\in\fg$.

Let $\wt E=\wt M\times\sE$, and let $\wt\nabla$ be the representation of $\wt A$ induced by a smooth representation $\pi\colon\wt\fg\times\sE\to\sE$.
Thus
\[
(\wt\nabla_\eta\sigma)(y)
=
d\sigma\bigl(y;\wt\anch(y,\eta(y))\bigr)+\pi(\eta(y))\sigma(y).
\]
Under the canonical identification $f^*\wt E\cong M\times\sE$, the pulled-back representation $(f^*\wt E,\nabla^F)$ is given by
\[
(\nabla^F_\xi\sigma)(x)
=
d\sigma\bigl(x;\anch(x,\xi(x))\bigr)+\pi\bigl(\phi(\xi(x))\bigr)\sigma(x).
\]
Equivalently, the local representation term is $\pi^F(u)e=\pi(\phi(u))e$.
Hence Proposition~\ref{prop:cohomology_functoriality} gives a morphism of cohomology sheaves
\[
F^*\colon f^{-1}\shH^k_{\wt A}(-,\wt E)\to \shH^k_A(-,f^*\wt E).
\]

In the global trivializations, a form $\alpha\in\Omega^k_{\wt A}(\wt M,\wt E)$ is a smooth map
$\alpha\colon\wt M\times\wt\fg^k\to\sE$, alternating and continuous $k$-linear in the $\wt\fg$-variables.
Its pullback is
\[
(F^\sharp_{\wt E}\alpha)(x;u_1,\ldots,u_k)
=
\alpha\bigl(f(x);\phi(u_1),\ldots,\phi(u_k)\bigr).
\]
The cochain identity follows directly from the chain rule, the anchor compatibility, and the identity
$\phi([u_i,u_j]_\fg)=[\phi(u_i),\phi(u_j)]_{\wt\fg}$.
Indeed, the first term in the differential becomes
\[
d\bigl(\alpha(\,\cdot\,;\phi(u_0),\ldots,\widehat{\phi(u_i)},\ldots,\phi(u_k))\bigr)
\bigl(f(x);df(x;\anch(x,u_i))\bigr),
\]
which is equal, by anchor compatibility, to the corresponding term with
$\wt\anch(f(x),\phi(u_i))$.
The representation term agrees because $\pi^F(u_i)=\pi(\phi(u_i))$, and the bracket term agrees because $\phi$ is a Lie algebra homomorphism.
For a composition of action Lie algebroid morphisms, the formula above gives the identity
$((g,\psi)\circ(f,\phi))^\sharp=(f,\phi)^\sharp\circ(g,\psi)^\sharp$ under the canonical identification of the pulled-back coefficient bundles.
\end{example}

\section{Lie II theorem for Banach-Lie groupoids}\label{sec:Lie-2nd-Banach}
Throughout this section, smooth maps defined on $[0,1]$ or $[0,1]^2$ mean restrictions of smooth maps defined on open neighborhoods in $\R$ or $\R^2$. Equivalently, one may work with compact manifolds with corners.
Let $\G\tto M$ be a Banach-Lie groupoid. For $g\in \G$, right multiplication
$R_g\colon \G_{\tgt(g)}\to \G_{\src(g)}$, $h\mapsto hg$, is a smooth
diffeomorphism. We define the right Maurer-Cartan form by
$\omega^R(v_g):=T_gR_{g^{-1}}(v_g)$ for $v_g\in \ker(T\src)_g$. Thus
$\omega^R(v_g)\in \LG_{\tgt(g)}$.
Fix $x\in M$, and write $\tgt_x:=\tgt|_{\G_x}\colon \G_x\to M$. We denote by
$\omega_x^R\colon T(\G_x)\to \LG$ the bundle map over $\tgt_x$ obtained by
restricting $\omega^R$ to $T(\G_x)$. If $g\colon I\to \G_x$ is smooth, its
right logarithmic derivative is
$\rld g(t):=\omega^R_x(\dot g(t))\in \LG_{\tgt(g(t))}$.
Given a smooth map $\Gamma\colon[0,1]^2\to \G_x$ with coordinates $(s,t)$, we set
$\drs\Gamma:=\omega^R_x(\pds\Gamma)$ and
$\drt\Gamma:=\omega^R_x(\pdt\Gamma)$.

\begin{lemma}\label{lem:right-MC-source-fiber}
For each $x\in M$, the map $\omega_x^R\colon T(\G_x)\to \LG$ is a first-order Lie
algebroid morphism over $\tgt_x$. Moreover, the induced map
$T(\G_x)\to \tgt_x^*\LG$ is a smooth vector bundle isomorphism over $\G_x$.
\end{lemma}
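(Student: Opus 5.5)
I would first prove the second assertion, since it gives a convenient description of $\omega^R_x$. The first step is to check that $\omega_x^R$ is a smooth vector bundle morphism over $\tgt_x$. For $g\in\G_x$ the map $T_gR_{g^{-1}}$ sends $T_g(\G_x)=\ker(T_g\src)$ isomorphically onto $\ker(T_{\one(\tgt(g))}\src)=\LG_{\tgt(g)}$, with inverse $T_{\one(\tgt(g))}R_g$. Smoothness is local, so I would work in the source-adapted chart $\chi$ used in the proof of Theorem~\ref{thm:groupoid-algebroid-first-order}. Near a point of $\G_x$, I would write $\omega^R$ as the derivative in the fiber variable of $(h,g)\mapsto \mult(h,\inv(g))$ along the smooth submanifold $\{(h,g):\src(h)=\src(g)\}$, evaluated at $h=g$. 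This shows that $(g,v)\mapsto\omega^R(v)$ is smooth and fiberwise linear. The induced map $T(\G_x)\to\tgt_x^*\LG$, $v_g\mapsto(g,\omega^R(v_g))$, is then a smooth bundle morphism over $\id_{\G_x}$. Its inverse is $(g,a)\mapsto T_{\one(\tgt(g))}R_g(a)=\ora{\xi}(g)$ for any local section $\xi$ with $\xi(\tgt(g))=a$, which is smooth by the same chart description, namely the map $R_\phi$ in that proof. Hence it is a vector bundle isomorphism.

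For the morphism property, the key difficulty is that $\tgt_x$ is not a local diffeomorphism, so the converse part of Theorem~\ref{thm:braanchcomp} does not apply. Instead I would reuse the argument of the functoriality theorem for Lie groupoids via the sheaf $\wgrisv$. Given $[\wt U,\alpha]\in\tgt_x^{-1}\Omega^k_{\LG}(U)$, where $U\subseteq\G_x$ is open, the pullback $(\omega^R_x)^*\alpha$ is exactly the restriction of $\Psi(\alpha)\in\wgrisvk(\wt U)$ to $U\subseteq\tgt^{-1}(\wt U)\cap\G_x$. This follows directly from the definition of $\Psi$ in Lemma~\ref{lemma:Omega-risv-isomorphism}. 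By that lemma, $\Psi(d_{\LG}\alpha)=\ddRs\Psi(\alpha)$. Restriction of $\src$-vertical forms to a single source fiber $\G_x$ is pullback along the inclusion $\G_x\hookrightarrow\G$, whose tangent map is the identity onto $\ker(T\src)|_{\G_x}$. Since $\ddRs$ is computed from brackets of $\src$-vertical vector fields, and these restrict to brackets of vector fields on $\G_x$, restriction intertwines $\ddRs$ with the de Rham differential $d_{T\G_x}$ of $\G_x$.

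The step I expect to be the main obstacle is this last claim: that restriction to $\G_x$ commutes with the differentials. Formula~\eqref{eq:dA} uses arbitrary local sections of $T\G_x$, and these need not extend to $\src$-vertical fields on an open set of $\G$. To handle it, I would use the local formula~\eqref{eq:dAnablalocal}, which only involves constant sections in a trivialization. In a source-adapted chart $(y,v)$, the trivialization of $\ker(T\src)$ restricts to one of $T\G_x$ at $y=x$. Both the anchor terms and the structure map $C$ are zero for the vertical tangent bundle, because constant sections in the fiber variable commute. Hence both local formulas read $\sum(-1)^i d_2\omega(\cdot;u_i)$ at $y=x$, and they agree.

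Chaining these identities gives
\[
(\omega^R_x)^\sharp(\tgt_x^{-1}d_{\LG}\alpha)=(\Psi d_{\LG}\alpha)|_{\G_x}=(\ddRs\Psi\alpha)|_{\G_x}=d_{T\G_x}\bigl((\Psi\alpha)|_{\G_x}\bigr)=d_{T\G_x}(\omega_x^R)^\sharp\alpha
\]
on the presheaf level. Sheafification then yields \eqref{eq:dgLie}, so $\omega^R_x$ is a first-order Lie algebroid morphism. As consistency checks, Theorem~\ref{thm:braanchcomp} recovers $T\tgt_x=\anch\circ\omega^R_x$ and the bracket relation $[\ora\xi,\ora\eta]=\ora{[\xi,\eta]}$.
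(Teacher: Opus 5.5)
Your proposal is correct and is essentially the paper's proof. Like the paper, you identify $(\omega_x^R)^*\alpha$ with $\Psi_U(\alpha)|_{\G_x\cap\tgt^{-1}(U)}$, use Lemma~\ref{lemma:Omega-risv-isomorphism} to get $\Psi(d_{\LG}\alpha)=\ddRs\Psi(\alpha)$, use that $\ddRs$ is the de Rham differential along $\G_x$, and obtain the bundle isomorphism from smoothness of $\mult$ and $\inv$. Your submersion-chart check is a useful justification of the restriction step, which the paper only asserts: constant vertical sections commute, so $C=0$ and both differentials reduce to $\sum_i(-1)^i d_2\omega(\cdot;u_i)$ at $y=x$. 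One small correction: for the inverse bundle map, smoothness at points away from the units should be read off from $T\mult$, since the unit-chart map $R_\phi$ only covers a neighborhood of the units.
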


\begin{proof}
Obviously for  $g\in \G_x$, 
$(\omega_x^R)_g=T_gR_{g^{-1}}\colon T_g(\G_x)\to \LG_{\tgt(g)}$ is a
Banach-space isomorphism. Since $\mult$ and $\inv$ are
smooth, the induced map $T(\G_x)\to \tgt_x^*\LG$ is a smooth vector bundle
isomorphism.
It remains to show that $\omega_x^R$ is a Lie algebroid morphism. Let
$U\subseteq M$ be open and let $\alpha\in \Omega^k_{\LG}(U)$. By
Lemma~\ref{lemma:Omega-risv-isomorphism}, $\Psi_U(\alpha)\in \wgrisvk(U)$.
For $h\in \G_x\cap \tgt^{-1}(U)$ and
$u_1,\ldots,u_k\in T_h(\G_x)$, we have
$$
\Psi_U(\alpha)_h(u_1,\ldots,u_k)
= \alpha_{\tgt(h)}\bigl(\omega_x^R(u_1),\ldots,\omega_x^R(u_k)\bigr)
= ((\omega_x^R)^*\alpha)_h(u_1,\ldots,u_k).
$$
Thus $\Psi_U(\alpha)|_{\G_x\cap\tgt^{-1}(U)}=(\omega_x^R)^*\alpha$.
Since $\Psi$ is an isomorphism of sheaves of dg algebras,
$\Psi_U(d_{\LG}\alpha)=\ddRs(\Psi_U(\alpha))$. On the source fiber
$\G_x$, the differential $\ddRs$ is the ordinary de Rham differential.
Therefore
$$
(\omega_x^R)^*(d_{\LG}\alpha)
= \Psi_U(d_{\LG}\alpha)|_{\G_x\cap\tgt^{-1}(U)}
= \ddR((\omega_x^R)^*\alpha).
$$
Hence $\omega_x^R$ is a first order Lie algebroid morphism over $\tgt_x$.
\end{proof}

\begin{lemma}\label{lem:two-parameter-criterion}
Let $A\to M$ be a first-order Banach Lie algebroid. Let
$\Psi\colon T([0,1]^2)\to A$ be a first-order Lie algebroid morphism over a smooth map
$\gamma\colon[0,1]^2\to M$. Set $a:=\Psi(\pdt)$ and $b:=\Psi(\pds)$. Then
$\anch(a)=\pdt\gamma$ and $\anch(b)=\pds\gamma$.
Suppose that $\psi\colon A|_U\to U\times\sF$ is a local trivialization such that
$\gamma([0,1]^2)\subseteq U$. Write
$\psi(a)=(\gamma,a_\psi)$ and $\psi(b)=(\gamma,b_\psi)$.
Then
$$
\pds a_\psi-\pdt b_\psi=C_\psi(\gamma,a_\psi,b_\psi).
$$
\end{lemma}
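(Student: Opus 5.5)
The plan is to read both conclusions directly off the earlier results on morphisms, applied to the tangent algebroid $T([0,1]^2)$ and its global frame $\pdt,\pds$.

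\textbf{Anchor identities.} I apply Theorem~\ref{thm:braanchcomp}\,\ref{item:anchorcomp} to $\Psi$. The anchor of $T([0,1]^2)$ is the identity, so
\[
\anch(\Psi(v))=T\gamma(v).
\]
Evaluating at $v=\pdt$ and $v=\pds$ gives $\anch(a)=\pdt\gamma$ and $\anch(b)=\pds\gamma$. This holds on the open neighbourhood where everything is defined, and hence by restriction on $[0,1]^2$.

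\textbf{Structure equation.} I apply Lemma~\ref{lem:local-form-morphism} to the global trivialization
\[
\phi\colon T([0,1]^2)\to[0,1]^2\times\R^2,
\]
whose constant sections are the coordinate fields. For the source algebroid we have $C_\phi=0$, because coordinate fields commute: $C_\phi(x,u,v)=[\xi_u,\xi_v]^\phi(x)=0$ by Lemma~\ref{lemma:characterization}. The anchor in this trivialization is $\anch^\phi(x,u)=u$. The local representative of $\Psi$ is
\[
\wh\Psi(x,u)=u^t a_\psi(x)+u^s b_\psi(x),
\]
which is linear in $u=(u^t,u^s)$. Its derivative in the base direction is
\[
d_1\wh\Psi(x,v;w)=v^t\,d a_\psi(x;w)+v^s\,d b_\psi(x;w).
\]
I then take $u=e_t$ and $v=e_s$ in \eqref{eq:local-form-morphism}. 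The left side is $\wh\Psi(x,0)=0$. The right side is
\[
d_1\wh\Psi(x,e_s;e_t)-d_1\wh\Psi(x,e_t;e_s)+C_\psi\bigl(\gamma(x),a_\psi(x),b_\psi(x)\bigr)
=\pdt b_\psi-\pds a_\psi+C_\psi(\gamma,a_\psi,b_\psi).
\]
Setting this equal to $0$ gives
\[
\pds a_\psi-\pdt b_\psi=C_\psi(\gamma,a_\psi,b_\psi),
\]
which is the stated identity. To be safe about the sign, I will also recheck the Lie algebroid differential directly: apply \eqref{eq:dA} to $\alpha_\lambda$ on the sections $\pdt,\pds$, as in the proof of Lemma~\ref{lem:local-form-morphism}, and use Hahn--Banach.

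\textbf{Main obstacle.} The delicate point is the boundary of $[0,1]^2$. By the convention of this section, $\gamma$ and $\Psi$ are restrictions of smooth maps defined on an open neighbourhood $W$ of the square. However, the morphism property is only given for the restricted data, so I must make sure the lemmas are applied on a domain where it actually holds.

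\begin{itemize}
\item First I will try to shrink to an open set $W$ with $\gamma(W)\subseteq U$ on which the extension is itself a morphism, and apply the lemmas there.
\item If that is not available, I will work with the interior $(0,1)^2$, which is an open manifold, derive the identity there, and conclude on all of $[0,1]^2$ by continuity, since both sides are continuous.
\end{itemize}

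Finally, the Banach hypothesis is not really needed in this argument. Only the first-order structure map $C_\psi$ enters.
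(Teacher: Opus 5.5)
Your proposal is correct and is essentially the paper's argument. The paper pairs with a constant form $\lambda\in\sF'$ in the trivialization $\psi$, compares $d(\Psi^*\lambda)(\pds,\pdt)$ with $\Psi^*(d_A\lambda)(\pds,\pdt)$, and applies Hahn--Banach; this is exactly the proof of Lemma~\ref{lem:local-form-morphism} specialized to the coordinate frame $\pdt,\pds$ (where $C_\phi=0$ and $\anch^\phi(x,u)=u$), so citing that lemma as you do yields the same identity with the correct sign, and your interior-plus-continuity treatment of the boundary is a harmless precaution the paper leaves implicit.
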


\begin{proof}
The anchor identities follow from anchor compatibility for $\Psi$.
Let $\lambda\in\sF'$ and treat $\lambda$ as a constant local $A$-form on $U$ in
the trivialization $\psi$. Since $\Psi$ is a Lie algebroid morphism, we have
$d(\Psi^*\lambda)=\Psi^*(d_A\lambda)$. Moreover
$(\Psi^*\lambda)(\pdt)=\lambda(a_\psi)$ and
$(\Psi^*\lambda)(\pds)=\lambda(b_\psi)$. Hence
$
d(\Psi^*\lambda)(\pds,\pdt)
= \lambda(\pds a_\psi-\pdt b_\psi).$
We compute $d_A\lambda$ using $\psi$. Let $y\in U$ and let
$u,v\in\sF$. Let $\xi_u$ and $\xi_v$ be the constant local sections with
values $u$ and $v$. Since $\lambda$ is constant in the trivialization,
$\anch(\xi_u).\lambda(\xi_v)=0$ and $\anch(\xi_v).\lambda(\xi_u)=0$. Therefore
$$
(d_A\lambda)_y(\xi_u(y),\xi_v(y))
= -\lambda([\xi_u,\xi_v]_A(y))
= -\lambda(C_\psi(y,u,v)).
$$
We then have
$
\Psi^*(d_A\lambda)(\pds,\pdt)
= \lambda(C_\psi(\gamma,a_\psi,b_\psi)).$
Thus
$$
\lambda(\pds a_\psi-\pdt b_\psi-C_\psi(\gamma,a_\psi,b_\psi))=0
$$
for all $\lambda\in\sF'$. By the Hahn-Banach theorem, the result follows.
\end{proof}

\begin{lemma}\label{lem:MC-source-fiber}
Let $\G\tto M$ be a Banach-Lie groupoid, let $x\in M$, and let
$\Gamma\colon[0,1]^2\to \G_x$ be a smooth map. Set $m:=\tgt\circ\Gamma$,
$a:=\drt\Gamma$, and $b:=\drs\Gamma$. Then
$\anch(a)=\pdt m$ and $\anch(b)=\pds m$.
Suppose that $\psi\colon \LG|_U\to U\times\sF$ is a local trivialization such that
$m([0,1]^2)\subseteq U$. Write $\psi(a)=(m,a_\psi)$ and
$\psi(b)=(m,b_\psi)$.  Then
$$
\pds a_\psi-\pdt b_\psi=C_\psi(m,a_\psi,b_\psi).
$$
If moreover $\Gamma(s,0)=\one_x$ and $\Gamma(s,1)$ is independent of $s$, then
$b(s,0)=b(s,1)=0$.
\end{lemma}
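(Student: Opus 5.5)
The plan is to reduce the statement to Lemma~\ref{lem:two-parameter-criterion} by composing two first-order Lie algebroid morphisms. The map $\Gamma\colon[0,1]^2\to\G_x$ is smooth, so $T\Gamma\colon T([0,1]^2)\to T(\G_x)$ is a first-order Lie algebroid morphism over $\Gamma$. In more detail, pullback of de Rham forms commutes with $\ddR$, and Theorem~\ref{thm:braanchcomp} together with the Bastiani chain rule identifies this with the dg-condition of Definition~\ref{def:liealgebroidmor} for tangent algebroids. Indeed, $F^*$ on $\Omega_{TN}$ is the usual pullback of forms, and naturality of $d$ under smooth maps is local. Lemma~\ref{lem:right-MC-source-fiber} then says that $\omega^R_x\colon T(\G_x)\to\LG$ is a first-order morphism over $\tgt_x$. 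By the composition lemma (functoriality \eqref{eq:sharpfunct}), $\Psi:=\omega^R_x\circ T\Gamma$ is a first-order Lie algebroid morphism $T([0,1]^2)\to\LG$ over $m=\tgt_x\circ\Gamma$.

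By construction $\Psi(\pdt)=\omega^R_x(\pdt\Gamma)=\drt\Gamma=a$ and $\Psi(\pds)=b$. Lemma~\ref{lem:two-parameter-criterion}, applied with $A=\LG$, $\gamma=m$ and the trivialization $\psi$, gives the anchor identities $\anch(a)=\pdt m$ and $\anch(b)=\pds m$, as well as the structure equation $\pds a_\psi-\pdt b_\psi=C_\psi(m,a_\psi,b_\psi)$. The anchor identities could also be checked directly: $T\tgt\circ T R_{g^{-1}}=T\tgt$ on $\src$-vertical vectors, because $\tgt\circ R_{g^{-1}}=\tgt$.

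For the boundary claims, suppose $\Gamma(s,0)=\one_x$ for all $s$. Then $\pds\Gamma(s,0)=0$, so $b(s,0)=T R_{\Gamma^{-1}}(0)=0$. Likewise, if $\Gamma(s,1)$ is independent of $s$, then $\pds\Gamma(s,1)=0$, and so $b(s,1)=0$.

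The only step I expect to need care is justifying that $T\Gamma$ qualifies as a first-order morphism when the domain is $[0,1]^2$, which is a manifold with corners. I would handle this using the section's convention that $\Gamma$ is the restriction of a smooth map on an open neighbourhood $W\supseteq[0,1]^2$. One applies the argument there, shrinking $W$ so that $m(W)\subseteq U$, which is possible because $U$ is open and $[0,1]^2$ is compact. One then restricts the resulting identities to $[0,1]^2$.
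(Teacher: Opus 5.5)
Your proposal is correct and follows the paper's proof: compose $T\Gamma$ with $\omega^R_x$, apply Lemma~\ref{lem:two-parameter-criterion} to the composite over $m=\tgt\circ\Gamma$, and read off $b(s,0)=b(s,1)=0$ directly from $\pds\Gamma=0$ at those edges. One small caution: the converse part of Theorem~\ref{thm:braanchcomp} requires the base map to be a local diffeomorphism, so it cannot be used to show that $T\Gamma$ is a morphism. The naturality of $\ddR$ under pullback, which you also invoke and which the paper simply asserts, is enough on its own.
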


\begin{proof}
The map $T\Gamma\colon T([0,1]^2)\to T(\G_x)$ is a Lie algebroid morphism.
By Lemma~\ref{lem:right-MC-source-fiber}, $\omega_x^R\colon T(\G_x)\to\LG$ is
a Lie algebroid morphism. Hence $\omega_x^R\circ T\Gamma$ is a Lie algebroid
morphism over $m=\tgt\circ\Gamma$. Applying
Lemma~\ref{lem:two-parameter-criterion} gives the anchor identities and the
local formula.
If $\Gamma(s,0)=\one_x$, then $\pds\Gamma(s,0)=0$, so $b(s,0)=0$. If
$\Gamma(s,1)$ is independent of $s$, then $\pds\Gamma(s,1)=0$, so
$b(s,1)=0$.
\end{proof}

\begin{remark}\label{remark:source-adapted-charts-exp-map}
Let $\G\tto M$ be a Banach-Lie groupoid. Since $\src$ is a split submersion
and $\one\colon M\to\G$ is a section of $\src$, for every $x\in M$ there are
open neighborhoods $U\subseteq M$ of $x$, $V\subseteq\sF$ of $0$, and
$W\subseteq\G$ of $\one_x$, and a diffeomorphism
$\chi\colon U\times V\to W$ such that $\src(\chi(y,v))=y$ and
$\chi(y,0)=\one_y$. The induced local trivialization
$\psi\colon \LG|_U\to U\times\sF$ is given by
$\psi(T_{(y,0)}\chi(0,u))=(y,u)$.
\end{remark}

\begin{lemma}\label{lem:param-reconstruction}
Let $\G\tto M$ be a Banach-Lie groupoid. Let $P$ be a Banach manifold, and let
$\gamma\colon P\times[0,1]\to M$ and $\eta\colon P\times[0,1]\to \LG$ be
smooth maps such that $\eta(p,t)\in \LG_{\gamma(p,t)}$ and
$\anch(\eta(p,t))=\partial_t\gamma(p,t)$. Then for every $p_0\in P$, there
exist an open neighborhood $P_0$ of $p_0$ and a unique smooth map
$g\colon P_0\times[0,1]\to \G$ such that $g(p,0)=\one_{\gamma(p,0)}$,
$\drt g(p,t)=\eta(p,t)$, $\src(g(p,t))=\gamma(p,0)$, and
$\tgt(g(p,t))=\gamma(p,t)$.
\end{lemma}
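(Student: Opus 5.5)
We read $\drt g=\eta$ as the ODE $\partial_t g(p,t)=T_{\one(\tgt(g))}R_{g(p,t)}\bigl(\eta(p,t)\bigr)$ on the source fiber $\G_{\gamma(p,0)}$, with initial value $\one_{\gamma(p,0)}$. Equivalently, $g(p,\cdot)$ is an integral curve of the time-dependent vector field on $\G$ given by $(t,h)\mapsto \ora{\eta_t}(h)$. The difficulty is that $\eta(p,t)$ is only defined along $\gamma(p,t)$ and not as a section over an open set. So the plan is to replace the unknown $g$ by the pair (target point, group element) and solve an ODE that keeps track of the target. Since $\tgt(g)$ must equal $\gamma(p,t)$, we only ever evaluate $\eta$ at points where it is defined. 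The construction therefore reduces to a genuine Banach-space ODE with smooth parameter dependence in $p$.

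\textbf{Local step.} Fix $(p_0,t_0)$ and a point $h_0\in\G$ with $\src(h_0)=\gamma(p_0,0)$ and $\tgt(h_0)=\gamma(p_0,t_0)$. Use a source-adapted chart $\chi\colon U\times V\to W$ as in Remark~\ref{remark:source-adapted-charts-exp-map} around $\one_{\gamma(p_0,t_0)}$, and right-translate it by $h_0$. Concretely, we look for $g(p,t)=k(p,t)\,h(p)$, where $h(p)$ is a smooth choice of element over $p$ near $p_0$ (initially $h(p)=g(p,t_0)$ along the previously constructed piece) and $k(p,t)\in\G_{\tgt(h(p))}$ is near the unit. The chain rule for $\mult$ turns the condition $\drt g=\eta$ into $\drt k=\eta$. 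In the chart, write $k=\chi(\tau,v)$ with $\tgt(k)=\tau\circ(\cdot)$. The requirement is then an ODE $\partial_t v=\Phi(\tau\text{-data},v,\eta_\psi(p,t))$ for the fiber coordinate $v$, where $\eta_\psi$ denotes the local representative of $\eta$ in the trivialization $\psi$. The coefficient $\Phi$ is smooth because $R_\phi$ from the proof of Theorem~\ref{thm:groupoid-algebroid-first-order} is smooth. We evaluate $\eta_\psi$ at $\gamma(p,t)$, which is legitimate because the anchor condition together with $\tgt\circ\drt$ forces $\tgt(k(p,t))=\gamma(p,t)$. To make this rigorous, we solve the system for $(y,v)$ with $\partial_t y=\partial_t\gamma$ and then check that $y=\gamma$ by uniqueness. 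Banach ODE theory (Picard--Lindel\"of with smooth dependence on parameters and initial data) then gives a smooth local solution on $P_1\times[t_0-\varepsilon,t_0+\varepsilon]$.

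\textbf{Globalization in $t$.} By compactness of $[0,1]$ we can subdivide into finitely many intervals. Starting at $t=0$ with $h=\one_{\gamma(p,0)}$, we apply the local step successively, shrinking $P_0$ finitely many times, and concatenate the pieces. The properties $\src(g)=\gamma(p,0)$ and $\tgt(g)=\gamma(p,t)$ are preserved: source is constant because the vector field is source-vertical, and target follows from $T\tgt\circ\ora\xi=\anch(\xi)\circ\tgt$ together with $\anch(\eta)=\partial_t\gamma$. The step we expect to be the main obstacle is obtaining a uniform $\varepsilon$ for all $t_0$. The solution curve $t\mapsto g(p_0,t)$ does not exist a priori, so a naive compactness argument is circular. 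We would instead argue by continuation. Let $T^*$ be the supremum of the times up to which a solution exists for $p_0$. Write $g=k\cdot h$ with $h$ fixed near the endpoint. The equation $\drt k=\eta$ near the unit then has existence time bounded below uniformly, because $\eta$ and $\gamma$ are continuous on the compact set $\{p_0\}\times[0,1]$. This forces $T^*=1$, after which shrinking in $p$ is harmless.

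\textbf{Uniqueness.} Suppose two solutions share the initial value. Both lie in the Hausdorff source fiber $\G_{\gamma(p,0)}$. The set where they agree is closed by Hausdorffness, and it is open by local ODE uniqueness in charts. Since $[0,1]$ is connected, the two solutions agree everywhere.
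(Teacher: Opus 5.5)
Your proposal follows essentially the same route as the paper. You set up an ODE for the fiber coordinate in source-adapted charts at units, driven by the local representative $\eta_\psi(p,t)$; you identify $\tgt(k)$ with $\gamma$ by uniqueness for the anchor equation; you concatenate finitely many pieces by right translation, with the pieces chosen via compactness of $\gamma(\{p_0\}\times[0,1])$; and you get uniqueness from the Hausdorff source fiber. The continuation argument is not really needed, since only charts at units along the a priori known curve $\gamma$ enter, which is how the paper picks its subdivision directly.

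Two small points to tighten. First, the target step should say that $\tgt\circ k$ and $\gamma$ both solve $\partial_t y=\anch^\psi(y,\eta_\psi(p,t))$ with the same initial value, so ODE uniqueness gives $\tgt\circ k=\gamma$; your auxiliary system with $\partial_t y=\partial_t\gamma$ is vacuous as written. Second, you should justify that the glued map is smooth across the breakpoints, for instance by using overlapping two-sided local solutions and uniqueness on the overlaps.
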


\begin{proof}
Fix $p_0\in P$. We work locally so we shrink $P$
 to a chart domain about $p_0$.
For each $\tau\in[0,1]$, choose a  chart
$\chi\colon U\times V\to W$ about $\one_{\gamma(p_0,\tau)}$ inducing a local
trivialization $\psi\colon \LG|_U\to U\times\sF$ as in
Remark~\ref{remark:source-adapted-charts-exp-map}. After shrinking, choose an
open neighborhood $S\subseteq U$ of $\gamma(p_0,\tau)$ and an open ball
$B(0,r)\subseteq V$ such that $\tgt(\chi(S\times B(0,r)))\subseteq U$. Define
$B\colon S\times B(0,r)\to\mathcal L(\sF)$ by
$$
\psi\left(\omega^R(T_{(x,v)}\chi(0,w))\right)
= (\tgt(\chi(x,v)),B(x,v)w).
$$
The map $(x,v,w)\mapsto B(x,v)w$ is smooth and linear in $w$. By
Lemma~\ref{lem:smooth-linear-expo}, $B$ is smooth as a map to
$\mathcal L(\sF)$. Also $B(x,0)=\id$. After shrinking $S$ and $r$, we may assume
$\|B(x,v)-\id\|<1/2$ on $S\times B(0,r)$. Hence $B(x,v)$ is invertible and
$\|B(x,v)^{-1}\|\le 2$.
By compactness of $\gamma(\{p_0\}\times[0,1])$, we can choose a subdivision
$0=t_0<t_1<\cdots<t_N=1$ and, for each $I_j=[t_{j-1},t_j]$, a chart
$\chi_j\colon U_j\times V_j\to W_j$, an open set $S_j\subseteq U_j$, a radius
$r_j>0$, a trivialization $\psi_j\colon \LG|_{U_j}\to U_j\times\sF_j$, and a map
$B_j$ as above, such that $\gamma(p_0,I_j)\subseteq S_j$ and
$\|B_j(x,v)^{-1}\|\le 2$ on $S_j\times B(0,r_j)$. By
the tube lemma, there exists an open neighborhood $P_0\subseteq P$ of $p_0$ such that
$\gamma(P_0\times I_j)\subseteq S_j$ for every $j$.
On $P_0\times I_j$, write $\psi_j(\eta(p,t))=(\gamma(p,t),\alpha_j(p,t))$.
After shrinking $P_0$, choose $A_j\ge0$ such that
$\|\alpha_j(p,t)\|\le A_j$ on $P_0\times I_j$. Refining the subdivision  and reindexing
if necessary, we may
assume $2A_j(t_j-t_{j-1})<r_j$ for every $j$.
Fix $j$ and put $x_j(p)=\gamma(p,t_{j-1})$. Consider the ODE
$$
\partial_t c_j(p,t)=B_j(x_j(p),c_j(p,t))^{-1}\alpha_j(p,t),
\qquad
c_j(p,t_{j-1})=0.
$$
The right-hand side is smooth in $(p,t,c_j)$ on
$P_0\times I_j\times B(0,r_j)$. The Banach ODE theorem with parameters gives a
unique local smooth depending smoothly on $p$. On its interval of
existence, $\|\partial_t c_j(p,t)\|\le 2A_j$, hence
$\|c_j(p,t)\|\le 2A_j(t-t_{j-1})<r_j$. Thus the solution exists on all of $I_j$.

Define $\widetilde g_j(p,t)=\chi_j(x_j(p),c_j(p,t))$. Then
$\src(\widetilde g_j(p,t))=\gamma(p,t_{j-1})$ and
$\widetilde g_j(p,t_{j-1})=\one_{\gamma(p,t_{j-1})}$. Put
$\widetilde\gamma_j=\tgt\circ\widetilde g_j$. By construction,
$\widetilde\gamma_j(P_0\times I_j)\subseteq U_j$ and
$$\psi_j(\drt\widetilde g_j(p,t))=(\widetilde\gamma_j(p,t),\alpha_j(p,t)).$$
Applying the anchor gives
$\partial_t\widetilde\gamma_j=\anch_j(\widetilde\gamma_j,\alpha_j)$, where
$\anch_j$ is the local representative of the anchor in $\psi_j$. Since
$\anch(\eta)=\partial_t\gamma$ and $\psi_j(\eta)=(\gamma,\alpha_j)$, the curve
$\gamma$ solves the same ODE with the same initial value at $t_{j-1}$. Uniqueness
gives $\widetilde\gamma_j=\gamma$ on $P_0\times I_j$. Hence
$\drt\widetilde g_j=\eta$ on $P_0\times I_j$.

We now concatenate the pieces. Put $h_0(p)=\one_{\gamma(p,0)}$. For
$j=1,\ldots,N$, define inductively $g_j(p,t)=\widetilde g_j(p,t)h_{j-1}(p)$ on
$I_j$, and set $h_j(p)=g_j(p,t_j)$. The product is defined because
$\src(\widetilde g_j(p,t))=\gamma(p,t_{j-1})=\tgt(h_{j-1}(p))$. 
Hence $\src(g_j(p,t))=\gamma(p,0)$,
$\tgt(g_j(p,t))=\gamma(p,t)$, and $\drt g_j=\eta$. Also
$g_j(p,t_j)=h_j(p)=g_{j+1}(p,t_j)$. Thus the $g_j$ define a continuous
piecewise smooth map $g\colon P_0\times[0,1]\to\G$.
It remains to check smoothness at the breakpoints. 
Fix $j$ and put
$h(p)=g(p,t_j)$. Near $(p_0,t_j)$, set $q(p,t)=g(p,t)h(p)^{-1}$. Then
$q(p,t_j)=\one_{\gamma(p,t_j)}$, $\src(q(p,t))=\gamma(p,t_j)$, and
$\drt q=\eta$, since $h(p)^{-1}$ is independent of $t$. In a source-adapted chart
around $\one_{\gamma(p_0,t_j)}$, the left and right pieces of $q$ solve the same
Banach-space ODE with initial value $0$ at $t_j$. By local uniqueness, they agree
near $t_j$. Hence $g$ is smooth across each breakpoint. Since the subdivision is
finite, $g$ is smooth on $P_0\times[0,1]$.

For uniqueness, let $\bar g$ be another solution. Fix $p\in P_0$. The curves
$g(p,\cdot)$ and $\bar g(p,\cdot)$ lie in the Hausdorff source fiber
$\G_{\gamma(p,0)}$. Let $E=\{t\in[0,1]:g(p,t)=\bar g(p,t)\}$. Then $E$ is
nonempty. It is closed because $\G_{\gamma(p,0)}$ is Hausdorff. If $t_*\in E$,
right multiply both curves by $g(p,t_*)^{-1}$ and use a source-adapted chart at
$\one_{\gamma(p,t_*)}$. Local ODE uniqueness shows that $E$ is open. Hence
$E=[0,1]$. Since $p$ was arbitrary, $g=\bar g$.
\end{proof}
The curve $\eta$ is called admissible over $\gamma$, and the curve $g$ is
called the reconstruction of $\eta$.

\begin{proposition}\label{prop:homotopy-invariance}
Let $\G\tto M$ and $\cH\tto N$ be Banach-Lie groupoids. Let
$(\Phi,f)\colon \LG\to\Lie(\cH)$ be a first-order Lie algebroid morphism. Let
$x\in M$, and let $\Gamma\colon[0,1]^2\to \G_x$ be smooth with
$\Gamma(s,0)=\one_x$ and $\Gamma(s,1)$ independent of $s$. For each $s$, let
$\Delta_s\colon[0,1]\to \cH_{f(x)}$ be the reconstruction starting at
$\one_{f(x)}$ of the $\Lie(\cH)$-path $\Phi(\drt\Gamma(s,t))$. Then
$\Delta_s(1)$ is independent of $s$.
\end{proposition}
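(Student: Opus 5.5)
The plan is to assemble the two-parameter family $\Delta(s,t):=\Delta_s(t)$ and to show that $\drs\Delta(s,1)=0$; then $s\mapsto\Delta_s(1)$ has vanishing derivative and is constant. Put $m:=\tgt\circ\Gamma$, $a:=\drt\Gamma$ and $b:=\drs\Gamma$.

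First, regularity. The data $\gamma(s,t):=f(m(s,t))$ and $\eta(s,t):=\Phi(a(s,t))$ satisfy $\anch(\eta)=\pdt\gamma$, by anchor compatibility (Theorem~\ref{thm:braanchcomp}) together with Lemma~\ref{lem:MC-source-fiber}. We apply Lemma~\ref{lem:param-reconstruction} with parameter space an open interval containing $[0,1]$; this is possible because $\Gamma$ extends smoothly by the standing convention. Local uniqueness lets the local solutions patch over a finite cover of $[0,1]$, and they also agree with each $\Delta_s$. Hence $\Delta\colon[0,1]^2\to\cH_{f(x)}$ is smooth, with $\Delta(s,0)=\one_{f(x)}$, $\tgt\circ\Delta=f\circ m$ and $\drt\Delta=\Phi(a)$.

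Next, set $\tilde b:=\drs\Delta$ and compare it with $\Phi(b)$. Both are sections along $f\circ m$ of $\Lie(\cH)$ that vanish at $t=0$: $\tilde b(s,0)=0$ because $\Delta(s,0)$ is constant, and $b(s,0)=0$ by Lemma~\ref{lem:MC-source-fiber}. The aim is to show that both satisfy the same linear ODE in $t$; uniqueness then gives $\tilde b=\Phi(b)$ everywhere, and in particular $\tilde b(s,1)=\Phi(b(s,1))=0$ by the last clause of Lemma~\ref{lem:MC-source-fiber}. We work locally in $t$, choosing trivializations $\phi$ of $\LG$ near $m(s,t)$ and $\psi$ of $\Lie(\cH)$ near $f(m(s,t))$, and we cover $[0,1]$ by finitely many such pieces, subdividing in $t$ as in Lemma~\ref{lem:param-reconstruction}. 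Lemma~\ref{lem:MC-source-fiber} applied to $\Delta$ in $\cH$ gives
\[
\pdt\tilde b_\psi=\pds\eta_\psi-C_\psi(f\circ m,\eta_\psi,\tilde b_\psi).
\]
This is a linear ODE in $\tilde b_\psi$ whose coefficients are smooth and bounded, using the operator-valued form of $C_\psi$ from Remark~\ref{remark:lcvb-bvb}.

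The main obstacle is showing that $\beta:=\Phi(b)$, with local representative $\beta_\psi=\wh\Phi_{\phi\psi}(m,b_\phi)$, satisfies the same equation. We differentiate $\eta_\psi=\wh\Phi_{\phi\psi}(m,a_\phi)$ in $s$ and $\beta_\psi$ in $t$ using the chain rule, linearity in the fiber variable, and $\pds m=\anch(b)$, $\pdt m=\anch(a)$. Subtracting gives
\[
\pds\eta_\psi-\pdt\beta_\psi=d_1\wh\Phi(m,a_\phi;\anch^\phi(m,b_\phi))-d_1\wh\Phi(m,b_\phi;\anch^\phi(m,a_\phi))+\wh\Phi\bigl(m,\pds a_\phi-\pdt b_\phi\bigr).
\]
We substitute $\pds a_\phi-\pdt b_\phi=C_\phi(m,a_\phi,b_\phi)$ from Lemma~\ref{lem:MC-source-fiber} and then apply \eqref{eq:local-form-morphism} (Lemma~\ref{lem:local-form-morphism}) with $u=a_\phi$ and $v=b_\phi$. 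The derivative terms of $\wh\Phi$ cancel, leaving $\pds\eta_\psi-\pdt\beta_\psi=C_\psi(f\circ m,\eta_\psi,\beta_\psi)$. Thus $\beta_\psi$ satisfies the same linear ODE as $\tilde b_\psi$.

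Finally, on each piece of $[0,1]$ both solutions agree at the left endpoint, so they agree on the whole piece. The change of trivialization between consecutive pieces is a fiberwise isomorphism applied to both sections, so equality of sections propagates across the subdivision. This gives $\tilde b=\Phi(b)$ on $[0,1]^2$, hence $\drs\Delta(s,1)=0$. Since $\omega^R$ is fiberwise injective (Lemma~\ref{lem:right-MC-source-fiber}), $\pds\Delta(s,1)=0$, and so $s\mapsto\Delta_s(1)$ is constant.
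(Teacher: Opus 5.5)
Your proof is correct and follows the paper's route. You assemble $\Delta$ via Lemma~\ref{lem:param-reconstruction}, prove $\drs\Delta=\Phi(\drs\Gamma)$ by uniqueness for a linear ODE in $t$ with zero initial value, and conclude $\pds\Delta(s,1)=0$. The one difference is that you derive $\pds\eta_\psi-\pdt\beta_\psi=C_\psi(f\circ m,\eta_\psi,\beta_\psi)$ by hand, from Lemma~\ref{lem:MC-source-fiber} in a trivialization of $\LG$ combined with Lemma~\ref{lem:local-form-morphism}; the paper gets it in one step by applying Lemma~\ref{lem:two-parameter-criterion} to the composite first-order morphism $\Phi\circ\omega^R_{\G,x}\circ T\Gamma$, which avoids choosing trivializations of $\LG$ compatible with $f$.
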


\begin{proof}
Put $m=\tgt_\G\circ\Gamma$, $a=\drt\Gamma$, and $b=\drs\Gamma$. By
Lemma~\ref{lem:MC-source-fiber}, $\anch_\G(a)=\partial_t m$,
$\anch_\G(b)=\partial_s m$, and $b(s,0)=b(s,1)=0$. By anchor compatibility,
$\anch_\cH(\Phi(a))=Tf(\anch_\G(a))=\partial_t(f\circ m)$. Hence $\Phi(a)$ is
admissible over $f\circ m$.
By Lemma~\ref{lem:param-reconstruction}, applied locally with parameter $s$,
the reconstructions form a smooth map $\Delta\colon[0,1]^2\to\cH$. On
overlaps the locally constructed maps agree by uniqueness. Hence
$\drt\Delta=\Phi(a)$, $\src_\cH(\Delta)=f(x)$, and
$\tgt_\cH(\Delta)=f\circ m$.
Set $d=\drs\Delta$. We prove $d=\Phi(b)$. Fix $s_0\in[0,1]$. The compact set
$f(m(s_0,[0,1]))$ is covered by finitely many local trivializations of
$\Lie(\cH)$. Choose a subdivision $0=t_0<\cdots<t_N=1$ such that each subcurve
over $I_j=[t_{j-1},t_j]$ lies in one trivialization domain $U_j$. After
shrinking to a neighborhood $S$ of $s_0$, we may assume
$f(m(S\times I_j))\subseteq U_j$ for all $j$.
On $S\times I_j$, write the local representatives of $\Phi(a)$, $\Phi(b)$,
and $d$ as $A_j$, $B_j$, and $D_j$. Let $C_j^\cH$ be the local structure map of
$\Lie(\cH)$ in the chosen local trivialization. The morphism
$\Phi\circ\omega_{\G,x}^R\circ T\Gamma$ gives, by
Lemma~\ref{lem:two-parameter-criterion},
$$
\partial_sA_j-\partial_tB_j=C_j^\cH(f\circ m,A_j,B_j).
$$
The morphism $\omega_{\cH,f(x)}^R\circ T\Delta$ gives
$$
\partial_sA_j-\partial_tD_j=C_j^\cH(f\circ m,A_j,D_j).
$$
Subtracting gives
$$
\partial_t(D_j-B_j)=-C_j^\cH(f\circ m,A_j,D_j-B_j).
$$
For fixed $s$, this is a linear Banach-space ODE in $D_j-B_j$.
At $t=0$, $d(s,0)=0$ because $\Delta(s,0)=\one_{f(x)}$ is independent of $s$,
and $\Phi(b(s,0))=0$. Hence $D_1(s,0)-B_1(s,0)=0$.  Uniqueness gives
$D_1=B_1$ on $S\times I_1$. At each next subdivision point this equality is an
equality of algebroid elements, so it remains true in the next trivialization.
Induction gives $d=\Phi(b)$ on $S\times[0,1]$. Since $s_0$ was arbitrary, this
holds globally.
At $t=1$, $b(s,1)=0$, so $d(s,1)=0$. Since
$d(s,1)=\drs(\Delta(s,1))$, and since the right Maurer-Cartan map is a
pointwise linear isomorphism on $\cH_{f(x)}$, we get
$\partial_s\Delta(s,1)=0$. Thus $s\mapsto\Delta_s(1)$ is locally constant.
Since $[0,1]$ is connected, it is constant.
\end{proof}

\begin{lemma}\label{lem:smooth-local-source-paths}
Let $\G\tto M$ be a Banach-Lie groupoid. Let $g_0\in \G$ lie in the connected
component of $\one_{\src(g_0)}$ inside the source fiber $\G_{\src(g_0)}$. Then
there are an open neighborhood $U_g\subseteq \G$ of $g_0$ and a smooth map
$P\colon U_g\times[0,1]\to \G$ such that, for every $h\in U_g$, one has
$\src(P(h,t))=\src(h)$, $P(h,0)=\one_{\src(h)}$, and $P(h,1)=h$.
\end{lemma}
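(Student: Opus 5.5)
The plan is to build $P$ in three stages: first a smooth path in the single source fiber $\G_x$, $x:=\src(g_0)$; then a smooth deformation of that path to nearby source fibers, using Lemma~\ref{lem:param-reconstruction}; and finally a small correction in a source-adapted chart at the units, so that the endpoint becomes exactly $h$.

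\textbf{Step 1 (a path in $\G_x$).} The fiber $\G_x$ is a Hausdorff Banach manifold, so its connected component containing $\one_x$ is smoothly path connected by piecewise smooth paths. I will take such a path from $\one_x$ to $g_0$. I will reparametrize each smooth piece by a smooth map $[0,1]\to[0,1]$ that is flat (all derivatives zero) at its endpoints. This yields a smooth $c\colon[0,1]\to\G_x$ with $c(0)=\one_x$ and $c(1)=g_0$. I will also arrange a subdivision $0=t_0<\dots<t_N=1$ with two properties: $c$ is stationary near each $t_j$, and $\gamma:=\tgt\circ c$ maps $I_j=[t_{j-1},t_j]$ into the domain $U_j$ of a local trivialization $\psi_j$ of $\LG$. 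Put $\eta:=\drt c$, which is admissible over $\gamma$. By construction $\eta$ vanishes near every $t_j$.

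\textbf{Step 2 (deforming to nearby sources; the main obstacle).} I need a smooth family of admissible paths $\eta(y,\cdot)$ over paths $\gamma(y,\cdot)$ with $\gamma(y,0)=y$, for $y$ near $x$, which reduces to $\eta$ when $y=x$. Banach manifolds need not have bump functions, so I will avoid extending $\eta$ to a global time-dependent section. Instead, on $I_j$ I will use the time-dependent \emph{constant} section $\xi_t:=\psi_j^{-1}(\cdot,\eta_{\psi_j}(t))$, where $\eta_{\psi_j}$ is the $\psi_j$-representative of $\eta$. I will then solve the Banach ODE
\[
\partial_t\gamma(y,t)=\anch\bigl(\xi_t(\gamma(y,t))\bigr)
\]
successively on each $I_j$, with initial value $\gamma(y,0)=y$. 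Because $\eta\equiv0$ near the breakpoints, the right-hand side vanishes there. The choice of trivialization therefore does not matter near $t_j$, and the pieces glue to a smooth map. Smooth dependence on initial data, together with compactness of $[0,1]$, gives existence on all of $[0,1]$ for $y$ in a neighborhood $Y$ of $x$. For $y=x$, uniqueness gives $\gamma(x,\cdot)=\tgt\circ c$.

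Setting $\eta(y,t):=\xi_t(\gamma(y,t))$ gives a smooth admissible family. Lemma~\ref{lem:param-reconstruction}, applied with parameter space $Y$ (a chart neighborhood, after shrinking), gives a smooth $g\colon Y\times[0,1]\to\G$ with $g(y,0)=\one_y$ and $\src(g(y,t))=y$. The uniqueness part of that lemma gives $g(x,\cdot)=c$; in particular $g(x,1)=g_0$.

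\textbf{Step 3 (correcting the endpoint).} Let $\chi\colon U\times V\to W$ be a source-adapted chart at $\one_{\tgt(g_0)}$, as in Remark~\ref{remark:source-adapted-charts-exp-map}, with $V$ a ball. For $h$ near $g_0$, put $y=\src(h)$ and
\[
k(h):=h\,g(y,1)^{-1}.
\]
This element is defined because $\src(h)=y=\src(g(y,1))$, and $\src(k(h))=\tgt(g(y,1))$. It depends smoothly on $h$, and $k(g_0)=\one_{\tgt(g_0)}$. So for $h$ in a small open set $U_g$ we can write $k(h)=\chi(z(h),w(h))$ with $w(h)\in V$.

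Choose smooth $\alpha,\beta\colon[0,1]\to[0,1]$ as follows: $\alpha$ rises from $0$ to $1$ on $[0,\tfrac12]$ and equals $1$ afterwards; $\beta$ equals $0$ on $[0,\tfrac12]$ and rises to $1$ at $t=1$. I will then define
\[
P(h,t):=\chi\bigl(\tgt(g(y,\alpha(t))),\,\beta(t)w(h)\bigr)\cdot g(y,\alpha(t)).
\]
The product is composable because $\src\circ\chi=\pr_1$. For $t\le\tfrac12$ the left factor is the unit $\one_{\tgt(g(y,\alpha(t)))}$. For $t\ge\tfrac12$ the first argument of $\chi$ equals $\tgt(g(y,1))=z(h)$. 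Hence $P(h,0)=\one_{\src(h)}$, $P(h,1)=k(h)\,g(y,1)=h$, and $\src(P(h,t))=\src(h)$ throughout. $P$ is smooth because multiplication is smooth on the split submanifold $\mset{\G}$ and all ingredients are smooth. The point I expect to need the most care is Step 2: gluing the piecewise-constant sections smoothly, and checking that the admissibility hypothesis of Lemma~\ref{lem:param-reconstruction} holds uniformly on a neighborhood of $x$.
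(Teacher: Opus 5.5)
Your proposal is correct, and it takes a genuinely different route from the paper's proof.

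\textbf{The paper's route.} The paper uses no ODEs and no groupoid multiplication. It covers a continuous path from $\one_{x_0}$ to $g_0$ by finitely many source-adapted submersion charts $\kappa_i$ with convex fiber factor. It chooses local sections $\sigma_i$ of $\src$ through the subdivision points, with values in consecutive chart overlaps and with $\sigma_0=\one$. In the fiber coordinate of $\kappa_i$ it interpolates convexly between $\sigma_{i-1}(x)$ and $\sigma_i(x)$, using a cutoff that is constant near the endpoints. A final segment in $\kappa_n$ runs from $\sigma_n(\src(h))$ to $h$, and the pieces are concatenated.

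\textbf{Your route.} You transport one smooth path $c$ in $\G_x$ to nearby source fibers. You flow the base along time-dependent, trivialization-constant sections, lift with Lemma~\ref{lem:param-reconstruction}, and use its uniqueness clause to get $g(x,\cdot)=c$. You then correct the endpoint by right translation and a source-adapted chart at the units.

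\textbf{Comparison.} Your route uses heavier tools: Banach ODE flows with smooth dependence on initial data, the reconstruction lemma, and smoothness of multiplication and inversion. It gains nothing that Theorem~\ref{thm:Lie-II-Banach} needs, since that proof only uses the existence of $P$. The paper's argument uses only the submersion structure of $\src$ and the unit section, so it carries over essentially verbatim to locally convex groupoids. Yours is tied to Banach ODE theory.

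\textbf{A small repair in Step 3.} As written, $\chi\bigl(\tgt(g(y,\alpha(t))),\beta(t)w(h)\bigr)$ requires $\tgt(g(y,\alpha(t)))\in U$. This fails in general for small $t$, because $\tgt\circ g(y,\cdot)$ may leave $U$. Choose $\alpha\equiv 1$ on $[\tfrac12-\varepsilon,1]$ and $\beta\equiv 0$ on $[0,\tfrac12+\varepsilon]$. Set $P(h,t):=g(y,\alpha(t))$ for $t<\tfrac12+\varepsilon$, and $P(h,t):=\chi(z(h),\beta(t)w(h))\,g(y,1)$ for $t>\tfrac12-\varepsilon$. The two expressions agree on the overlap because $\chi(\cdot,0)=\one$, so smoothness of $P$ follows locally.
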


\begin{proof}
Let $x_0=\src(g_0)$. Since Banach manifolds are locally path connected, the
connected component of $\one_{x_0}$ in $\G_{x_0}$ is its path component. Choose
a continuous path $c\colon[0,1]\to\G_{x_0}$ from $\one_{x_0}$ to $g_0$.
We can choose finitely many source-adapted charts
$\kappa_i\colon V_i\to O_i\times W_i$ so that $\src=\pr_1\circ\kappa_i$, where
each $W_i$ is open and convex, and choose a subdivision
$0=t_0<\cdots<t_n=1$ such that $c([t_{i-1},t_i])\subseteq V_i$ so that $c(t_i)\in V_i\cap V_{i+1}$ for
$1\le i\leq n-1$.
Choose smooth local sections $\sigma_i$ of $\src$,
defined near $x_0$, with $\sigma_i(x_0)=c(t_i)$ and image in
$V_i\cap V_{i+1}$. Set $\sigma_0(x)=\one_x$ near $x_0$, and choose a local
section $\sigma_n$ through $g_0$. After shrinking, all $\sigma_i$ are defined
on an open neighborhood $O$ of $x_0$, and
$\sigma_{i-1}(O)\cup\sigma_i(O)\subseteq V_i$ for $i=1,\ldots,n$.
Choose a smooth function $\beta\colon[0,1]\to[0,1]$ with $\beta=0$ near $0$
and $\beta=1$ near $1$. In the chart $\kappa_i$, write
$\kappa_i(\sigma_{i-1}(x))=(x,u_i^-(x))$ and
$\kappa_i(\sigma_i(x))=(x,u_i^+(x))$. Define
$\Sigma_i(x,\tau)=\kappa_i^{-1}(x,(1-\beta(\tau))u_i^-(x)+\beta(\tau)u_i^+(x))$.
After the preceding shrinkings, convexity of $W_i$ makes this well defined.
Then $\src(\Sigma_i(x,\tau))=x$, $\Sigma_i(x,0)=\sigma_{i-1}(x)$, and
$\Sigma_i(x,1)=\sigma_i(x)$. Moreover $\Sigma_i$ is constant in $\tau$ near
$0$ and $1$.
Shrink an open neighborhood $U_g\subseteq V_n$ of $g_0$ so that
$\src(U_g)\subseteq O$. Write $\kappa_n(h)=(\src(h),q(h))$, and write
$\kappa_n(\sigma_n(x))=(x,u_{n+1}^-(x))$. Define
$$
\Sigma_{n+1}(h,\tau)
= \kappa_n^{-1}(\src(h),(1-\beta(\tau))u_{n+1}^-(\src(h))+\beta(\tau)q(h)).
$$
After shrinking $U_g$, convexity keeps the segment in $W_n$. Thus
$\Sigma_{n+1}$ is a smooth path from $\sigma_n(\src(h))$ to $h$, inside the
source fiber over $\src(h)$, and it is constant near the endpoints.
Divide $[0,1]$ into $n+1$ equal sub intervals. On the first $n$ subintervals,
define $P(h,t)$ by $\Sigma_i(\src(h),\tau)$, where $\tau$ is the affine
reparametrization to $[0,1]$. On the final subinterval, define
$P(h,t)=\Sigma_{n+1}(h,\tau)$. Adjacent pieces agree on neighborhoods of the
breakpoints because $\beta$ is constant near $0$ and $1$. Hence $P$ is smooth.
The source and endpoint identities are immediate.
\end{proof}

We now prove the Banach analogue of Lie II theorem for groupoids.

\begin{theorem}[Lie II for Banach-Lie algebroids]\label{thm:Lie-II-Banach}
Let $\G\tto M$ and $\cH\tto N$ be \nnh Banach-Lie groupoids. Assume that $\G$
is source connected and source simply connected. Let
$(\Phi,f)\colon \LG\to\Lie(\cH)$ be a first-order Lie algebroid morphism over
$f\colon M\to N$. Then there exists a unique smooth groupoid morphism
$F\colon \G\to \cH$ over $f$ such that $\Lie(F)=\Phi$.
\end{theorem}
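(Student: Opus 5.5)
The plan is to follow the Crainic--Fernandes path-space strategy, using the reconstruction results already established. For $g\in\G$ with $x=\src(g)$, choose by source connectedness a smooth path $c\colon[0,1]\to\G_x$ from $\one_x$ to $g$. Lemma~\ref{lem:smooth-local-source-paths} supplies such paths depending smoothly on $g$ locally. Set $\eta:=\Phi(\rld c)$. Anchor compatibility, Theorem~\ref{thm:braanchcomp}\,\ref{item:anchorcomp}, together with Lemma~\ref{lem:MC-source-fiber}, shows that $\eta$ is a $\Lie(\cH)$-path admissible over $f\circ\tgt\circ c$. Lemma~\ref{lem:param-reconstruction} then gives its reconstruction $\Delta$ in $\cH_{f(x)}$, and I define $F(g):=\Delta(1)$.

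\textbf{Step 1: well-definedness.} Two smooth paths from $\one_x$ to $g$ in $\G_x$ are homotopic rel endpoints, since $\G_x$ is simply connected. Because Banach manifolds admit smooth approximation of continuous homotopies (locally in charts, relative to endpoints), I can take a smooth homotopy $\Gamma$ with $\Gamma(s,0)=\one_x$ and $\Gamma(s,1)=g$. It may first be necessary to reparametrize both paths so they are constant near the endpoints. Proposition~\ref{prop:homotopy-invariance} then shows that $\Delta_s(1)$ is independent of $s$, so $F(g)$ does not depend on the chosen path.

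\textbf{Step 2: smoothness and basic properties.} Near $g_0$, use the smooth family $P(h,t)$ from Lemma~\ref{lem:smooth-local-source-paths}. The map $(h,t)\mapsto\Phi(\rld_t P(h,t))$ is smooth, and Lemma~\ref{lem:param-reconstruction} with parameter space $U_g$ yields a smooth family of reconstructions; hence $F$ is smooth near $g_0$. The identities $\src(F(g))=f(\src g)$, $\tgt(F(g))=f(\tgt g)$ and $F(\one_x)=\one_{f(x)}$ are immediate, the last by taking the constant path.

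\textbf{Step 3: multiplicativity.} For composable $g,h$ with $x=\src(h)$, $\tgt(h)=\src(g)$, let $c_h$ run from $\one_x$ to $h$ and $c_g$ from $\one_{\tgt h}$ to $g$. The concatenation of $c_h$ with $c_g\cdot h$ is a path in $\G_x$ from $\one_x$ to $gh$. Right translation leaves $\rld$ unchanged, so its logarithmic derivative is the concatenation of $\rld c_h$ and $\rld c_g$. By the uniqueness in Lemma~\ref{lem:param-reconstruction} (used on each piece and by right-translation invariance of $\rld$ in $\cH$), the reconstruction of the concatenated path is $\Delta_h$ followed by $\Delta_g\cdot F(h)$. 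Its endpoint is therefore $F(g)F(h)$, and concatenation points are handled by reparametrizations that are constant near the endpoints.

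\textbf{Step 4: $\Lie(F)=\Phi$ and uniqueness.} Differentiate $F(c(t))=\Delta(t)$, which holds because the restricted path $c|_{[0,t]}$ reconstructs to $\Delta|_{[0,t]}$. At $t=0$, with $\dot c(0)=a\in\LG_x$ arbitrary, this gives $TF(a)=\Phi(a)$; more generally $\rld(F\circ c)=\Phi(\rld c)$. For uniqueness, any morphism $F'$ with $\Lie(F')=\Phi$ satisfies $\rld(F'\circ c)=\Phi(\rld c)$, because $F'$ intertwines right translations. So $F'\circ c$ is the reconstruction of $\Phi(\rld c)$ starting at $\one_{f(x)}$, and the uniqueness part of Lemma~\ref{lem:param-reconstruction}, which takes place inside the Hausdorff fiber $\cH_{f(x)}$, forces $F'(g)=F(g)$. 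Source connectedness ensures this covers all $g$.

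The main obstacle is Step 1: producing a \emph{smooth} homotopy rel endpoints from a merely continuous one in a Banach source fiber. I would first try to subdivide the continuous homotopy into small squares contained in source-adapted charts and smooth it chart-by-chart by convex interpolation, much as in the proof of Lemma~\ref{lem:smooth-local-source-paths}. Alternatively, I would replace it by a piecewise-smooth homotopy and apply Proposition~\ref{prop:homotopy-invariance} to each piece.
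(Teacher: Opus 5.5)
Your proposal is correct, and it follows the paper's proof step for step in most places. The definition of $F$ by reconstruction, well-definedness via Proposition~\ref{prop:homotopy-invariance}, smoothness via Lemma~\ref{lem:smooth-local-source-paths} together with the parametrized Lemma~\ref{lem:param-reconstruction}, multiplicativity by concatenation and right-invariance of $\rld$, and uniqueness via reconstruction uniqueness in the Hausdorff fiber $\cH_{f(x)}$ are all as in the paper.

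The one genuine divergence is the proof of $\Lie(F)=\Phi$. The paper forms the two-parameter reconstruction $\Delta(r,t)$ of $\Phi(\rld\Gamma_r)$, with $\Gamma_r(t)=\chi(x,trv)$. It then applies the structure equation of Lemma~\ref{lem:MC-source-fiber} in a trivialization of $\Lie(\cH)$. At $r=0$ the $C_\Upsilon$-term vanishes, so $\partial_t d_\Upsilon(0,t)=w$, and integrating gives $d(0,1)=\Phi(a)$.

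You instead note that $F\circ c$ equals the reconstruction $\Delta$ of $\Phi(\rld c)$ pointwise. By uniqueness, the rescaled path $\tau\mapsto c(t\tau)$ reconstructs to $\tau\mapsto\Delta(t\tau)$. You then differentiate at $t=0$, where $\omega^R$ is the canonical identification $T_{\one_{f(x)}}(\cH_{f(x)})=\Lie(\cH)_{f(x)}$. This is valid, because by that point $F$ is already known to be well defined and smooth.

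Your route is also shorter. In the paper's notation it says $\Delta(r,t)=\Delta_\sigma(rt)$, where $\Delta_\sigma$ is the reconstruction of $\Phi(\rld\sigma)$. That makes the linear ODE for $d$ unnecessary and gives $\rld(F\circ c)=\Phi(\rld c)$ along every path for free. The paper's computation has the merit of being the infinitesimal form of the argument in Proposition~\ref{prop:homotopy-invariance}, but the theorem does not require it.

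As for the step you flag as the main obstacle, the paper simply cites \cite{KrieglMichor2002Homotopies} for smoothing a continuous homotopy rel endpoints. If you want to avoid that citation, your piecewise fallback works only when the pieces are strips $[s_{i-1},s_i]\times[0,1]$, each smooth. The reason is that Proposition~\ref{prop:homotopy-invariance} needs $\Gamma(s,0)=\one_x$ and $\Gamma(s,1)$ fixed on each piece.
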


\begin{proof}
Let $g\in \G$, and put $x=\src_\G(g)$. Since $\G_x$ is connected and locally
path connected, $g$ lies in the path component of $\one_x$. By
Lemma~\ref{lem:smooth-local-source-paths}, there is a smooth path
$\Gamma\colon[0,1]\to\G_x$ from $\one_x$ to $g$. Put
$a_\Gamma=\drt\Gamma$. Then
$\anch_\G(a_\Gamma)=\partial_t(\tgt_\G\circ\Gamma)$. Hence
$\Phi(a_\Gamma)$ is admissible over $f\circ\tgt_\G\circ\Gamma$.
Let $\Delta_\Gamma$ be the reconstruction in $\cH$, starting at
$\one_{f(x)}$, with $\drt\Delta_\Gamma=\Phi(a_\Gamma)$. Define
$F(g)=\Delta_\Gamma(1)$. Then
$\src_\cH(F(g))=f(\src_\G(g))$ and
$\tgt_\cH(F(g))=f(\tgt_\G(g))$.
This definition is independent of $\Gamma$. Let $\Gamma_0$ and $\Gamma_1$ be
smooth paths in $\G_x$ from $\one_x$ to $g$. Since $\G_x$ is simply connected,
there is a continuous homotopy relative endpoints between them. 
Since continuous homotopies with fixed endpoints can be smoothed, there is a
smooth homotopy $\Gamma\colon[0,1]^2\to\G_x$ with
$\Gamma(0,t)=\Gamma_0(t)$, $\Gamma(1,t)=\Gamma_1(t)$,
$\Gamma(s,0)=\one_x$, and $\Gamma(s,1)=g$
\cite{KrieglMichor2002Homotopies}.
Proposition~\ref{prop:homotopy-invariance}
implies that the endpoints of the reconstructed $\cH$-paths are equal. Hence
$F(g)$ is well defined.
We prove smoothness. Fix $g_0\in\G$. Lemma~\ref{lem:smooth-local-source-paths}
gives an open neighborhood $U_g$ of $g_0$ and a smooth family
$P\colon U_g\times[0,1]\to\G$ with $P(h,0)=\one_{\src(h)}$ and $P(h,1)=h$.
Set $\gamma(h,t)=f(\tgt_\G(P(h,t)))$ and
$\eta(h,t)=\Phi(\drt P(h,t))$. Then
$\anch_\cH(\eta)=\partial_t\gamma$. By Lemma~\ref{lem:param-reconstruction},
after shrinking $U_g$, the reconstructions form a smooth map
$\Delta\colon U_g\times[0,1]\to\cH$. By construction, $F(h)=\Delta(h,1)$ on
$U_g$. Hence $F$ is smooth.
We prove multiplicativity. Let $g_1,g_2\in\G$ be composable with
$\src_\G(g_2)=\tgt_\G(g_1)$. Choose smooth source-fiber paths $\Gamma_1$ from
$\one_{\src_\G(g_1)}$ to $g_1$, and $\Gamma_2$ from
$\one_{\src_\G(g_2)}$ to $g_2$. Choose a smooth map
$\theta\colon[0,1]\to[0,1]$ with $\theta=0$ near $0$ and $\theta=1$ near $1$.
Replacing $\Gamma_i$ by $\Gamma_i\circ\theta$ does not change the value of
$F(g_i)$, by the preceding well-definedness argument. Thus we may assume that
$\Gamma_1$ and $\Gamma_2$ are constant near their endpoints.
Define a smooth path $\Gamma$ from $\one_{\src_\G(g_1)}$ to $g_2g_1$ by
$\Gamma(t)=\Gamma_1(2t)$ for $t\le 1/2$, and
$\Gamma(t)=\Gamma_2(2t-1)g_1$ for $t\ge 1/2$. Let $\Delta_i$ reconstruct
$\Phi(\drt\Gamma_i)$, with initial points $\one_{f(\src_\G(g_i))}$. Then
$\Delta_i(1)=F(g_i)$. Define $\Delta$ by
$\Delta(t)=\Delta_1(2t)$ for $t\le 1/2$, and
$\Delta(t)=\Delta_2(2t-1)\Delta_1(1)$ for $t\ge 1/2$. The paths are smooth
because the pieces are constant near the joining point.
On the first half, $\drt\Delta(t)=2\Phi(\drt\Gamma_1(2t))$. On the second
half, right multiplication by the fixed arrow $\Delta_1(1)$ does not change
the right logarithmic derivative, so
$\drt\Delta(t)=2\Phi(\drt\Gamma_2(2t-1))$. The same statements hold for
$\Gamma$, with $g_1$ in place of $\Delta_1(1)$. Hence
$\drt\Delta=\Phi(\drt\Gamma)$. By uniqueness of reconstruction,
$F(g_2g_1)=\Delta(1)=F(g_2)F(g_1)$. The constant path gives
$F(\one_x)=\one_{f(x)}$. Thus $F$ is a groupoid morphism over $f$.

We show that $\Lie(F)=\Phi$. Let $a\in\LG_x$. Choose a source-adapted chart
$\chi\colon U\times V\to W$ around $\one_x$, with induced trivialization
$\psi\colon \LG|U\to U\times\sF$. Write $\psi(a)=(x,v)$. For small $r$, set
$\sigma(r)=\chi(x,rv)$ and $\Gamma_r(t)=\chi(x,trv)$. Then $\sigma'(0)=a$.
Let $\Delta(r,t)$ be the reconstruction in $\cH$ of
$\Phi(\drt\Gamma_r(t))$, starting at $\one_{f(x)}$. Then
$F(\sigma(r))=\Delta(r,1)$.
Set $\widetilde a=\drt\Delta$ and
$d=\omega_{\cH,f(x)}^R(\partial_r\Delta)$. Choose a local
trivialization $\Upsilon\colon\Lie(\cH)|_{U'}\to U'\times\sE$ around $f(x)$.
After shrinking $r$, write the local representatives as
$\widetilde a_\Upsilon$ and $d_\Upsilon$. Since $\Delta(0,t)=\one_{f(x)}$, we
have $\widetilde a_\Upsilon(0,t)=0$. Since $\Delta(r,0)=\one_{f(x)}$, we have
$d_\Upsilon(0,0)=0$.
The chart calculation in Lemma~\ref{lem:param-reconstruction} gives
$$
\psi(\drt\Gamma_r(t))
= (\tgt_\G(\Gamma_r(t)),B(x,trv)(rv)),
$$
with $B(x,0)=\id$. Therefore
$\left.\partial_r\right|_{r=0}B(x,trv)(rv)=v$. Write
$\Upsilon(\Phi(a))=(f(x),w)$. Since the local representative of $\Phi$ is
smooth and fiberwise linear, its derivative in the base direction at the zero
fiber vector is zero. Differentiating at the zero fiber vector gives
$\partial_r\widetilde a_\Upsilon(0,t)=w$.
Apply Lemma~\ref{lem:MC-source-fiber} to the two-parameter family
$\Delta(r,t)$ in the source fiber $\cH_{f(x)}$. In the trivialization
$\Upsilon$,
$$
\partial_r\widetilde a_\Upsilon-\partial_t d_\Upsilon
= C_\Upsilon(\tgt_\cH\circ\Delta,\widetilde a_\Upsilon,d_\Upsilon).
$$
At $r=0$, the right hand side is zero because
$\widetilde a_\Upsilon(0,t)=0$. Hence
$\partial_t d_\Upsilon(0,t)=\partial_r\widetilde a_\Upsilon(0,t)=w$. Since
$d_\Upsilon(0,0)=0$, we get $d_\Upsilon(0,1)=w$. At the identity, the right
Maurer-Cartan map is the canonical identification
$T_{\one_{f(x)}}(\cH_{f(x)})=\Lie(\cH)_{f(x)}$. Therefore
$$
\Lie(F)(a)
= \left.\frac{d}{dr}\right|_{r=0}F(\sigma(r))
= \left.\frac{d}{dr}\right|_{r=0}\Delta(r,1)
= d(0,1)
= \Phi(a).
$$
Finally, let $F_1,F_2\colon\G\to\cH$ be smooth groupoid morphisms over $f$
with $\Lie(F_1)=\Lie(F_2)=\Phi$. Fix $g\in\G$, and choose a smooth
source-fiber path $\Gamma$ from $\one_{\src_\G(g)}$ to $g$. For a groupoid
morphism $F_i$, differentiating
$F_i\circ R_{\Gamma(t)^{-1}}=R_{F_i(\Gamma(t))^{-1}}\circ F_i$ at
$\Gamma(t)$ gives
$$\drt(F_i\circ\Gamma)=\Lie(F_i)(\drt\Gamma)=\Phi(\drt\Gamma).$$ Thus
$F_1\circ\Gamma$ and $F_2\circ\Gamma$ solve the same reconstruction problem in
the Hausdorff source fiber of $\cH$, with the same initial value. By
uniqueness, they agree. Evaluating at $t=1$ gives $F_1(g)=F_2(g)$. Hence
$F_1=F_2$.
\end{proof}

We now record a direct consequence of the Banach Lie II theorem. By
\cite[Theorem~5.1]{BeltitaGolinskiJakimowiczPelletier2019}, the
source-connected component subgroupoid $\G^0\subseteq \G$ admits a
source-simply connected covering groupoid $\wt\G\to \G^0$.
We write $p_\G\colon\wt\G\to\G$ for the composition
$\wt\G\to\G^0\hookrightarrow\G$. Thus $\wt\G\tto M$ is source connected and
source simply connected, and
$\Lie(p_\G)\colon\Lie(\wt\G)\to\Lie(\G)$ is an isomorphism over $\id_M$.
We use the same notation for $\cH$.

\begin{corollary}\label{cor:functoriality-ssc-cover}
Let $\G\tto M$ and $\cH\tto N$ be Banach-Lie groupoids. Let
$F\colon\G\to\cH$ be a Lie groupoid morphism over $f\colon M\to N$. Then
there exists a unique Lie groupoid morphism $\wt F\colon\wt\G\to\wt\cH$ over
$f$ such that $p_\cH\circ\wt F=F\circ p_\G$.
\end{corollary}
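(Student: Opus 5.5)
The plan is to apply Theorem~\ref{thm:Lie-II-Banach} to the source-simply connected groupoid $\wt\G$ and the target $\wt\cH$, using a lifted algebroid morphism. First, by Corollary~\ref{cor:Liefunctor}, $\Lie(F)\colon\LG\to\Lie(\cH)$ and $\Lie(p_\G)\colon\Lie(\wt\G)\to\LG$ are first-order Lie algebroid morphisms. Since $\Lie(p_\cH)\colon\Lie(\wt\cH)\to\Lie(\cH)$ is an isomorphism over $\id_N$, its inverse $\Lie(p_\cH)^{-1}$ is a smooth vector bundle isomorphism over the identity that satisfies anchor and bracket compatibility. I will deduce this from Theorem~\ref{thm:braanchcomp}: the forward direction gives the compatibilities for $\Lie(p_\cH)$, they transfer to the inverse, and the converse part (the base map $\id_N$ is a diffeomorphism) shows that $\Lie(p_\cH)^{-1}$ is a first-order morphism. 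Alternatively, $\Lie(p_\cH)^\sharp$ is a sheaf isomorphism commuting with the differentials, so its inverse also commutes with them. I then set
\[
\Phi:=\Lie(p_\cH)^{-1}\circ\Lie(F)\circ\Lie(p_\G)\colon \Lie(\wt\G)\to\Lie(\wt\cH),
\]
which lies over $f$. It is a first-order morphism because compositions of such morphisms are morphisms.

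\emph{Existence.} Since $\wt\G$ is source connected and source simply connected, Theorem~\ref{thm:Lie-II-Banach} gives a unique Lie groupoid morphism $\wt F\colon\wt\G\to\wt\cH$ over $f$ with $\Lie(\wt F)=\Phi$. To show $p_\cH\circ\wt F=F\circ p_\G$, I compare both sides as groupoid morphisms $\wt\G\to\cH$ over $f$. By functoriality, $\Lie(p_\cH\circ\wt F)=\Lie(p_\cH)\circ\Phi=\Lie(F)\circ\Lie(p_\G)=\Lie(F\circ p_\G)$. The uniqueness clause of Theorem~\ref{thm:Lie-II-Banach}, applied with source $\wt\G$ and target $\cH$, then forces the two morphisms to be equal. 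If one prefers not to invoke it, the same argument works directly: along a smooth source path $\Gamma$ from a unit, both images have right logarithmic derivative $\Lie(\cdot)(\drt\Gamma)$, and uniqueness in Lemma~\ref{lem:param-reconstruction}, inside Hausdorff source fibers of $\cH$, gives equality.

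\emph{Uniqueness.} Suppose $\wt F'\colon\wt\G\to\wt\cH$ is a groupoid morphism over $f$ with $p_\cH\circ\wt F'=F\circ p_\G$. Applying $\Lie$ gives $\Lie(p_\cH)\circ\Lie(\wt F')=\Lie(F)\circ\Lie(p_\G)$. Since $\Lie(p_\cH)$ is invertible, $\Lie(\wt F')=\Phi$, and the uniqueness in Theorem~\ref{thm:Lie-II-Banach} gives $\wt F'=\wt F$.

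The main obstacle is the step that the inverse of the algebroid isomorphism $\Lie(p_\cH)$ is a first-order morphism. This is exactly where the converse in Theorem~\ref{thm:braanchcomp} for diffeomorphic base maps is needed. One has to check that $(\Lie(p_\cH)^{-1})^\sharp$ is the inverse sheaf map of $\Lie(p_\cH)^\sharp$ when the base map is $\id_N$, which holds because the inverse-image functor is trivial in that case. I also rely on the cited facts about $\wt\G$ (smoothness of $p_\G$ and that $\Lie(p_\G)$ is an isomorphism), which are taken from \cite{BeltitaGolinskiJakimowiczPelletier2019}.
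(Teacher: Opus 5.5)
Your proposal is correct and follows essentially the same route as the paper. You form $\Lie(p_\cH)^{-1}\circ\Lie(F)\circ\Lie(p_\G)$, integrate it by Theorem~\ref{thm:Lie-II-Banach}, and use that theorem's uniqueness clause for the domain $\wt\G$ twice: once to get $p_\cH\circ\wt F=F\circ p_\G$, and once for uniqueness of the lift. You also justify that $\Lie(p_\cH)^{-1}$ is a first-order morphism, either via the converse in Theorem~\ref{thm:braanchcomp} over $\id_N$ or by inverting the sheaf map $\Lie(p_\cH)^\sharp$; this fills in a step the paper only asserts.
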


\begin{proof}
Since $F$ is a Lie groupoid morphism, $\Lie(F)\colon\Lie(\G)\to\Lie(\cH)$ is
a Lie algebroid morphism over $f$. Set
$$
\wt\Phi:=\Lie(p_\cH)^{-1}\circ\Lie(F)\circ\Lie(p_\G)
\colon
\Lie(\wt\G)\to\Lie(\wt\cH).
$$
Then $\wt\Phi$ is a Lie algebroid morphism over $f$. By
Theorem~\ref{thm:Lie-II-Banach}, there exists a unique Lie groupoid morphism
$\wt F\colon\wt\G\to\wt\cH$ over $f$ such that $\Lie(\wt F)=\wt\Phi$.
Both maps $p_\cH\circ\wt F$ and $F\circ p_\G$ are Lie groupoid morphisms
$\wt\G\to\cH$ over $f$. Moreover,
$$
\Lie(p_\cH\circ\wt F)
= \Lie(p_\cH)\circ\Lie(\wt F)
= \Lie(F)\circ\Lie(p_\G)
= \Lie(F\circ p_\G).
$$
Since $\wt\G$ is source connected and source simply connected, uniqueness in
Theorem~\ref{thm:Lie-II-Banach} gives $p_\cH\circ\wt F=F\circ p_\G$.
For uniqueness, let $\wt F_1,\wt F_2\colon\wt\G\to\wt\cH$ be Lie groupoid
morphisms over $f$ with $p_\cH\circ\wt F_1=F\circ p_\G=p_\cH\circ\wt F_2$.
Applying the Lie functor gives
$\Lie(p_\cH)\circ\Lie(\wt F_1)=\Lie(p_\cH)\circ\Lie(\wt F_2)$. Since
$\Lie(p_\cH)$ is an isomorphism, $\Lie(\wt F_1)=\Lie(\wt F_2)$. Again by
uniqueness in Theorem~\ref{thm:Lie-II-Banach}, $\wt F_1=\wt F_2$.
\end{proof}

\begin{remark}\label{rem:categorical-ssc-cover}
Let $\mathsf{LieGrpd}_{\mathrm{Ban}}$ be the category of Banach-Lie groupoids
and Lie groupoid morphisms, and let
$\mathsf{LieGrpd}_{\mathrm{Ban}}^{\mathrm{ssc}}$ be the full subcategory of
source connected and source simply connected Banach-Lie groupoids.
For each Banach-Lie groupoid $\G\tto M$, choose a covering morphism
$p_\G\colon\wt\G\to\G$ as above. By
Corollary~\ref{cor:functoriality-ssc-cover}, every Lie groupoid morphism
$F\colon\G\to\cH$ admits a unique lift $\wt F\colon\wt\G\to\wt\cH$ such that
$p_\cH\circ\wt F=F\circ p_\G$. The identity and composition laws follow from
the uniqueness of these lifts. Hence $\G\mapsto\wt\G$ and $F\mapsto\wt F$
define a functor
$$
\widetilde{(-)}\colon
\mathsf{LieGrpd}_{\mathrm{Ban}}\to
\mathsf{LieGrpd}_{\mathrm{Ban}}^{\mathrm{ssc}}.
$$
Moreover, the family $(p_\G)_\G$ defines a natural transformation
$p\colon\widetilde{(-)}\Rightarrow\id_{\mathsf{LieGrpd}_{\mathrm{Ban}}}$.
If $\G$ is source connected and source simply connected, we may choose
$p_\G=\id_\G$. With this choice, $\widetilde{(-)}$ is right adjoint to the
inclusion
$\iota\colon\mathsf{LieGrpd}_{\mathrm{Ban}}^{\mathrm{ssc}}\hookrightarrow
\mathsf{LieGrpd}_{\mathrm{Ban}}$. Indeed, if $\mathcal K\tto P$ is source
connected and source simply connected and $F\colon\mathcal K\to\G$ is a Lie
groupoid morphism, then Corollary~\ref{cor:functoriality-ssc-cover} gives a
unique morphism $\wt F\colon\mathcal K\to\wt\G$ such that
$p_\G\circ\wt F=F$. Thus composition with $p_\G$ gives a natural bijection
$$
\Hom_{\mathsf{LieGrpd}_{\mathrm{Ban}}^{\mathrm{ssc}}}(\mathcal K,\wt\G)
\cong
\Hom_{\mathsf{LieGrpd}_{\mathrm{Ban}}}(\mathcal K,\G).
$$
\end{remark}

\printbibliography

@article{AbadCrainic2012Representations,
  author  = {Arias Abad, Camilo and Crainic, Marius},
  title   = {Representations up to Homotopy of {Lie} Algebroids},
  journal = {Journal f{\"u}r die reine und angewandte Mathematik},
  volume  = {663},
  pages   = {91--126},
  date    = {2012},
  doi     = {10.1515/crelle.2011.095}
}

@article{AmiriGloecknerSchmeding2020CurrentGroupoids,
  author      = {Amiri, Habib and Gl{\"o}ckner, Helge and Schmeding, Alexander},
  title       = {Lie Groupoids of Mappings Taking Values in a {Lie} Groupoid},
  journal     = {Archivum Mathematicum},
  volume      = {56},
  number      = {5},
  pages       = {307--356},
  date        = {2020},
  doi         = {10.5817/AM2020-5-307},
  eprint      = {1811.02888},
  eprinttype  = {arxiv},
  eprintclass = {math.DG}
}

@article{Anastasiei2011BanachLieAlgebroids,
  author  = {Anastasiei, Mihai},
  title   = {Banach {Lie} Algebroids},
  journal = {Analele {\c{S}}tiin{\c{t}}ifice ale Universit{\u{a}}{\c{t}}ii {``Alexandru Ioan Cuza''} din Ia{\c{s}}i. Matematic{\u{a}}},
  volume  = {57},
  number  = {2},
  pages   = {409--416},
  date    = {2011}
}

@article{BeltitaGolinskiJakimowiczPelletier2019,
  author  = {Belti{\c{t}}{\u{a}}, Daniel and Goli{\'n}ski, Tomasz and Jakimowicz, Grzegorz and Pelletier, Fernand},
  title   = {Banach--{Lie} Groupoids and Generalized Inversion},
  journal = {Journal of Functional Analysis},
  volume  = {276},
  number  = {5},
  pages   = {1528--1574},
  date    = {2019},
  doi     = {10.1016/j.jfa.2018.12.002}
}

@article{CabauPelletier2012AlmostLie,
  author  = {Cabau, Patrick and Pelletier, Fernand},
  title   = {Almost {Lie} Structures on an Anchored {Banach} Bundle},
  journal = {Journal of Geometry and Physics},
  volume  = {62},
  number  = {11},
  pages   = {2147--2169},
  date    = {2012},
  doi     = {10.1016/j.geomphys.2012.06.005}
}

@article{CattaneoDherinWeinstein2013Comorphisms,
  author  = {Cattaneo, Alberto S. and Dherin, Benoit and Weinstein, Alan},
  title   = {Integration of {Lie} Algebroid Comorphisms},
  journal = {Portugaliae Mathematica},
  volume  = {70},
  number  = {2},
  pages   = {113--144},
  date    = {2013},
  doi     = {10.4171/PM/1928}
}

@article{Crainic2003Cohomology,
  author  = {Crainic, Marius},
  title   = {Differentiable and Algebroid Cohomology, {Van Est} Isomorphisms, and Characteristic Classes},
  journal = {Commentarii Mathematici Helvetici},
  volume  = {78},
  pages   = {681--721},
  date    = {2003},
  doi     = {10.1007/s00014-001-0766-9}
}

@article{CrainicFernandes2003Integrability,
  author  = {Crainic, Marius and Fernandes, Rui Loja},
  title   = {Integrability of {Lie} Brackets},
  journal = {Annals of Mathematics},
  volume  = {157},
  number  = {2},
  pages   = {575--620},
  date    = {2003},
  doi     = {10.4007/annals.2003.157.575}
}

@article{EvensLuWeinstein1999Transverse,
  author  = {Evens, Sam and Lu, Jiang-Hua and Weinstein, Alan},
  title   = {Transverse Measures, the Modular Class and a Cohomology Pairing for {Lie} Algebroids},
  journal = {The Quarterly Journal of Mathematics},
  volume  = {50},
  number  = {200},
  pages   = {417--436},
  date    = {1999},
  doi     = {10.1093/qjmath/50.200.417}
}

@article{Fernandes2002Holonomy,
  author  = {Fernandes, Rui Loja},
  title   = {Lie Algebroids, Holonomy and Characteristic Classes},
  journal = {Advances in Mathematics},
  volume  = {170},
  number  = {1},
  pages   = {119--179},
  date    = {2002},
  doi     = {10.1006/aima.2001.2070}
}

@misc{Gloeckner2015Submersions,
  author      = {Gl{\"o}ckner, Helge},
  title       = {Fundamentals of Submersions and Immersions Between Infinite-Dimensional Manifolds},
  date        = {2015},
  eprint      = {1502.05795},
  eprinttype  = {arxiv},
  eprintclass = {math.DG},
  note        = {Preprint}
}

@misc{GloecknerNeeb2026InfiniteDimensional,
  author      = {Gl{\"o}ckner, Helge and Neeb, Karl-Hermann},
  title       = {Infinite-Dimensional {Lie} Groups},
  date        = {2026},
  eprint      = {2602.12362},
  eprinttype  = {arxiv},
  eprintclass = {math.FA},
  note        = {Preliminary book manuscript}
}

@article{HigginsMackenzie1990Algebraic,
  author  = {Higgins, Philip J. and Mackenzie, Kirill C. H.},
  title   = {Algebraic Constructions in the Category of {Lie} Algebroids},
  journal = {Journal of Algebra},
  volume  = {129},
  number  = {1},
  pages   = {194--230},
  date    = {1990},
  doi     = {10.1016/0021-8693(90)90246-K}
}

@incollection{Koszul1985Schouten,
  author    = {Koszul, Jean-Louis},
  title     = {Crochet de {Schouten--Nijenhuis} et cohomologie},
  booktitle = {{\'E}lie Cartan et les math{\'e}matiques d'aujourd'hui -- Lyon, 25--29 juin 1984},
  series    = {Ast{\'e}risque},
  number    = {S131},
  pages     = {257--271},
  publisher = {Soci{\'e}t{\'e} math{\'e}matique de France},
  date      = {1985},
  langid    = {french},
  url       = {https://www.numdam.org/item/AST_1985__S131__257_0/}
}

@article{Kubarski1991ChernWeil,
  author  = {Kubarski, Jan},
  title   = {The {Chern--Weil} Homomorphism of Regular {Lie} Algebroids},
  journal = {Publications du D{\'e}partement de math{\'e}matiques (Lyon)},
  pages   = {1--69},
  date    = {1991},
  url     = {https://www.numdam.org/item/PDML_1991____1_0/}
}

@unpublished{KrieglMichor2002Homotopies,
  author  = {Kriegl, Andreas and Michor, Peter W.},
  title   = {Smooth and Continuous Homotopies into Convenient Manifolds Agree},
  date    = {2002-12-06},
  type    = {Unpublished preprint},
  url     = {https://www.mat.univie.ac.at/~michor/homotopy.pdf},
  urldate = {2026-06-15}
}

@article{Marle2008Calculus,
  author  = {Marle, Charles-Michel},
  title   = {Calculus on {Lie} Algebroids, {Lie} Groupoids and {Poisson} Manifolds},
  journal = {Dissertationes Mathematicae},
  volume  = {457},
  pages   = {1--57},
  date    = {2008},
  doi     = {10.4064/dm457-0-1},
  eprint  = {0806.0919},
  eprinttype  = {arxiv},
  eprintclass = {math.DG}
}

@book{Mackenzie1987LieGroupoids,
  author    = {Mackenzie, Kirill C. H.},
  title     = {Lie Groupoids and Lie Algebroids in Differential Geometry},
  series    = {London Mathematical Society Lecture Note Series},
  number    = {124},
  publisher = {Cambridge University Press},
  location  = {Cambridge},
  date      = {1987},
  doi       = {10.1017/CBO9780511661839}
}

@book{Mackenzie2005GeneralTheory,
  author    = {Mackenzie, Kirill C. H.},
  title     = {General Theory of {Lie} Groupoids and {Lie} Algebroids},
  series    = {London Mathematical Society Lecture Note Series},
  number    = {213},
  publisher = {Cambridge University Press},
  location  = {Cambridge},
  date      = {2005},
  doi       = {10.1017/CBO9781107325883}
}

@book{MoerdijkMrcun2003Foliations,
  author    = {Moerdijk, Ieke and Mr{\v{c}}un, Janez},
  title     = {Introduction to Foliations and {Lie} Groupoids},
  series    = {Cambridge Studies in Advanced Mathematics},
  number    = {91},
  publisher = {Cambridge University Press},
  location  = {Cambridge},
  date      = {2003},
  doi       = {10.1017/CBO9780511615450}
}

@article{Pradines1967Lie,
  author  = {Pradines, Jean},
  title   = {Th{\'e}orie de {Lie} pour les groupo{\"i}des diff{\'e}rentiables. Calcul diff{\'e}rentiel dans la cat{\'e}gorie des groupo{\"i}des infinit{\'e}simaux},
  journal = {Comptes Rendus de l'Acad{\'e}mie des Sciences de Paris, S{\'e}rie A},
  volume  = {264},
  pages   = {245--248},
  date    = {1967},
  langid  = {french}
}

@book{Schmeding2022IntroductionInfiniteDimensional,
  author    = {Schmeding, Alexander},
  title     = {An Introduction to Infinite-Dimensional Differential Geometry},
  series    = {Cambridge Studies in Advanced Mathematics},
  number    = {202},
  publisher = {Cambridge University Press},
  location  = {Cambridge},
  date      = {2022},
  doi       = {10.1017/9781009091251}
}

@article{Vaintrob1997Homological,
  author  = {Vaintrob, A. Yu.},
  title   = {Lie Algebroids and Homological Vector Fields},
  journal = {Russian Mathematical Surveys},
  volume  = {52},
  number  = {2},
  pages   = {428--429},
  date    = {1997},
  doi     = {10.1070/RM1997v052n02ABEH001802}
}

@article{BeltitaGolinskiTumpach2018QueerPoisson,
  author      = {Belti{\c{t}}{\u{a}}, Daniel and Goli{\'n}ski, Tomasz and Tumpach, Alice-Barbara},
  title       = {Queer {Poisson} Brackets},
  journal     = {Journal of Geometry and Physics},
  volume      = {132},
  pages       = {358--362},
  date        = {2018},
  doi         = {10.1016/j.geomphys.2018.06.013},
  eprint      = {1710.03057},
  eprinttype  = {arxiv},
  eprintclass = {math.FA}
}

@misc{GolinskiJakimowicz2025Predual,
  author      = {Goli{\'n}ski, Tomasz and Jakimowicz, Grzegorz},
  title       = {Poisson Structure on Predual of {Banach} {Lie} Algebroid},
  date        = {2025},
  eprint      = {2505.13351},
  eprinttype  = {arxiv},
  eprintclass = {math.DG},
  note        = {Preprint}
}

\end{document}